\theoremstyle{plain}
\newcommand{\refnewtheoremn}[4]{%
\newaliascnt{#1}{#2}
\newtheorem{#1}[#1]{#3}
\aliascntresetthe{#1}
\expandafter\providecommand\csname #1autorefname\endcsname{#4}}
\newcommand{\refnewtheorem}[3]{\refnewtheoremn{#1}{#2}{#3}{#3}}
\def\makeCal#1{%
\expandafter\newcommand\csname c#1\endcsname{\mathcal{#1}}}
\def\makeBB#1{%
\expandafter\newcommand\csname b#1\endcsname{\mathbb{#1}}}
\def\makeFrak#1{%
\expandafter\newcommand\csname f#1\endcsname{\mathfrak{#1}}}
\def\makeScr#1{%
\expandafter\newcommand\csname s#1\endcsname{\mathscr{#1}}}
\edef\y{\@Alph\count@}%
\newtheorem{thm}{Theorem}[section]
\theoremstyle{definition}
\newcommand{\op}[1]{\!\!\mathop{\rm ~#1}\nolimits}
\newcommand{\dual}{\vee}
\newcommand{\bung}{\cM}
\newcommand{\Sym}{\op{Sym}}
\newcommand{\iSpec}{\underline{\op{Spec}}}
\newcommand{\Perf}{\op{Perf}}
\newcommand{\Pic}{\op{Pic}}
\newcommand{\Cone}{\op{Cone}}
\renewcommand{\ss}{{\rm{ss}}}
\newcommand{\rank}{\op{rank}}
\newcommand{\QCoh}{\op{QCoh}}
\newcommand{\GL}{\op{GL}}
\newcommand{\Higgs}{\cH}
\newcommand{\Coh}{\op{Coh}}
\newcommand{\fg}{\mathfrak{g}}
\newcommand{\fz}{\mathfrak{z}}
\newcommand{\ft}{\mathfrak{t}}
\newcommand{\tw}[1]{(#1)}
\newcommand{\ttype}{\gamma}
\newcommand{\Hom}{\op{Hom}}
\newcommand{\iMap}{\cM ap}
\newcommand{\thickslash}{\mathbin{\!\!\pmb{\fatslash}}}
\newcommand{\hWt}{\op{highestWt}}
\newcommand{\frg}{\mathfrak{g}}
\newcommand{\bfX}{\mathbf{X}}
\begin{document}

\title{The equivariant Verlinde formula on the moduli of Higgs bundles}
\author[D. Halpern-Leistner]
{Daniel Halpern-Leistner \\ {\tiny \textit{with an Appendix by Constantin Teleman}} \\}

\begin{abstract}
We prove an analog of the Verlinde formula on the moduli space of semistable meromorphic $G$-Higgs bundles over a smooth curve for a reductive group $G$ whose fundamental group is free. The formula expresses the graded dimension of the space of sections of a positive line bundle as a finite sum whose terms are indexed by formal solutions of a generalized Bethe ansatz equation on the maximal torus of $G$.
\end{abstract}

\maketitle


Consider a smooth algebraic curve $\Sigma$ over $\bC$, a semisimple group $G$,\footnote{In the body of the paper, $G$ will denote an arbitrary reductive group such that $\pi_1(G)$ is free.} and the moduli space $M_G$ of semistable principal $G$-bundles over $\Sigma$. The Verlinde formula, conjectured in \cite{verlinde1988fusion} and proven in \cite{faltings1994proof,beauville1994conformal,kumar1994infinite} expresses the dimension of the space of ``generalized $\Theta$ functions,'' sections of powers of a determinant line bundle on $M_G$. This formula was studied intensely due to its relationship with conformal field theory (see for instance \cite{beauville1996conformal}), and the counter-intuitive expression for the dimension which emerges -- a priori it is not even obviously an integer.

We prove a version of this formula for the moduli space $H_G$ of semistable Higgs bundles, although the most natural statements are in terms of the stack of semistable Higgs bundles $\Higgs^{\ss}_G$. Because $\Higgs^\ss_G$ and $H_G$ are not compact, the space of sections of our line bundles will be infinite dimensional. In order to get a meaningful answer we use the action of $\bG_m$ on $\Higgs^\ss_G$ which scales the Higgs field. The line bundles $\cL$ we study will be equivariant with respect to this $\bG_m$ action, so their global sections $H^0(\Higgs_{G}^\ss,\cL)$ will have a canonical $\bG_m$-action. Our final formula expresses the \emph{graded dimension}
\[
\dim_{\bC^\ast} H^0(\Higgs_{G}^\ss,\cL) := \sum t^n \dim ( H^0(\Higgs_{G}^\ss,\cL)_{\text{weight }n})
\]

For simplicity, let us consider the case where $G = SL_2$. We denote the moduli stack of Higgs bundles $\Higgs := T^\ast \cM_G$, where $\cM_G$ denotes the stack of principal $G$-bundles, and let $\Higgs^\ss \subset \Higgs$ denote the open substack of semistable Higgs bundles. Let $\cL = \cO(h)$, where $\cO(1)$ is the positive generator of $\Pic(\cM_G)$ restricted to $\Higgs$. Our main result in this case states that $H^i(\cH^{\ss},\cL) = 0$ for $i>0$ and $\dim_{\bC^\ast} H^0(\cH^{\ss},\cL)$ is given by
\begin{equation} \label{eqn:ex_SL2}
\left( \frac{h+2}{2(1-t)}\right)^{g-1} \sum_{k=1}^{l+1}  (\sin \phi_k)^{2-2g} \left[ \frac{1}{(1-t)^2 + 4t\sin ^2 \phi_k} \cdot \left(1+ \frac{4t}{h+2} \frac{(1-t) - 2 \sin^2 \phi_k}{(1-t)^2 + 4t \sin^2 \phi_k} \right) \right]^{g-1},
\end{equation}
where $\phi_k = \frac{\pi k}{h+2} + t \phi_k^{(1)} + t^2 \phi_k^{(2)} + \cdots$ is the unique formal power series satisfying
\[
e^{-i 2(h+2) \phi_k} \left( \frac{1-t e^{2 i \phi_k}}{1-te^{-2i\phi_k}} \right)^2 = 1.
\]

We shall prove the formula in three steps:
\begin{enumerate}
\item We show that for all $n$ and $i$ the weight $n$ piece of $H^i(\cH,\cL)$ is finite dimensional and we compute the generating function for the Euler characteristic of $R\Gamma(\cH,\cL)_{\text{weight }n}$, referred to as the graded Euler characteristic $\chi_{\bC^\ast}(\cH,\cL)$ -- See \autoref{prop:index_formula}.
\item We use a version of the ``quantization commutes with reduction" theorem to show that $R\Gamma(\cH,\cL) \simeq R\Gamma(\cH^{ss},\cL)$, and in particular restricting to the open substack of semistable Higgs bundles does not change the graded Euler characteristic - See \autoref{thm:main_index}.
\item We combine a vanishing theorem for higher cohomology $H^i(\cH,\cL)$ for $i>0$ on the stack itself (\autoref{appendix:vanishing}) with the quantization theorem to conclude that the graded index of $R\Gamma(\cH^{\ss},\cL)$ is actually the graded dimension of $H^0(\cH^{\ss},\cL)$ -- See \autoref{thm:main_verlinde}.
\end{enumerate}

Step (1) is not particularly new: The universal deformation of the Verlinde algebra was conjectured in \cite{teleman2004ktheory}, which led to certain index formulas for $K$-theory classes on the stack $\cM_G$, proved in \cite{teleman2009index}. In fact, the generating functions computing the index of $\chi_{\bC^\ast}(\cH,\cL)$ amount to a special case of the generating functions computed in that paper, where we re-interpret the formal parameter $t$ as a geometric parameter coming from the weights of a $\bC^\ast$-action. Our computation in (1) is a straightforward modification of the computations of \cite{teleman2009index}.

The aspect of our methods which are new crucially involve the methods of derived algebraic geometry, and serve as a relatively straightforward sample application for the more general theory of derived $\Theta$-stratifications developed in \cite{halpern2015remarks} and \cite{halpern2016derived} (See also the survey \cite{halpern2016theta}). Specifically, the main structure theorem of \cite{halpern2015remarks} allows one to compare the cohomology of perfect complexes on $\Higgs$ and $\Higgs^\ss$ in steps (2) and (3). The quantization theorem of \cite[Lemma 2.9]{halpern2015remarks} is a generalization of the quantization theorem of \cite{teleman2000quantization} for smooth stacks with $\Theta$-stratification (referred to as a KN-stratification in that case), with the introduction of derived algebraic geometry to deal with the fact that the Harder-Narasimhan strata need not be regularly embedded when the stack is singular.

The remaining input, described in the appendix \autoref{appendix:vanishing}, is the vanishing of $H^{>0}(\cM_G,\cL \otimes \Sym^r(T_{\cM_G}))$ where $T_{\cM_G}$ is the tangent complex of $\cM_G$ and $\cL$ a line bundle having positive level. The proof, which has been known to Teleman for some time, generalizes and simplifies the vanishing theorem proved in \cite{frenkel2013geometric}, and builds on the work of \cites{teleman1998borel,fishel2008strong}.

This work was also inspired by recent results and conjectures in the physics literature: In \cite{gukov2015equivariant}, Gukov and Pei study a 3-dimensional TQFT defined on Seifert manifolds called $\beta$-deformed $G$ complex Chern-Simons theory, and they identify the partition function of this theory on $\Sigma \times S^1$ for the compact group $G_c = U(n)$ with the partition function of a certain ``equivariant $G_c/G_c$ gauged WZW model" \cite[Section 6.1]{gukov2015equivariant}. They conjecture that their partition function with $R$-charge assignment $R=2$ computes the graded dimension of $H^0(\Higgs^{\ss},\cL)$. We formulate the problem in mathematical terms, and prove their conjecture (Although our final answer differs from theirs by an overall factor of $1-t$, See \autoref{ex:gukov_pei}).

\subsection{What's in this paper?}

We prove our main result, the equivariant Verlinde formula in \autoref{thm:main_verlinde}, for the stack of semistable $L$-valued $G$-Higgs bundles on a smooth curve $\Sigma$, where $G$ is any reductive group such that $\pi_1(G)$ is free and $L$ is either $K$ (in genus $g>1$) or any invertible sheaf such that $\deg(L) > \max(0,2g-2)$. We also obtain an equivariant Verlinde \emph{index} formula in \autoref{thm:main_index} which holds for more general $L$. Our set up includes as a special case $L = K(D)$ for an effective divisor $D$, which is sometimes referred to as ``meromorphic Higgs bundles with poles along $D$" \cite{markman1994spectral}, as well as the case $L = (\sqrt{K})^R$, which corresponds to the partition function of the the equivariant $G_c/G_c$-gauged WZW model with arbitrary $R$-charge assignment of the adjoint chiral multiplet as studied in \cite{gukov2015equivariant}.

For any $L$ the stack $\Higgs$ has components corresponding to the topological type of the underlying $G$ bundle, and we have been careful to isolate the contributions coming from each of these connected components. The value of $\dim_{\bC^\ast} H^0(\Higgs^\ss,\cL)$ will only have a contribution from the component corresponding to topologically trivial $G$-bundles, so our equivariant Verlinde formula expresses the value of $\dim_{\bC^\ast} H^0(\Higgs^\ss,\cL(\mu))$, where $\mu$ is a character of $G$ and $\cL(\mu)$ (See \autoref{eqn:special_bundle}) is a twist of $\cL$.  $\cL(\mu)$ only has sections on component\footnote{It is not clear a priori that each connected component of $\Higgs$ is irreducible, and therefore that the projection $\Higgs^\ss \to \cM_G$ induces a bijection on connected components. In the case where $L=K$ and $g>1$ it follows from the results of \cite{beilinson1991quantization}, and for $\deg(L) > \max(0,2g-2)$ it was recently shown in \cite[Proposition 3.2]{arinkin2016irreducible}.} of $\Higgs^\ss$ corresponding to Higgs bundles of a single topological type, determined by $\mu$, and in fact $\cL(\mu)$ descends fractionally to an ample bundle on the good moduli space of that component. Finally, we give some more concrete geometric interpretations, in terms of good moduli spaces and framed bundles, in \S \ref{sect:good_moduli} and \S \ref{sect:framed} respectively.

\subsection{Author's note}

The mark of Constantin Teleman's perspective is indelible throughout this paper, and I would like to thank him for suggesting this problem as a thesis project, and for many helpful conversations over the years. I have also benefited from many helpful conversations with Christopher Woodward, Tony Pantev, and Dima Arinkin on the details of this work. In addition, I have appreciated the support and interest of several of the faculty members at Columbia, including Johan De Jong, Michael Thaddeus, Daniel Litt, Davesh Maulik, and Andrei Okounkov. This research was supported by Columbia University.

While completing this paper I attended a conference\footnote{The lecture was ``Geometric Quantisation of Higgs Moduli Spaces", at the conference ``New Perspectives on Higgs Bundles, Branes, and Quantization," June 13-17, at the Simons Center.} at which J{\o}rgen Ellegaard Andersen announced a proof, joint with Sergei Gukov and Du Pei, of our main theorem. After speaking a bit about the contents of our papers, we decided to complete our projects independently. Ultimately there are some similarities and differences: they go into a lot more depth than our \autoref{rem:parabolic} in the case of parabolic bundles, they treat the case of low genus in detail, and they use their results to construct a 2D TQFT which encodes these index formulas. On the other hand, they only treat simply connected semisimple $G$, and they focus on the line bundles $L = K^{R/2}$. More significantly, they rely on a codimension argument to obtain their dimension formulas on the semistable locus, and thus their results do not include the vanishing of higher cohomology on the semistable locus. I would like to thank them for being so cordial and collegial in coordinating the simultaneous release of our final papers.

\section{Background}

As our results build on the index formulas of \cite{teleman2009index}, we will mostly adopt the notation from that paper. $G$ will denote a (connected) complex reductive group, and we fix a complex maximal torus and Borel subgroup $T \subset B \subset G$. For a finitely generated abelian group $A$, we let $A_\bQ := A \otimes \bQ$, $A_\bC := A \otimes \bC$, and denote the subgroup $A\tw{n} := (2\pi i)^n A \subset A_\bC$. Analytically, we identify $T = N_\bC / N\tw{1}$, where $N$ is the cocharacter lattice of $T$, which we canonically identify with $\pi_1(T)$ as well. In particular for $\xi \in N$, $z \mapsto z^\xi = e^{log(z) \xi}$ is a homomorphism $\bC^\ast \to T$. We let $M$ denote the $\bZ$-dual of $N$, so that $M = \Hom(N\tw{1},\bZ\tw{1}) = \Hom(T,\bC/\bZ(1))$ is the character lattice of $T$, and $T^\dual = M_\bC / M(1)$ is the dual torus. In particular $e^\alpha : T \to \bC^\ast$ is a homomorphism.

\subsubsection{The stack of Higgs bundles}

Fix an invertible sheaf $L$ on $\Sigma$. We denote $\pi : \Sigma \times \cM_G \to \cM_G$ be the projection, $E : \Sigma \times \cM_G \to BG$ the universal bundle, $\fz \subset \fg$ the center, and $\fg' := [\fg,\fg]$. Then we define the perfect complex on $\cM_G$
\begin{equation}\label{eqn:basic_complex}
\cP_L = R\pi_\ast(L^\dual \otimes K \otimes E^\ast(\fg')[1]) \oplus (\cO_\cM \otimes H^0(L)^\dual \otimes \fz).
\end{equation}
Note that $\cP_L$ is connective, so we may regard $\Sym(\cP_L)$ as a connective sheaf of commutative differential graded algebras (CDGA's) on $\cM_G \times B \bG_m$, where $\Sym^k$ has weight $k$ with respect to the $\bG_m$-action. We define the reduced stack of \emph{$L$-valued Higgs bundles}
\[
\Higgs_{G,L} = \iSpec_{\cM_G} (\Sym(\cP_L)).
\]
This is an algebraic derived stack, i.e. a sheaf over the $\infty$-category of connective CDGA's with its \'etale topology.

$\Higgs_{G,L}$ is quasi-smooth as a derived stack, meaning its cotangent complex is concentrated in degree $[-1,0,1]$ (see \S \ref{sect:Euler_complex} below). The underlying classical stack $\Higgs_{G,L}^{\rm{cl}} \hookrightarrow \Higgs_{G,L}$ is $\iSpec$ of $H^0$ of this sheaf of CDGA's, which is typically singular. The two coincide, on any given connected component, when the inequality which holds a priori,
\begin{equation}\label{eqn:classical_condition}
\dim \Higgs_{G,L}^{\rm{cl}} \geq \op{vdim} \Higgs_{G,L} = \dim(\fg) \deg(L) + \dim(\fz) h^1(L),
\end{equation}
is actually an equality. If $S$ is a classical scheme, then using relative Serre duality one can show that a map $S \to \Higgs_{G,L}$, which factors canonically through $\Higgs_{G,L}^{\rm{cl}}$, classifies a $G$-bundle $E : S \times \Sigma \to BG$ along with a section of $(\pi_S)_\ast(L \otimes E^\ast(\fg^\dual))$ (a Higgs field).

\begin{rem} \label{rem:reducedness}
This generalizes the more commonly studied notion of Higgs bundles, which correspond to $L = K$. When $G$ is semisimple $\Higgs_{G,K} = T^\ast \cM_G$, and for an effective divisor $D$, $\Higgs_{G,K(D)}$ is the stack of meromorphic Higgs bundles with poles along $D$ \cites{markman1994spectral}. The naive generalization for reductive $G$ would be $\iSpec(\Sym(R\pi_\ast(L^\dual \otimes K \otimes E^\ast(\fg)[1])))$. This has the issue that the presence of a nontrivial center $\fz \subset \fg$ introduces a trivial summand of $\cP_L$ in homological degree $1$ whenever $h^1(L)>0$. This will cause $\Higgs_{G,L}$ to have a non-trivial derived structure and complicate the ultimate interpretation of our index formula (see \autoref{sect:verlinde_final} for further discussion). Our definition of the ``reduced" stack of $L$-valued $G$-Higgs bundles $\Higgs_{G,L}$ above removes this extra derived structure without changing the underlying classical stack.
\end{rem}

For most of the paper we will work with a derived stack $\cX_{G,L}$ which is the quotient of $\Higgs_{G,L}$ by the action of $\bG_m$ which scales the Higgs field. To be precise, we regard $\Sym(\cP_L)$ as a sheaf of CDGA's on $\cM_G \times B\bG_m$ by letting $\bG_m$ act on $\Sym^k(\cP_L)$ with weight $k$. Then we define
\[
\cX_{G,L} := \iSpec_{\cM_G \times B\bG_m} (\Sym(\cP_L)).
\]
Maps $S \to \cX_{G,L}$ classify invertible sheaves $Q$ on $S$ along with a section of $(\pi_S)_\ast(L \otimes E^\ast(\fg^\dual)) \otimes Q$. When the dependence on $G$ and $L$ is understood, we will drop them from the notation $\cX_{G,L}$, $\Higgs_{G,L}$, and $\cP$.

\subsubsection{Tautological classes}

Consider the projection map $p : \cX_{G,L} \to \cM_G \times B\bC^\ast$. The corresponding pullback map $p^\ast : \Pic(\bung \times B\bC^\ast) \to \Pic(\cX_{G,L})$ is an equivalence, as is the pullback map in cohomology, and topological $K$-theory.\footnote{By this we mean the cohomology and $K$-theory of the topological stack underlying the analytification of the underlying classical stack.} For any of these claims it suffices express $\cM_G$ as an ascending union of open substacks which are global quotients, and over which $\cX_{G,L}$ is the quotient of an affine cone by a scaling action.\footnote{If one defines the algebraic $K$-theory of a locally finite type stack to be the homotopy limit of $K(\Perf(\cU))$ for larger and larger finite type open substacks $\cU$, then $p^\ast$ induces an equivalence in algebraic $K$-theory as well.}

We will consider several tautological perfect complexes on $\cX_{G,L}$: Let $\pi : \Sigma \times \cX_{G,L} \to \cX_{G,L}$ denote the projection, and let $E : \Sigma \times \cX_{G,L} \to BG$ be the tautological $G$ bundle. For a representation $V$ of $G$, we consider $E_x^\ast V := R\pi_\ast(\cO_x \otimes E^\ast(V))$ for a fixed point $x \in \Sigma$, $E_\Sigma^\ast V := R\pi_\ast(\sqrt{K} \otimes E^\ast(V))$, and $D_\Sigma(V) := \det^{-1}(E_\Sigma^\ast(V))$. These are $p^\ast$ of the complexes on $\cM_G$ referred to as ``even Atiyah-Bott classes" in \cite{teleman2009index}, regarded as complexes on $\cM_G \times B\bC^\ast$ with $\bC^\ast$ acting trivially.

We regard an element $h \in H^4(BG;\bQ)$ as an invariant quadratic form on $\fg$, and we say that it is positive if it is positive definite on $N(1)$ (or equivalently, negative definite on $N$). We say that $\cL \in \Pic(\cX_{G,L})$ has \emph{level} $h$ if $c_1(\cL)$ lies in the image of the transgression homomorphism $\tau : H^4(BG;\bQ) \to H^2(\cX_{G,L};\bQ)$, or equivalently if the corresponding invertible sheaf on $\cM_G \times B\bC^\ast$ has level $h$ (after forgetting the action of $B\bC^\ast$) in the sense of \cite{teleman2009index}. The invertible sheaves with positive level are those for which $c_1(L)$ lies in the $\bQ_+$ span of $c_1(D_\Sigma V)$, where $V$ ranges over $\fg$-faithful representations of $G$. If $\cL \in \Pic(\cX_{G,L})$ has a level, the invertible sheaf
\begin{equation}\label{eqn:special_bundle}
\cL(\mu) := \cL \otimes E_x^\ast(\bC_{-\mu}) \in \Pic(\cX)
\end{equation}
features prominently in the sequel.

\subsubsection{Topologial classification of $G$-Higgs bundles}

Because $G$ is connected, the topological type of a principal $G$ bundle on $\Sigma$ is classified by its Chern class in $H^2(\Sigma, \pi_1(G)) \simeq \pi_1(G)$. Let $C \subset Z(G) \subset G$ denote the identity component of the center. Then $\pi_1(C) \to \pi_1(G)$ is a free subgroup of finite index, inducing an equivalence $\pi_1(C)_\bQ \simeq \pi_1(G)_\bQ$.\footnote{Another way to see this equivalence is to consider the short exact sequence of Lie groups $1 \to [G,G] \to G \to G^{\rm{ab}} \to 1$. The resulting short exact sequence $1 \to \pi_1([G,G]) \to \pi_1(G) \to \pi_1(G^{\rm{ab}}) \to 1$ coincides with the quotient of $\pi_1(G)$ by is torsion subgroup. In other words $\pi_1(G)_\bQ \to \pi_1(G^{\rm{ab}})_\bQ$ is an equivalence, so the claim follows from the fact that $C \to G^{\rm{ab}}$ is an isogeny of tori.} Therefore we identify the torsion free quotient $\pi_1(G)_{\rm{tf}}$ with a full-rank lattice of $\pi_1(C)_\bQ$. The inclusion $\pi_1(C) \hookrightarrow \pi_1(T) \simeq N$ gives a canonical embedding $\pi_1(G)_\bQ \hookrightarrow N_\bQ$ which splits the canonical projection $N_\bQ \simeq \pi_1(T)_\bQ \to \pi_1(G)_\bQ$. Alternatively, this splitting can be identified with the canonical decomposition as a $W$-representation $N_\bQ = N_\bQ^W \oplus N_\bQ'$ into trivial and nontrivial pieces. Under this identification $\pi_1(G)_\bQ = N_\bQ^W \subset N$ and $N_\bQ' = \ker(N_\bQ = \pi_1(T)_\bQ \to \pi_1(G)_\bQ)$.

Thus we associate to any principal $G$-bundle an element of $N_\bQ$, which completely determines the topological type of the $G$-bundle if $\pi_1(G)$ is torsion free. In this case we label the connected components of $\cM_{G,\ttype}$ for $\gamma \in \pi_1(G) \subset N_\bQ$. The projection $p : \cX_{G,L} \to \cM_G$ induces a bijection on connected components, so we label them $\cX_{G,L,\ttype}$ as well (again omitting $G$ and $L$ from the notation when they are implied by context).

\subsubsection{Gerbes and rigidification}

The subgroup $C$ is central in $G$, so the canonical action by right multiplication on any $S$-family of $G$-bundles $E$ is actually an action by automorphisms of $G$-bundles. This defines a canonical sub group sheaf $S \times C \hookrightarrow \underline{\op{Aut}}_{\cM}(E)$ over $S$, and it amounts to an embedding of the constant relative group scheme
\[
\cM \times C \hookrightarrow I_{\cM}.
\]
By a general construction (See, for instance \cite[Appendix A]{abramovich2008tame}) one has a ``rigidification," an algebraic stack $\cM \thickslash C$ with a map $\cM \to \cM \thickslash C$ which is a relative gerbe for $C$. Furthermore for any $S$-family of $G$-bundles $E$ as above we have the canonical exact sequence of group schemes over $S$,
\[
\{1\} \to S \times C \to \underline{\op{Aut}}_{\cM}(E) \to \underline{\op{Aut}}_{\cM \thickslash C}(E) \to \{1\},
\]
so points of $\cM \thickslash C$ no longer have canonical positive dimensional subgroups of their automorphism groups. The procedure of rigidification commutes with the formation of good moduli spaces, so the map $\cM_G^{\ss} \to M_G$ factors through $\cM_G^{\ss} \to \cM_G^{\ss} \thickslash C$ and is a good moduli space for the latter.

Any perfect complex on $\bung$ canonically decomposes into a direct sum of perfect complexes of constant weight $\mu \in \widehat{C}$ with respect to the central action of $C$. For any $G$-representation $V$, let $V = \bigoplus_{\mu \in \widehat{C}} V_\mu$ denote its splitting into isotypical pieces under the action of $C$. If $V = V_\mu$ for some $\mu$, then $E_x^\ast V$, $E_\Sigma^\ast V$, and $R\pi_\ast(L\otimes E^\ast V)$ for any $L \in \Pic(\Sigma)$ are concentrated in $C$-weight $\mu$ at every $S$-point of $\bung$. This implies that for any $S$-family of $G$-bundles $E$, the induced $C$-action on the sheaves $(\pi_S)_\ast(L \otimes E^\ast \fg^\dual)$ and $(\pi_S)_\ast(L \otimes E^\ast \fg^\dual) \otimes Q$ is trivial, so we have embeddings $\Higgs \times C \hookrightarrow I_{\Higgs}$ and $\cX \times C \hookrightarrow I_{\cX}$ just as with $\cM$.

\begin{rem} \label{rem:cotangent_stack}
This gives another perspective on the derived moduli of reduced Higgs bundles in the case $L = K$. The complex $\cP_K$ has $C$ weight $0$ and thus descends uniquely to $\cM_G \thickslash C$.\footnote{This can be checked \'etale locally over $\cM_G\thickslash C$, so we may reduce this to the case of a trivial gerbe, which is easy to check directly.} Regarding $\cP_G$ as a complex on $\cM_G \thickslash C$ one has $\cP_K \simeq (L_{\cM\thickslash C})^\dual$ -- removing the action of $C$ has the effect of removing the trivial summand $\cO \otimes \fz[1]$ from $L_{\cM_G}^\dual$. It follows that $\Higgs^{\op{cl}}_{G,K} \thickslash C \simeq T^\ast (\cM_G \thickslash C)^{\op{cl}}$, and likewise $\cX^{\op{cl}}_{G,K} \thickslash C \simeq T^\ast(\cM_G \thickslash C)^{\op{cl}} / \bG_m$ with the right hand side interpreted as above. We are not aware of a reference for the rigidification procedure in the context of derived stacks, but observe that $\Higgs_{G,K} \simeq \cM_G \times_{\cM_G \thickslash C} T^\ast(\cM_G \thickslash C)$ as a derived stack, so the map $\Higgs_{G,K} \to T^\ast(\cM_G \thickslash C)$ is still a $C$-gerbe in the derived setting, and likewise for $\cX_{G,K}$. We can thus regard $\Higgs_{G,K} = T^\ast(\cM_G \thickslash C)$ heuristically (pending a formal treatment of rigidification in derived algebraic geometry).
\end{rem}

\subsubsection{Harder-Narasimhan stratification}

The stack $\Higgs$ has a stratification whose open stratum $\Higgs^{\ss} \subset \Higgs$ classifies semistable $L$-valued Higgs bundles. $\Higgs^{\rm{cl},\ss}$ admits a good quotient $\Higgs^{\rm{cl},\ss} \to \op{H}$ whose connected components are projective-over-affine. See \cite{simpson1994modulib} for a construction with $L = K$ and the components of $\cX$ with trivial rational Chern classes, and \cite{nitsure1991moduli, simpson1994modulia} for a construction with general $L$ and $G=\GL_n$.

The complement of $\Higgs_{G,L}^{\ss}$, the locus of unstable bundles, is the union of locally closed strata classifying Higgs bundles of fixed Harder-Narasimhan type. More precisely, any unstable Higgs bundle $(E,\phi)$ admits a canonical reduction of structure group $(E',\phi')$ to a parabolic $P \subset G$, called the Harder-Narasimhan reduction \cite{dey2005harder}. The topological type of $E'$ is determined by an element of $\pi_1(P) = \pi_1(P/U)$, where $U$ is the unipotent radical, which we can identify as above with an element $\xi \in N_\bQ$, which will be dominant. Then $P = P_\xi$ will be the standard parabolic subgroup defined by $\xi$, and we denote $G_\xi \subset G$ the corresponding Levi subgroup. Thus we denote the (set theoretic) stratification
\begin{equation} \label{eqn:HN_stratification}
\Higgs_{G,L} = \Higgs_{G,L}^{\ss} \cup \bigcup_{\xi} \cS_{\xi}^{\Higgs},
\end{equation}
where the locally closed strata $\cS^{\Higgs}_\xi$ are indexed by those dominant $\xi \in N_\bQ$ which lie in the subgroup $\pi_1(P_\xi) \subset N_\bQ$. The stratification of $\Higgs_{G,L}$ is $\bC^\ast$-invariant, and so descends to a stratification $\cX_{G,L} = \cX_{G,L}^{\ss} \cup \bigcup_\xi \cS_\xi$.

\subsubsection{Harder-Narasimhan stratification as a $\Theta$-stratification}

The Harder-Narasimhan stratification of $\Higgs_{G,L}$ and $\cX_{G,L}$ are $\Theta$-stratifications in the sense of \cite{halpern2014structure}, meaning that the strata admit modular interpretations as open substacks of the mapping stack $\iMap(\Theta,\Higgs_{G,L})$ and $\iMap(\Theta,\cX_{G,L})$ respectively, with $\Theta := \bA^1/\bG_m$. This observation, along with the algebraicity of this mapping stack in the derived context established in \cite{halpern2014mapping}, is crucial to the results of this paper, in that it equips $S_\xi$ canonically with the structure of a locally closed derived subscheme, which need not be classical even when $\Higgs$ is.

Let us describe the mapping stack $\iMap(\Theta,\cX_{G,L})$ explicitly: Any invertible sheaf on $\Theta$ must be of the form $\cO_\Theta \langle k \rangle$ for some $k \in \bZ$, and a $G$-bundle $\Theta \times \Sigma$ corresponds to a $\xi \in N$ and principal $P_\xi$-bundle $E$ for the standard parabolic $P_\xi \subset G$ (see \cite{halpern2014structure}). We can consider the weighted descending filtration of $\fg$ where $\fg_{\xi \geq p} \subset \fg$ is the subspace spanned by eigenvectors of $\xi$ of eigenvalue $\geq p$. This is a filtration as a $P_\xi$-representation, so we have a filtrations of $E^\ast(\fg)$, which is the adjoint bundle of the associated $G$-bundle $E \times_{P_\xi} G$, by the subbundles $E^\ast(\fg_{\xi \geq p})$.

A map $\Theta \to \cX_{G,L}$ therefore consists of a choice of $k \in \bZ$, $\xi \in N$, a $P_\xi$-bundle $E$, and a section of $\Gamma(\Sigma, L \otimes E^\ast(\fg_{\xi \geq k}))$. Likewise the groupoid of maps $S \times \Theta \to \cX_{G,L}$ for a connected scheme $S$ is equivalent to the groupoid consisting of $(k,\xi) \in \bZ \times N$, an invertible sheaf $Q$ on $S$, a $P_\xi$-bundle $E$ on $S \times \Sigma$, and a section of $\pi_\ast(L \otimes E^\ast(\fg_{\xi \geq k})) \otimes Q$ on $S$. The discrete invariants $(k,\xi)$, along with the topological type of the $P_\xi$-bundle $E$, index the connected components of the mapping stack $\iMap(\Theta,\cX_{G,L})$. The mapping stack $\iMap(\Theta,\Higgs_{G,L})$ has the same description, except that $Q$ is trivialized and $k=0$.

Say $(E,\phi) \in \cX_{G,L}$ is an unstable $G$-Higgs bundle with Harder-Narasimhan type $\xi \in N_\bQ$. Choosing a positive $n \in \bZ$ such that $n \xi \in N$, the cocharacter $n \xi$ along with the Harder-Narasimhan reduction to $P_{n\xi} = P_\xi \subset G$ correspond to a $G$-bundle on $\Theta \times \Sigma$, and the fact that $\phi$ is compatible with this reduction means that $\phi$ lies in $H^0(\Sigma, L \otimes E^\ast(\fg_{\xi \geq 0})) \subset H^0(\Sigma, L \otimes E^\ast(\fg))$. Hence the Harder-Narasimhan reduction gives the data of a map $f : \Theta \to \cX_{G,L}$ along with an isomorphism $f(1) \simeq (E,\phi)$, canonical up to the choice of multiplier $n$ corresponding ramified covers of $\Theta$. This description works in families as well.

The \emph{centers} of the $\Theta$-strata, denoted $\cZ_\xi^{\ss}$, are open substacks of $\iMap(B\bC^\ast,\cX_{G,L})$. They also have explicit modular interpretations in this case: they are equivalent to $\cX_{G_\xi, L}^{\ss}$, where $G_\xi$ denotes the centralizer of $\xi$ (i.e. the Levi subgroup associated to $\xi$). The map $\sigma_\xi : \cZ_\xi^{\ss} \to \cX_{G,L}$, factoring through $\cS_\xi$, is the map which induces a family of $G$-Higgs bundles from a family of $G_\xi$-Higgs bundles via the inclusion $G_\xi \hookrightarrow G$. The projection $\cS_\xi \to \cZ_\xi^{\ss}$ is the induction map for the homomorphism $P_\xi \to G_\xi$.

$\cZ_\xi$ is a $\bG_m$-gerbe, as $\xi$ itself defines a homomorphism $\xi_S : \bG_m \times S \to \op{Aut}_S(E,\phi)$ for every $S$-family of $G_\xi$-Higgs $(E,\phi)$. Thus every complex in $\QCoh(\cZ_\xi)$ decomposes functorially into a direct sum of complexes on which $\xi$ acts with constant weight, and we use the term \emph{weights of $F \in \QCoh(\cZ_\xi)$} to refer to the weights in which these summand on non-vanishing. We sometimes use $F^{>0}$ and $F^{\leq 0}$ to denote the direct summand concentrated in weight $>0$ and $\leq 0$ respectively. After restricting to $\cZ_\xi^\ss$, \cite[Lemma 2.4]{halpern2015remarks} identifies the exact triangles of cotangent complexes,
\[
\xymatrix@R=10pt{
L_{\cS_\xi/ \cX}[-1]|_{\cZ_\xi^\ss} \ar[d]^{\simeq} \ar[r] & L_\cX|_{\cZ_\xi^\ss} \ar[d]^{\simeq} \ar[r] & L_{\cS_\xi}|_{\cZ_\xi^\ss} \ar[d]^{\simeq} \ar[r] & \\
L_{\cX}|_{\cZ_{\xi}^\ss}^{>0} \ar[r] & L_{\cX}|_{\cZ_{\xi}^\ss} \ar[r] & L_{\cX}|_{\cZ_{\xi}^\ss}^{\leq 0} \ar[r] & },
\]
the top row refers to the canonical exact triangle for the inclusion $\cS_\xi \hookrightarrow \cX$ with the derived structure discussed above, and the bottom from is the canonical decomposition of $L_\cX|_{\cZ_\xi^\ss}$ into positive and nonnegative weight pieces. This identification fails if instead we used the underlying classical stack $\cS^{\rm{cl}}_\xi$.

\section{Index formulas on $\cX$}

\subsection{Recollections on non-abelian localization}

Given a $\Theta$-stratification of a quasi-smooth derived stack $\cY = \cY^{ss} \cup \cS_\beta$ indexed by some totally ordered set $\beta \in I$, we recall the non-abelian localization theorem of \cite{halpern2015remarks}. We define $L^+_\beta \in \Perf(\cZ_\beta^{\ss})$ to be the direct summand of $L_\cY|_{\cZ_\beta^{\ss}}$ with homology concentrated in positive weights, and likewise $L^-_\beta$ is the summand with negative weights. Consider the complexes
\[
\bE_\beta = \Sym(L_\beta^- \oplus (L_\beta^+)^\dual) \otimes \det(L_\beta^+)^\dual [-\rank(L_\beta^+)] \in \QCoh(\cZ_\beta^\ss)
\]
We define $\eta_\beta$ to be the weight of $\det(L_\beta^+)$. Note that the complex is concentrated in weight $\leq \eta_\beta$, and the summand of $\bE_\beta$ in any fixed weight is a perfect complex. We shall say that a complex of vector spaces is ``finite dimensional" if the direct sum of all homology groups is finite dimensional as a $k$-vector space.

\begin{thm}[Virtual non-abelian localization] \label{thm:virtual_localization} \cite{halpern2015remarks}
Let $\cY$ be a quasi-smooth derived stack with a $\Theta$-stratification $\cY = \cY^{\ss} \cup \bigcup_\alpha \cS_\beta$. Assume that $F \in \Perf(\cY)$ is such that $R\Gamma(\cZ_\beta^\ss,F|_{\cZ_\beta^\ss} \otimes \bE_\beta)$ is finite dimensional for all $\beta$ and acyclic for all but finitely many $\beta$. Then $R\Gamma(\cY,F)$ is finite dimensional if and only if $R\Gamma(\cY^{\ss},F)$ is, and we have
\[
\chi(\cY,F) = \chi(\cY^{\ss},F) + \sum_{\beta} \chi(\cZ_\beta^{\ss},F|_{\cZ_\beta^{\ss}} \otimes \bE_\beta).
\]
\end{thm}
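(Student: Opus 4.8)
The plan is to reduce the statement, by induction along the stratification, to a single local computation around one stratum, and then to carry that out using cohomology with supports, the retraction onto the center of the stratum, and the cotangent-complex identifications recalled above.

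\textbf{Step 1: reduction to one stratum.} Over any quasi-compact open substack of $\cY$ a $\Theta$-stratification restricts to a finite one, so there is a finite filtration by open substacks $\cY^{\ss} = \cU_0 \subset \cU_1 \subset \cdots \subset \cU_N$ with $\cU_j = \cU_{j-1} \cup \cS_{\beta_j}$ and $\cS_{\beta_j}$ closed in $\cU_j$; since $F$ is perfect and only finitely many strata contribute, I would first pass to such a filtration (taking a filtered colimit over quasi-compact opens in general). For each $j$ the open--closed decomposition of $\cU_j$ gives the recollement triangle
\[
R\Gamma_{\cS_{\beta_j}}(\cU_j, F) \longrightarrow R\Gamma(\cU_j, F) \longrightarrow R\Gamma(\cU_{j-1},F) \longrightarrow,
\]
with $R\Gamma_{\cS_{\beta_j}}(-)$ cohomology with supports in $\cS_{\beta_j}$. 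Granting the local identification of Step 2, each $R\Gamma_{\cS_{\beta_j}}(\cU_j,F)$ is finite dimensional and all but finitely many vanish, so induction on $j$ shows that $R\Gamma(\cY,F)$ is finite dimensional if and only if $R\Gamma(\cY^{\ss},F)$ is, and telescoping the triangles gives $\chi(\cY,F) = \chi(\cY^{\ss},F) + \sum_j \chi\big(\cZ_{\beta_j}^{\ss}, F|_{\cZ_{\beta_j}^{\ss}}\otimes\bE_{\beta_j}\big)$; since each $\beta$ occurs as exactly one $\beta_j$, this is the claimed formula.

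\textbf{Step 2: the local identification.} It remains to prove that for $\cS_\beta$ closed in an open $\cU \subseteq \cY$,
\[
R\Gamma_{\cS_\beta}(\cU, F) \simeq R\Gamma\big(\cZ_\beta^{\ss},\, F|_{\cZ_\beta^{\ss}} \otimes \bE_\beta\big).
\]
First, $R\Gamma_{\cS_\beta}(\cU,F) = R\Gamma\big(\cU,\, F \otimes_{\cO_\cU} \underline{R\Gamma}_{\cS_\beta}(\cO_\cU)\big)$, and since $\underline{R\Gamma}_{\cS_\beta}(\cO_\cU)$ is supported on $\cS_\beta$ the projection formula rewrites this as $R\Gamma\big(\cS_\beta,\, F|_{\cS_\beta} \otimes \cK_\beta\big)$, where $\cK_\beta \in \QCoh(\cS_\beta)$ is the local cohomology of $\cO_\cU$ along $\cS_\beta$, transported to $\cS_\beta$. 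By the cotangent triangles of \cite[Lemma 2.4]{halpern2015remarks}, which identify the derived conormal of $\cS_\beta$ in $\cU$ over $\cZ_\beta^{\ss}$ with $L^+_\beta$ and identify $L_{\cS_\beta/\cZ_\beta^{\ss}}|_{\cZ_\beta^{\ss}}$ with $L^-_\beta$, together with the deformation to the normal cone (to pass to the ``linear'' model, which is essential because $\cS_\beta \hookrightarrow \cU$ need not be regularly embedded), one finds $\cK_\beta \simeq p^\ast\big(\Sym((L^+_\beta)^\dual) \otimes \det(L^+_\beta)^\dual[-\rank L^+_\beta]\big)$ --- the derived analogue of ``inverse powers of the normal coordinates.'' Finally, the retraction $p \colon \cS_\beta \to \cZ_\beta^{\ss}$ obeys the Koszul-duality identity $p_\ast \cG \simeq (\sigma_\beta^\ast \cG) \otimes \Sym(L^-_\beta)$ for $\cG \in \Perf(\cS_\beta)$ --- the exterior--symmetric cancellation along the attracting bundle, a form of $\bG_m$-localization. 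Applying this to $\cG = F|_{\cS_\beta}$ (so $\sigma_\beta^\ast\cG = F|_{\cZ_\beta^{\ss}}$), then the projection formula for $p$, yields
\[
R\Gamma_{\cS_\beta}(\cU,F) \simeq R\Gamma\big(\cZ_\beta^{\ss},\ F|_{\cZ_\beta^{\ss}} \otimes \Sym\big(L^-_\beta \oplus (L^+_\beta)^\dual\big) \otimes \det(L^+_\beta)^\dual[-\rank L^+_\beta]\big),
\]
which is exactly $R\Gamma(\cZ_\beta^{\ss}, F|_{\cZ_\beta^{\ss}}\otimes\bE_\beta)$. Each $\bG_m$-weight piece of the integrand is perfect on $\cZ_\beta^{\ss}$, so the hypothesis of the theorem is precisely what makes the Euler characteristics of Step 1 defined.

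\textbf{Main obstacle.} The delicate point is the identification of $\cK_\beta$ in Step 2: when $\cY$ is singular or derived the immersion $\cS_\beta \hookrightarrow \cU$ is not regular --- indeed not quasi-smooth --- so there is no Koszul resolution of $\cO_{\cS_\beta}$ and no elementary description of local cohomology along it, and this is exactly where derived algebraic geometry is indispensable. The way through is that the $\Theta$-stratum has a canonical derived structure, as an open substack of the mapping stack $\iMap(\Theta,\cY)$ (algebraic by \cite{halpern2014mapping}), for which the cotangent triangles of \cite[Lemma 2.4]{halpern2015remarks} hold; replacing $\cU$ near $\cS_\beta$ by its derived deformation to the normal cone --- a linear derived stack over $\cS_\beta$ whose zero-section conormal restricts to $L^+_\beta$ over $\cZ_\beta^{\ss}$ --- reduces the computation to the linear model, where it is the explicit $\Sym\otimes\det[-\rank]$ expression above. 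The rest is bookkeeping that still needs care: all functors taken $\infty$-categorically, compatibility with the $\bG_m$-grading throughout, and convergence of the weight-indexed sums, again controlled by the finiteness hypothesis. When $\cY$ is a smooth global-quotient stack all of this collapses to Teleman's non-abelian localization theorem \cite{teleman2000quantization}.
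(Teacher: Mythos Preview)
The paper does not prove this theorem: it is stated with a citation to \cite{halpern2015remarks} and used as input, so there is no proof in the paper to compare against.

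Your outline has the right shape --- peel off one closed stratum at a time via the local-cohomology triangle, then identify the local term with $R\Gamma(\cZ_\beta^{\ss},F|_{\cZ_\beta^{\ss}}\otimes\bE_\beta)$. The point to flag is that the two displayed ``$\simeq$'' assertions in Step~2 are too strong as written: neither the identification of $\cK_\beta$ with $p^\ast\big(\Sym((L^+_\beta)^\dual)\otimes\det(L^+_\beta)^\dual[-\rank L^+_\beta]\big)$ nor the formula $p_\ast\cG \simeq \sigma_\beta^\ast\cG \otimes \Sym(L^-_\beta)$ holds as an equivalence of complexes in general --- each holds only after passing to the associated graded of a suitable weight-indexed filtration (produced, respectively, by the deformation to the normal cone and by the $\bG_m$-action contracting $\cS_\beta$ onto $\cZ_\beta^{\ss}$). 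This is harmless for both conclusions of the theorem: Euler characteristics are additive in filtrations, and a bounded filtration with finite-dimensional graded pieces detects finite-dimensionality. You essentially acknowledge this in your ``main obstacle'' paragraph, but the body of Step~2 should be phrased accordingly.

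For comparison, the argument in \cite{halpern2015remarks} is organised around an infinite semiorthogonal decomposition of the derived category near the stratum, indexed by $\bG_m$-weight on $\cZ_\beta^{\ss}$; this is the categorified version of your filtration, and makes the weight-graded pieces intrinsic rather than produced via an auxiliary degeneration.
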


If $\cY^\ss$ and $\cZ_\beta^\ss$ are finite type with proper good moduli spaces, then $R\Gamma(\cY^{\ss},F)$ and $R\Gamma(\cZ_\beta^{\ss},F|_{\cZ_\beta^\ss} \otimes \bE_\beta)$ are finite dimensional for any $F \in \Perf(\cY)$, substantially simplifying the statement.\footnote{More generally this holds as long as $\cY^{\ss}$ and $\cZ_\beta^{\ss}$ are cohomologically proper in the sense of \cite{halpern2014mapping}. This condition holds for the stack $\cX_{G,L}$ (See \autoref{sect:good_moduli}).} We shall see that typically $R\Gamma(\cZ_\beta^{\ss},F|_{\cZ_\beta^\ss} \otimes \bE_\beta)$ is acyclic because the weights of $F|_{\cZ_\beta^\ss}$ are $<\eta_\beta$. Along those lines we will also use a slightly stronger result from \cite{halpern2015remarks}, with $\cY$ as above
\begin{prop}[quantization commutes with reduction]
If $G \in \Perf(\cY)$ is such that $H_\ast(G|_{\cZ_\beta^\ss})$ is supported in weights $<\eta_\beta$ for all $\beta$, then the restriction map
\[
R\Gamma(\cY,G) \to R\Gamma(\cY^\ss,G)
\]
is an equivalence.
\end{prop}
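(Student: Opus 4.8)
The plan is to deduce the ``quantization commutes with reduction'' statement directly from the virtual non-abelian localization theorem (\autoref{thm:virtual_localization}), by showing that the hypothesis on $G$ forces every correction term $R\Gamma(\cZ_\beta^\ss, G|_{\cZ_\beta^\ss} \otimes \bE_\beta)$ to vanish. The key observation is a weight estimate: by definition $\bE_\beta = \Sym(L_\beta^- \oplus (L_\beta^+)^\dual) \otimes \det(L_\beta^+)^\dual[-\rank(L_\beta^+)]$ is concentrated in $\xi_\beta$-weights $\le -\eta_\beta$, where $\eta_\beta$ is the weight of $\det(L_\beta^+)$ — indeed $L_\beta^-$ has negative weights, $(L_\beta^+)^\dual$ has negative weights, so their symmetric algebra has weights $\le 0$, and tensoring by $\det(L_\beta^+)^\dual$ shifts this down by $\eta_\beta$. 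Here I use that $L_\beta^+$ has \emph{strictly} positive weights by construction, so $\eta_\beta > 0$, and likewise the weights of $L_\beta^-$ are $< 0$; thus in fact $\bE_\beta$ is concentrated in weights $< -\eta_\beta$ on the summands coming from a nontrivial contribution of $\Sym$, but the crucial bound is just $\le -\eta_\beta$.

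First I would record that, since $\cZ_\beta^\ss$ is a $\bG_m$-gerbe (the cocharacter $\xi_\beta$ acts by automorphisms), every complex in $\QCoh(\cZ_\beta^\ss)$ splits canonically into summands of constant $\xi_\beta$-weight, and $R\Gamma(\cZ_\beta^\ss, -)$ annihilates any summand whose weights are all nonzero: global sections over a $\bG_m$-gerbe pick out precisely the weight-zero part. So it suffices to show $G|_{\cZ_\beta^\ss} \otimes \bE_\beta$ has no weight-zero homology. By the Künneth/tensor behavior of the weight grading, the weight-$0$ part of $H_\ast(G|_{\cZ_\beta^\ss} \otimes \bE_\beta)$ receives contributions only from pairs (weight $w$ piece of $H_\ast(G|_{\cZ_\beta^\ss})$, weight $-w$ piece of $H_\ast(\bE_\beta)$). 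By hypothesis the former vanishes unless $w < \eta_\beta$, and by the weight estimate above the latter vanishes unless $-w \le -\eta_\beta$, i.e. $w \ge \eta_\beta$. These two ranges are disjoint, so the weight-$0$ homology vanishes, hence $R\Gamma(\cZ_\beta^\ss, G|_{\cZ_\beta^\ss} \otimes \bE_\beta)$ is acyclic — in particular finite dimensional — for every $\beta$, and trivially acyclic for all but finitely many $\beta$.

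With the finiteness/acyclicity hypotheses of \autoref{thm:virtual_localization} verified, that theorem gives $\chi(\cY, G) = \chi(\cY^\ss, G) + \sum_\beta \chi(\cZ_\beta^\ss, G|_{\cZ_\beta^\ss}\otimes\bE_\beta) = \chi(\cY^\ss,G)$, which already yields the equality of Euler characteristics; but to upgrade this to the statement that the restriction map $R\Gamma(\cY,G) \to R\Gamma(\cY^\ss, G)$ is an \emph{equivalence} of complexes, I would instead invoke the refined form of non-abelian localization from \cite{halpern2015remarks}, which presents $R\Gamma(\cY, G)$ as built from $R\Gamma(\cY^\ss, G)$ by successively attaching the cones $R\Gamma(\cZ_\beta^\ss, G|_{\cZ_\beta^\ss}\otimes \bE_\beta)$ along the $\Theta$-stratification (this is exactly the mechanism underlying the additivity of $\chi$ in that theorem). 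Since each such cone is acyclic, the restriction map is an equivalence after each stage, hence in the (locally finite) limit. I expect the main subtlety to be not the weight bookkeeping, which is routine, but making the passage from the numerical statement to the equivalence rigorous in the derived setting: one must be careful that the filtration of $R\Gamma(\cY,G)$ by the stratification is exhaustive and that the relevant limit of complexes behaves well, which is precisely what the structure theorem of \cite{halpern2015remarks} is set up to provide. If one is content with a cleaner self-contained argument, one can alternatively cite \cite[Lemma 2.9]{halpern2015remarks} directly, since the hypothesis ``$H_\ast(G|_{\cZ_\beta^\ss})$ supported in weights $< \eta_\beta$'' is exactly the hypothesis of that lemma.
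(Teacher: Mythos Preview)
Your proposal is correct, and your closing sentence is exactly the paper's proof: the paper directly invokes \cite[Lemma~2.9]{halpern2015remarks} (applied with $F=\cO_\cY$), after translating the hypothesis on the weights of $G|_{\cZ_\beta^\ss}$ into the condition $G \in D^-\Coh(\cY)^{<0}$ via \cite[Remark~3.11]{halpern2015remarks}. Your main line of argument --- showing by weight bookkeeping that each $R\Gamma(\cZ_\beta^\ss, G|_{\cZ_\beta^\ss}\otimes\bE_\beta)$ is acyclic and then feeding this into \autoref{thm:virtual_localization} --- is sound, but as you yourself acknowledge it only yields equality of Euler characteristics, and the upgrade to an equivalence of complexes still requires the structure theorem underlying Lemma~2.9. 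So the paper skips the detour and cites the lemma directly; your weight computation is correct but ultimately subsumed by that citation.
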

\begin{proof}
It suffices to prove this for a single closed stratum. This is \cite[Lemma 2.9]{halpern2015remarks} with $F= \cO_\cY$, which lies in $D^-\Coh(\cY)^{\geq 0}$ by definition. In order to apply that lemma, we must observe that a \emph{perfect} complex $G$ lies in $D^-\Coh(\cY)^{<0}$ if and only if the weights of $G|_{\cZ_\beta^\ss}$ are $<\eta_\beta$ for all $\beta$, which follows from \cite[Remark 3.11]{halpern2015remarks}.
\end{proof}

Define a complex $F \in \Perf(\cY)$ to be
\begin{itemize}
\item \emph{almost admissible} if the weights of $F|_{\cZ_\beta^\ss}$ are $<\eta_\beta$ for all but finitely many $\beta$, and
\item \emph{$\cL$-admissible} with respect to $\cL \in \Pic(\cY)$ if $\cL \otimes F^{\otimes m}$ is almost admissible for any $m>0$.
\end{itemize}
We refer to the category of $\cL$-admissible complexes as $\Perf(\cY)^{\cL\rm{-adm}}$. This category has some convenient properties
\begin{lem} \label{lem:admissible}
$\Perf(\cY)^{\cL\rm{-adm}}$ is closed under shifts, cones, summands, and tensor products, and symmetric powers.
\end{lem}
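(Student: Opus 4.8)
The plan is to reduce the statement to elementary estimates on $\bG_m$-weights. For $F \in \Perf(\cY)$ and a stratum index $\beta$, let $w_\beta(F) \in \bZ \cup \{-\infty\}$ denote the highest weight in which the homology of $F|_{\cZ_\beta^\ss}$ is nonzero, with the convention $w_\beta(F) = -\infty$ when $F|_{\cZ_\beta^\ss}$ is acyclic; this is well defined because a perfect complex on a $\bG_m$-gerbe has only finitely many nonzero weight summands. I would first record the following facts, all immediate from the functorial weight decomposition on $\cZ_\beta^\ss$: (a) $w_\beta(F[n]) = w_\beta(F)$, and $w_\beta(F') \le w_\beta(F)$ when $F'$ is a direct summand of $F$; (b) $w_\beta(\Cone(F \to G)) \le \max(w_\beta(F), w_\beta(G))$, from the weight-graded long exact sequence in homology; (c) $w_\beta(F \otimes G) \le w_\beta(F) + w_\beta(G)$, and, since we work over $\bC$ so that $\Sym^k F$ is a retract of $F^{\otimes k}$, also $w_\beta(\Sym^k F) \le k\, w_\beta(F)$; (d) $w_\beta(\cL \otimes X) = \ell_\beta + w_\beta(X)$, where $\ell_\beta := w_\beta(\cL)$ is the single weight of the invertible sheaf $\cL|_{\cZ_\beta^\ss}$; and (e) the refinement $w_\beta(F^{\otimes m}) = m\, w_\beta(F)$, \emph{with equality}. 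For (e) the content beyond (c) is the lower bound: the weight-$(m\,w_\beta(F))$ summand of $(F|_{\cZ_\beta^\ss})^{\otimes m}$ is $X^{\otimes m}$, where $X$ is the top-weight summand of $F|_{\cZ_\beta^\ss}$, and $X^{\otimes m}$ is not acyclic because restricting along a field-valued point of the (nonempty, closed) support of the top nonzero homology sheaf of $X$ reduces this to the evident fact that a tensor power of a nonzero bounded complex of vector spaces is nonzero. Establishing (e) is the one slightly technical point of the setup.

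Using (d) and (e), $\cL$-admissibility of $F$ becomes the concrete condition that \emph{for every $m > 0$ there is a finite set $S_F(m)$ of strata with $\ell_\beta + m\, w_\beta(F) < \eta_\beta$ for all $\beta \notin S_F(m)$} (the set $S_F(m)$ is allowed to depend on $m$). In this form, closure under shifts and summands is immediate: $\cL \otimes (F[n])^{\otimes m} = (\cL \otimes F^{\otimes m})[nm]$ has the same weights, and $\cL \otimes (F')^{\otimes m}$ is a summand of $\cL \otimes F^{\otimes m}$, so $S_{F[n]}(m) = S_F(m)$ and $S_{F'}(m) = S_F(m)$ work (for summands one may in fact bypass the reformulation and argue directly). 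For a cone, (b) and (c) give $w_\beta\bigl(\cL \otimes \Cone(F\to G)^{\otimes m}\bigr) \le \ell_\beta + m\max(w_\beta(F), w_\beta(G)) = \max\bigl(\ell_\beta + m\, w_\beta(F),\, \ell_\beta + m\, w_\beta(G)\bigr)$, which is $< \eta_\beta$ off $S_F(m) \cup S_G(m)$; since $\Cone(F \xrightarrow{\,0\,} G) \simeq G \oplus F[1]$, this together with closure under shifts also yields closure under finite direct sums. For symmetric powers, (c) gives $w_\beta\bigl(\cL \otimes (\Sym^k F)^{\otimes m}\bigr) \le \ell_\beta + mk\, w_\beta(F) < \eta_\beta$ off $S_F(mk)$, for $k \ge 1$.

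The one step with an actual idea in it is closure under tensor products, and it is exactly here that the quantifier ``for every $m$'' in the definition of $\cL$-admissibility does real work; I expect this to be the only genuinely nonformal point. Given $\cL$-admissible $F, G$ and a fixed $m > 0$, I want a finite set of strata off which $w_\beta\bigl(\cL \otimes (F \otimes G)^{\otimes m}\bigr) < \eta_\beta$; by (c) it suffices to arrange $\ell_\beta + m\, w_\beta(F) + m\, w_\beta(G) < \eta_\beta$ there. Apply the reformulated condition for $F$ and for $G$ \emph{with parameter $2m$}: off the finite set $S_F(2m) \cup S_G(2m)$ one has both $\ell_\beta + 2m\, w_\beta(F) < \eta_\beta$ and $\ell_\beta + 2m\, w_\beta(G) < \eta_\beta$, and averaging these two inequalities yields precisely $\ell_\beta + m\, w_\beta(F) + m\, w_\beta(G) < \eta_\beta$. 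Everything else is bookkeeping: that a finite union of finite sets is finite, that the degenerate case $w_\beta = -\infty$ is consistent with the conventions $n + (-\infty) = -\infty$ and $k(-\infty) = -\infty$ for $n \in \bZ,\ k \ge 1$, and that $\cL \otimes F^{\otimes m}$, $\Cone(F \to G)$, and $\Sym^k F$ are again perfect so that ``almost admissible'' applies to them.
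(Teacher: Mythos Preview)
Your proof is correct and follows essentially the same approach as the paper: both pivot on the equality $w_\beta(F^{\otimes m}) = m\,w_\beta(F)$ to reformulate $\cL$-admissibility as a family of weight inequalities indexed by $m$, and then deduce closure under the listed operations from the two basic estimates $w_\beta(\Cone) \le \max$ and $w_\beta(\otimes) \le +$. Your write-up is considerably more explicit than the paper's, which simply records the reformulation and the two estimates and asserts that the claim ``follows easily''; in particular, you spell out the doubling-and-averaging trick for tensor products (apply the hypothesis at parameter $2m$ for each factor and average), which the paper leaves to the reader.
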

\begin{proof}
For $F\in \Perf(\cZ_\alpha^\ss)$, we let $\hWt$ denote the highest weight of a non-zero homology sheaf $H_i(F)$. One can show that $\hWt(F^{\otimes m}) = m \hWt(F)$, so $F\in \Perf(\cY)^{\cL\rm{-adm}}$ if and only if for all $m\geq 1$
\[
\hWt(F|_{\cZ_\beta^\ss}) < \frac{1}{m}\left(\eta_\beta - \op{wt}(\cL|_{\cZ_\alpha^\ss})\right) \text{ for almost all } \beta.
\]
From this characterization of $\Perf(\cY)^{\cL\rm{-adm}}$, the claim follows easily from the following properties, which are loosely analogous to the properties of a valuation on a ring:
\begin{gather*}
\hWt(\op{Cone}(E \to F)) \leq \max(\hWt(E),\hWt(F)); \\
\hWt(E \otimes F) \leq \hWt(E) + \hWt(F).
\end{gather*}
\end{proof}

As discussed above, if $\cY^\ss$ and $\cZ_\beta^\ss$ are cohomologically proper in the sense of \cite{halpern2014mapping}, which is the case for the $\Theta$-stratifications of $\cM_{\ttype}$ and $\cX_{\ttype}$ by \autoref{rem:properness}, then $R\Gamma(\cY,F)$ is finite dimensional for any almost admissible complex $F$. We also note that because $\Perf(\cY)^{\cL\rm{-adm}}$ is a stable symmetric monoidal $\infty$-subcategory, $K^0(\Perf(\cY)^{\cL\rm{-adm}})$ is a $\lambda$-ring and the map $K^0(\Perf(\cY)^{\cL\rm{-adm}}) \to K^0(\Perf(\cY))$ is a map of $\lambda$-rings.

Specializing to the case where $\cY = \cM_G$: If $\cL \in \Pic(\cM_G)$ has level $h$ then the $\xi$-weight of $\cL|_{\cZ_\xi^\ss}$ is $h(\xi,\xi)$ (see \autoref{lem:weights}), which is negative and grows with order $|\xi|^2$. Furthermore $\eta_\xi = -c(\xi,\xi) + O(|\xi|)$ on $\cM_G$, so $F \in \Perf(\cM_G)^{\cL\rm{-adm}}$ if for any $m>0$
\[
\hWt_\xi (F|_{\cM_{G_\xi}^\ss}) \leq \frac{1}{m} (h+c)(\xi,\xi) + o(|\xi|^2).
\]
In fact, this implies that $F$ is $\cL$-admissible for some $\cL$ of level $h>-c$ if and only if it is $\cL$-admissible for any such $\cL$. By the weight computations of \cite{teleman2009index}, the Atiyah-Bott complexes $E_x^\ast V$ and $E_\Sigma^\ast V$ admit upper bounds of order $O(|\xi|)$ on the $\xi$-weights of their restrictions to $\cM_{G_\xi}^\ss$, hence they are $\cL$-admissible for any $\cL$ of level $h>-c$.

\begin{lem}
Let $\cL \in \Pic(\cM_G)$ have level $h>-c$, and let $F \in \Perf(\cM_G \times B\bG_m)$ be a complex such that 1) $F$ is concentrated in finitely many weights with respect to $\bG_m$, and 2) $F \in \Perf(\cM_G)^{\cL\rm{-adm}}$ after forgetting the action of $\bG_m$. Then $R\Gamma(\cX_{G,L}, p^\ast(F \otimes \cL))$ is finite dimensional.
\end{lem}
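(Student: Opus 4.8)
The plan is to apply the virtual non-abelian localization theorem (\autoref{thm:virtual_localization}) to $\cY=\cX_{G,L}$ with its Harder-Narasimhan $\Theta$-stratification $\cX_{G,L}=\cX_{G,L}^\ss\cup\bigcup_\xi\cS_\xi$ and to the perfect complex $F':=p^\ast(F\otimes\cL)\in\Perf(\cX_{G,L})$. This reduces the claim to three points: (a) $R\Gamma(\cX_{G,L}^\ss,F')$ is finite dimensional; (b) $R\Gamma(\cZ_\xi^\ss,F'|_{\cZ_\xi^\ss}\otimes\bE_\xi)$ is finite dimensional for every stratum $\xi$; and (c) the complex in (b) is acyclic for all but finitely many $\xi$. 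Granting these, \autoref{thm:virtual_localization} shows that $R\Gamma(\cX_{G,L},F')$ is finite dimensional if and only if $R\Gamma(\cX_{G,L}^\ss,F')$ is, and (a) completes the argument.

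The essential point is (c), and here I would use the modular description of the centers: $\cZ_\xi^\ss\simeq\cX_{G_\xi,L}^\ss$, and under this identification $p|_{\cZ_\xi^\ss}$ is the analogous projection $p_\xi:\cX_{G_\xi,L}^\ss\to\cM_{G_\xi}\times B\bG_m$ followed by the map induced by $G_\xi\hookrightarrow G$, so $F'|_{\cZ_\xi^\ss}\simeq p_\xi^\ast\big((F\otimes\cL)|_{\cM_{G_\xi}\times B\bG_m}\big)$. Since $p_\xi$ is equivariant for the central $\bG_m\subset G_\xi$ defined by $\xi$, pulling back along it preserves $\xi$-weights; and the Higgs-field directions of $\cX_{G_\xi,L}$ all take values in the $\xi$-weight-zero subspace $\fg_\xi\subset\fg$, so they contribute neither to $L_\xi^+\oplus L_\xi^-$ nor to $\eta_\xi$. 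Therefore the $\xi$-weights of $F'|_{\cZ_\xi^\ss}$ coincide with those of $(F\otimes\cL)|_{\cM_{G_\xi}^\ss}$, and $\eta_\xi$ is the same whether computed on $\cX_{G,L}$ or on $\cM_G$. The hypothesis that $F$ is $\cL$-admissible on $\cM_G$, in its weakest case $m=1$, says precisely that $\cL\otimes F$ is almost admissible, i.e.\ the $\xi$-weights of $(\cL\otimes F)|_{\cM_{G_\xi}^\ss}$ are $<\eta_\xi$ for all but finitely many $\xi$. For such $\xi$, tensoring with $\bE_\xi$ yields a complex concentrated in strictly negative $\xi$-weights (by the weight bound on $\bE_\xi$ recorded above \autoref{thm:virtual_localization}), and since $\cZ_\xi^\ss$ is a $\bG_m$-gerbe via $\xi$, the cohomology of any complex concentrated in nonzero $\xi$-weights vanishes. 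This gives (c).

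For (a) and (b) I would use that $\cX_{G,L}^\ss$ and each $\cZ_\xi^\ss\simeq\cX_{G_\xi,L}^\ss$ are quasi-compact and cohomologically proper (\autoref{rem:properness}), so that $R\Gamma$ carries bounded coherent complexes on them to bounded complexes whose cohomology is finite dimensional in each weight of the Higgs-scaling $\bG_m$. Hypothesis (1) makes $F$, hence $F'$ (as $\cL$ has $\bG_m$-weight $0$), supported in finitely many $\bG_m$-weights, so $R\Gamma(\cX_{G,L}^\ss,F')$ is a finite sum of finite-dimensional pieces, giving (a). For (b): $F'|_{\cZ_\xi^\ss}$ is perfect on a quasi-compact stack, hence supported in finitely many $\xi$-weights, and in each $\xi$-weight $\bE_\xi$ is perfect; since $\bE_\xi$ is assembled from the $\xi$-graded summands $L_\xi^\pm$ of a complex pulled back from $\cM_{G_\xi}$, it carries $\bG_m$-weight $0$, so the $\xi$-weight-zero part of $F'|_{\cZ_\xi^\ss}\otimes\bE_\xi$ --- the only part contributing to cohomology over the $\xi$-gerbe --- is a bounded coherent complex supported in finitely many $\bG_m$-weights, and (b) follows as in (a).

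I expect the main obstacle to be the weight bookkeeping underlying (c): checking that the admissibility condition, which is phrased on $\cM_G$ through restrictions to the Harder-Narasimhan centers $\cM_{G_\xi}^\ss$, really does deliver the vanishing needed on $\cX_{G,L}$. Everything rests on the observation that the extra Higgs-field directions sit in $\xi$-weight zero, so that both the relevant $\xi$-weights of $F'|_{\cZ_\xi^\ss}$ and the invariant $\eta_\xi$ are unaffected by the passage from $\cM_G$ to $\cX_{G,L}$; once this is in hand, the remaining finiteness statements are routine, provided one keeps the two circle actions --- the central $\bG_m$ attached to $\xi$ and the Higgs-scaling $\bG_m$ --- carefully apart and invokes cohomological properness of the semistable loci.
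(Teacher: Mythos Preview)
Your approach has a genuine gap in step (c): the claim that ``the Higgs-field directions \dots\ contribute neither to $L_\xi^+\oplus L_\xi^-$ nor to $\eta_\xi$'' is false. What matters for $L_\xi^\pm$ is the restriction $L_{\cX_{G,L}}|_{\cZ_\xi^\ss}$, not the cotangent complex of $\cZ_\xi^\ss\simeq\cX_{G_\xi,L}^\ss$. From \eqref{eqn:cotangent_complex} this restriction has a summand $p^\ast\cP$, and $\cP|_{B\bG_m}^{>0}\simeq R\Gamma(\Sigma,L^\vee\otimes K\otimes E^\ast(\fg^{>0})[1])$ is generally nonzero (see \S\ref{sect:Euler_complex}). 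The paper computes $\eta_\xi^{\cX}=2\deg(L)\rho(\xi)$, which is linear in $\xi$, whereas $\eta_\xi^{\cM}=-c(\xi,\xi)+O(|\xi|)$ is quadratic. These are not the same, and almost-admissibility of $\cL\otimes F$ on $\cM_G$ (a quadratic upper bound) does not by itself force the $\xi$-weights below the linear threshold $2\deg(L)\rho(\xi)$. In particular, the statement of the lemma makes no assumption on $\deg(L)$, so $\eta_\xi^{\cX}$ can even be nonpositive, and your acyclicity argument for all but finitely many strata does not go through as written.

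The paper avoids this by a different and much shorter route: apply the projection formula along $p:\cX_{G,L}\to\cM_G\times B\bG_m$ to write $R\Gamma(\cX_{G,L},p^\ast(F\otimes\cL))\simeq R\Gamma(\cM_G\times B\bG_m,F\otimes\cL\otimes\Sym(\cP))$, then use hypothesis (1) to reduce to a finite direct sum $\bigoplus_k R\Gamma(\cM_G,F_{-k}\otimes\cL\otimes\Sym^k(\cP))$, and finally check that each $F_{-k}\otimes\Sym^k(\cP)$ is $\cL$-admissible on $\cM_G$ via \autoref{lem:admissible} and the $O(|\xi|)$ weight bound on $\cP$. This uses non-abelian localization only on $\cM_G$, where the hypotheses are formulated, and never confronts $\eta_\xi^{\cX}$ at all. (A secondary point: your reasoning in (a) and (b) invokes a ``Higgs-scaling $\bG_m$'' grading on cohomology over $\cX^\ss$, but $\cX^\ss$ is already the quotient by that action; the finiteness in (a) and (b) follows directly from cohomological properness, \autoref{rem:properness}, without any weight decomposition.)
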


\begin{proof}
By the projection formula $R\Gamma(\cX, p^\ast(F \otimes \cL)) \simeq R\Gamma(\bung \times B\bG_m,F \otimes \cL \otimes \Sym(\cP))$. This will split into a finite direct sum of complexes of the form $R\Gamma(\bung,F_k \otimes \cL \otimes \Sym^k(\cP))$, where $F_k$ is summand of $F$ of $\bG_m$-weight $k$, and finite dimensionality follows from the $\cL$-admissibility of $F_k \otimes \Sym^k(\cP)$, which we check with \autoref{lem:admissible}. $F_k$ is $\cL$-admissible because it is a summand of $F$. The $\xi$-weights of the complex $\cP$ admit a bound of order $O(|\xi|)$ by the weight computations of \cite{teleman2009index}, thus $\cP^{\otimes k}$ and $\op{Sym}^k(\cP)$ are $\cL$-admissible.
\end{proof}

\begin{rem}
In fact any $F \in \Perf(\cX)$ whose restriction to $\bung \times B\bG_m$ is concentrated in finitely many weights with respect to $\bG_m$ admits a finite filtration whose associated graded is $p^\ast(F|_{\bung \times B\bG_m})$. So the conclusion of the previous lemma applies to any $F \in \Perf(\cX)$ such that $F|_{\bung \times B\bG_m}$ satisfies the stated hypotheses.
\end{rem}

\subsubsection{Algebraic equivalence on $K$-theory}

Note that our definition of $\cL$-admissible complex is slightly more general than the notion of admissible classes in topological $K$-theory $K^0_{top}(\cM_G)$, defined in \cite{teleman2009index} as the span of Atiyah-Bott generators $E_x^\ast V$ and $E_\Sigma^\ast V$ tensored with an $\cL$ with level exceeding $c$, the level of $\cK^{1/2}$. It follows a posteriori from their explicit index formulas that for these complexes the algebraic index $\chi(\cM_G,-)$ depends only on the underlying topological $K$-theory class. We expect that for $\cL$-admissible complexes in our more general sense, $\chi(\cY,\cL \otimes F)$ only depends on the topological $K$-theory class of $F$, but we will get away with a slightly more naive formulation:

We shall consider the equivalence relation $\sim$ of algebraic equivalence on algebraic $K$-theory in three contexts: on $K^0(\Perf(X))$ when $X$ is a scheme, for $K^0$ of the category of almost admissible complexes on a quasi-smooth $\cY$ with $\Theta$-stratification, and for $K^0(\Perf(\cY)^{\cL\rm{-adm}})$ in the same situation. By definition the relation on $K^0(\Perf(\cY)^{\cL\rm{-adm}})$ is generated by the relation $F_0 \sim F_1$ if there is a connected smooth scheme $S$ with points $0,1\in S$ and $F \in \Perf(S \times \cY)$ which is $\cL$-admissible with respect to the induced $\Theta$-stratification of $S \times \cY$ and such that $F|_{\{0\}} \simeq F_0$ and $F|_{\{1\}} \simeq F_1$.

Algebraic equivalence in the category of almost admissible complexes on $\cY$ or on $K^0(\Perf(X))$ (regarded as having a trivial $\Theta$-stratification) is defined analogously. It is compatible with tensor products and $\lambda$-operations in $K^0(\Perf(X))$ and $K^0(\Perf(\cY)^{\cL\rm{-adm}})$, and the map $F \mapsto \cL \otimes F$ from almost admissible complexes to $\Perf(\cY)^{\cL\rm{-adm}}$ is compatible with algebraic equivalence. If $\cY^{\ss}$ and $\cZ_\beta^{\ss}$ admit proper good moduli spaces (or are cohomologically proper) then \autoref{thm:virtual_localization} implies that the index $\chi(\cY,-)$ descends to the quotient $K^0(\Perf(\cY)^{\cL\rm{-adm}}) / \sim$. For brevity we denote $K_{\rm{adm}}^0(\cM) = K^0(\Perf(\cM)^{\cL-\rm{adm}}) / \sim$ for any $\cL$ of level $h>-c$.

\subsection{The index formula}
\label{sect:index_formula}

First let us recall the set up of \cite{teleman2009index}. We fix a level $h \in H^4(BG;\bQ)$ such that $h' = h+c$ is a negative definite integer-valued quadratic form on $N$. Then $\xi \mapsto \iota(\xi) h'$ defines a homomorphism $N \to M$ which descends to an isogeny $\chi' : T \to T^\dual$. Note that for a weight $\alpha \in M$, we can define a formal map $1-t e^\alpha : T \to \bC$, and regard $(1-te^\alpha)^\alpha : T \to T^\dual$. We define
\[
\chi'_t = \chi' \cdot \prod_{\text{roots } \alpha>0} \left[ \frac{1-t e^\alpha}{1-t e^{-\alpha}} \right]^\alpha : T \to T^\dual
\]
We let $F_{h,t}$ denote the set of formal points of $T$ which solve the equation $\chi'_t(f) = e^{2\pi i \rho} \in T^\dual$, the generalized Bethe ansatz equation, and $F_{h,t}^{reg}$ denotes solutions which are regular $G$-conjugacy classes at $t=0$. Let $H(f) : \ft \to \ft^\dual$ denote the differential of $\chi'_t$ at $f \in T$, and convert this to a map $H^\dagger(f) : \ft \to \ft$ using $h'$. Finally, define
\begin{equation}\label{eqn:define_theta}
\theta_t(f) := \frac{\prod_{\text{\rm{roots }}\alpha} (1-e^\alpha(f))}{|F| \cdot \det (H^\dagger(f))}.
\end{equation}

\begin{ex}
For $G = SL_2$ both $N \simeq M \simeq \bZ$, and $T = \bC / \bZ(1) \simeq \bC^\ast$. The level $-\op{Tr}_{\bC^2}$ corresponds to the positive generator $\cO(1)$ of $\Pic(\cX_{G,L})$. For the level corresponding to $\cO(h)$, the map $\chi' : \bC^\ast \to \bC^\ast$ is $\chi'(f) = f^{-2(h+2)}$ for $f \in \bC^\ast$. We have
\begin{align*}
\chi_t'(f) &= f^{-2(h+2)} \left( \frac{1-t f^2}{1-tf^{-2}} \right)^2 \\
H^\dagger(f) &=  \chi'_t(f) \cdot \left( 1 + \frac{2t}{h+2} \left( \frac{f^2 - 2t + f^{-2}}{(1-tf^2)(1-tf^{-2})} \right) \right) \\
\theta_t(f)^{-1} &= \frac{2(h+2)}{2-f^2 - f^{-2}} \cdot H^\dagger(f)
\end{align*}
$F_{h,t}$ consists of formal solutions of $\chi_t'(f_t) = 1$, which are uniquely determined by their leading term $f_0$, which is a $2(h+2)$-root of unity. The Weyl group action on the set of solutions is $f_t \mapsto f_t^{-1}$, so there are $h+1$ regular solutions up to the action of $W$. Under the change of coordinates $f = e^{i\phi}$, we have $f^2 + f^{-2} = 2- 4 \sin^2(\phi)$ and $(1-tf^2)(1-tf^{-2}) = (1-t)^2 + 4t \sin^2(\phi)$. Re-expressing these equations in terms of $\phi$ leads to the formula \eqref{eqn:ex_SL2} in the introduction.
\end{ex}

\begin{ex}
In the case of $GL_2$, we identify a point in the maximal torus $T$ by its diagonal entries $(u,v) = (e^{i\tau},e^{i\sigma}) \in (\bC^\ast)^2$. A positive level corresponds to a quadratic form on the Lie algebra of the form $-h_1 (\tau^2 + \sigma^2) - 2 h_2 \tau \sigma$ where $h_1$ and $h_2$ are positive integers and $h_1>h_2$. In this case the Bethe ansatz equation is $\chi_t'(u,v) = (-1,-1)$, which amounts to
\[
u^{h_1} v^{h_2} (1-t\frac{u}{v}) (1-t \frac{v}{u})^{-1} = -1 \qquad u^{h_2} v^{h_1} (1-t\frac{u}{v})^{-1} (1-t \frac{v}{u}) = -1
\]
We leave it as an exercise for the reader to compute the Jacobian determinant of $\chi_t'(u,v)$ with respect to the coordinates $\tau$ and $\sigma$ in order to find $\theta_t(u,v)$.
\end{ex}

\begin{prop} \label{prop:index_formula}
Let $\cL \in \Pic(\bung)$ have level $h > -c$, let $\cL\langle n \rangle \in \Pic(\cM \times B\bG_m)$ denote the invertible sheaf $\cL$ concentrated in weight $-n$ with respect to $\bG_m$, and let $U$ be a representation of $G$. Then we have
\begin{equation} \label{eqn:index}
\sum_{n \in \bZ} t^n \chi(\cX_{G,L}, p^\ast(\cL\langle n \rangle \otimes E_x^\ast U)) = (1-t)^{\#_L} \sum_{f_t \in F_{h,t}^{reg}} \left(\prod_\alpha (1-t e^\alpha(f_t))\right)^{-\chi(L)} \theta_t(f_t)^{1-g} \op{Tr}_U(f_t),
\end{equation}
with $f_t \in F_{h,t}^{reg}$ ranging over a complete set of Weyl orbits, and where $\chi(L) = \deg(L)+1-g$ is the Euler characteristic and $\#_L := -\rank(\fg)\chi(L) -\dim(\fz) h^1(L)$.
\end{prop}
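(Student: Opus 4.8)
The plan is to push the Euler characteristic down from $\cX_{G,L}$ to $\cM_G$ along $p$ — no quantization-commutes-with-reduction input is needed, this is purely an index computation on the full stack — and then quote the index theorem of \cite{teleman2009index}. Since $p\colon\cX_{G,L}\to\cM_G\times B\bG_m$ is affine with $p_\ast\cO_{\cX_{G,L}} = \Sym(\cP_L)$, and $\Sym^k(\cP_L)$ is concentrated in $\bG_m$-weight $k$, the projection formula together with the fact that $R\Gamma(B\bG_m,-)$ picks out the weight-zero summand gives $\chi(\cX_{G,L},p^\ast(\cL\langle n\rangle\otimes E_x^\ast U)) = \chi(\cM_G,\cL\otimes E_x^\ast U\otimes\Sym^n(\cP_L))$, which vanishes for $n<0$ because $\cP_L$ is connective. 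Hence the left-hand side of \eqref{eqn:index} equals $\chi(\cM_G,\cL\otimes E_x^\ast U\otimes\Sym_t(\cP_L))$, where $\Sym_t(\cP_L) := \sum_{n\geq0}t^n[\Sym^n(\cP_L)]$ lives in the $t$-adic completion of $K^0(\Perf(\cM_G)^{\cL\rm{-adm}})$: each $\Sym^n(\cP_L)$ is $\cL$-admissible by \autoref{lem:admissible} since the $\xi$-weights of $\cP_L$ on $\cM_{G_\xi}^\ss$ grow like $O(|\xi|)$ (as in the finiteness lemmas above), and the cohomological properness of $\cM_G$ \cite{halpern2014mapping} makes every coefficient a finite Euler characteristic. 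Because $\chi(\cM_G,\cL\otimes-)$ descends through algebraic equivalence (\autoref{thm:virtual_localization}), I am free to replace $\Sym_t(\cP_L)\otimes E_x^\ast U$ by any convenient representative.

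Next I would compute this class. Using that $\Sym_t$ is a homomorphism $(K^0,+)\to(1+tK^0[[t]],\times)$ with $\Sym_t(E) = \lambda_{-t}(E)^{-1}$ for a bundle and $\Sym_t(x[1]) = \Sym_t(x)^{-1}$, the decomposition $\cP_L = R\pi_\ast(L^\dual\otimes K\otimes E^\ast\fg')[1]\oplus(\cO\otimes H^0(L)^\dual\otimes\fz)$ gives
\[
\Sym_t(\cP_L) = \lambda_{-t}\bigl([R\pi_\ast(L^\dual\otimes K\otimes E^\ast\fg')]\bigr)\cdot(1-t)^{-\dim(\fz)\,h^0(L)}.
\]
Restricting $\fg'$ to the maximal torus as $\ft'\oplus\bigoplus_\alpha\fg_\alpha$ (the abelianization underlying Teleman's localization) and applying relative Serre duality along $\pi\colon\Sigma\times\cM_G\to\cM_G$, the trivial summands $E^\ast(\ft')$ and $\cO\otimes\fz$ contribute an overall scalar $(1-t)^{\#_L}$ — here one uses $\chi(L^\dual\otimes K) = -\chi(L)$, $h^1(L^\dual\otimes K) = h^0(L)$, and $\dim\ft = \rank(\fg)$, so the exponent collects to $-\rank(\fg)\chi(L)-\dim(\fz)h^1(L)$ — while the root summand for each $\alpha$ contributes a factor whose evaluation at a Bethe point is $(1-te^\alpha(f_t))^{-\chi(L)}$ up to the ``$t$-twist'' $\prod_{\alpha>0}[(1-te^\alpha)/(1-te^{-\alpha})]^\alpha$ that deforms $\chi'$ into $\chi'_t$. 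Because $R\pi_\ast(M\otimes E^\ast V)$ has, up to algebraic equivalence, a $K$-class depending only on $\deg M$ and $V$ — reducible to $\sqrt K$ and effective divisors via the standard exact sequences — the class $\Sym_t(\cP_L)\otimes E_x^\ast U$ lies in the $t$-adically completed span of Teleman's admissible classes.

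Finally I would invoke \cite{teleman2009index} coefficientwise in $t$ and resum. That theorem computes $\chi(\cM_G,\cL\otimes F)$ for admissible $F$ as a sum over the regular solutions of $\chi'(f) = e^{2\pi i\rho}$ of a genus factor $\theta(f)^{1-g}$, a product over roots governed by the $R\pi_\ast$-part of $F$, and $\op{Tr}_U(f)$ from the $E_x^\ast U$-part. The deformation of the previous step replaces $\chi'$ by $\chi'_t$, so the solution set becomes the formal solutions $F_{h,t}$ and only those regular at $t=0$ contribute — exactly $F_{h,t}^{reg}$, the factor $\prod_\alpha(1-e^\alpha(f))$ in \eqref{eqn:define_theta} still killing the singular contributions; the Jacobian $H$ is replaced by $H^\dagger(f_t)$, hence $\theta(f)^{1-g}$ by $\theta_t(f_t)^{1-g}$; the root product becomes $(\prod_\alpha(1-te^\alpha(f_t)))^{-\chi(L)}$; $\op{Tr}_U(f_t)$ is unchanged; and the $(1-t)^{\#_L}$ extracted above is the stated prefactor. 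Setting $t=0$ recovers Teleman's formula verbatim, which legitimizes reading off each $t^n$-coefficient from it since that coefficient is the honest index of an $\cL$-admissible class.

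The main obstacle I anticipate is the bookkeeping of normalizations rather than any conceptual point. Specifically: (i) tracking the exact power of $1-t$ produced by $\Sym_t$ of the trivial $\ft'$- and $\fz$-isotypic summands of $\cP_L$ (both unshifted and $[1]$-shifted) and confirming it equals $\#_L$ — this is also where the overall factor of $1-t$ by which our answer differs from \cite{gukov2015equivariant} enters; (ii) verifying that the $t$-deformation of the Weyl-character and residue manipulations in \cite{teleman2009index} really amounts to just substituting $\chi'_t$ for $\chi'$ and $H^\dagger$ for $H$, with the $e^{2\pi i\rho}$ shift and the restriction to $F_{h,t}^{reg}$ (equivalently, the vanishing of singular contributions through $\prod_\alpha(1-e^\alpha(f))$) surviving intact; and (iii) confirming that all the generating functions in sight converge as formal power series in $t$, so that ``compute the $t^n$-coefficient via \cite{teleman2009index} and resum'' is rigorous.
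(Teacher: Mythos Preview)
Your approach is essentially the same as the paper's: push down along $p$ via the projection formula to get $\chi(\cM_G,\cL\otimes E_x^\ast U\otimes\Sym_t(\cP_L))$, rewrite $\Sym_t(\cP_L)=\lambda_{-t}(\cP_L[1])$, use algebraic equivalence on $\Sigma$ to reduce $\cP_L$ to the Atiyah--Bott generators, extract the $(1-t)^{\#_L}$ from the trivial summands, and then invoke \cite{teleman2009index}. The paper carries out exactly these steps.

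One point where the paper is sharper than your sketch: you write ``restricting $\fg'$ to the maximal torus as $\ft'\oplus\bigoplus_\alpha\fg_\alpha$'', but this is not a decomposition of $G$-representations, so it does not literally make sense on $\cM_G$. The paper instead uses the explicit algebraic equivalence $L^\vee\otimes K\sim\sqrt{K}+(g-1-\deg L)\cO_x$ in $K^0(\Perf(\Sigma))$ to write $\cP_L\sim(\deg L+1-g)E_x^\ast\fg'-E_\Sigma^\ast\fg'+\dim(\fz)h^0(L)\cdot\cO_\cM$, hence $\lambda_{-t}(\cP_L[1])\sim\lambda_{-t}(E_\Sigma^\ast\fg)\otimes\lambda_{-t}(E_x^\ast\fg')^{g-1-\deg L}\otimes(1-t)^{-\dim(\fz)h^0(L)}$, and then rewrites $\lambda_{-t}(E_\Sigma^\ast\fg)$ via Adams operations as $\exp\bigl[-\sum_{p>0}\frac{t^p}{p^2}E_\Sigma^\ast(\psi^p\fg)\bigr]$, which is precisely the shape required by \cite[Theorem~2.15]{teleman2009index}. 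Your obstacle (ii) --- checking that the $t$-deformation really amounts to replacing $\chi'$ by $\chi'_t$ --- is thus handled not by redoing the residue calculus but by landing squarely inside an already-proved theorem; the weight-space heuristic you describe is what that theorem outputs, not an intermediate step you perform.
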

\begin{proof}
By the projection formula the power series above is the index of the series of admissible complexes
\[
\sum_{n \geq 0} t^n \Sym^n(\cP_L) \otimes \cL \otimes E_x^\ast U,
\]
which we can rewrite in $K^0_{\rm{adm}}(\cM)[[t]]$ as $\lambda_{-t}(\cP_L[1]) \otimes \cL \otimes E_x^\ast U$.

We can compute this index by a straightforward modification of \cite[Theorem 6.4]{teleman2009index}: We rewrite $L^\dual \otimes K \sim \sqrt{K} + (g-1 - \deg(L)) \cO_x$ in $K^0(\Perf(\Sigma))$. The compatibility of $R\pi_\ast((-) \otimes E^\ast(V)) : K^0(\Perf(\Sigma)) \to K^0(\Perf(\cM_G))$ with algebraic equivalence implies that
\[
\cP_L \sim (\deg(L)+1-g) E_x \fg' - E_\Sigma \fg' + \dim(\fz) h^0(L) \cO_\cM
\]
as $\cL$-admissible complexes, and $E_\Sigma \fg' = E_\Sigma \fg$ in $K$-theory, so
\[
\lambda_{-t}(\cP_L[1]) \sim \lambda_{-t}(E_\Sigma \fg) \otimes \lambda_{-t}(E_x \fg')^{\otimes(g-1-\deg(L))} \otimes (1-t)^{-\dim(\fz) h^0(L)}
\]
in each $t$-degree.

By expressing the class $\lambda_{-t}$ in terms of Adams operations $\psi^p$ as in \cite{teleman2009index} and using the fact that $\psi^p E_\Sigma^\ast V = \frac{1}{n} E_\Sigma^\ast(\psi^p V)$, we rewrite the index $\chi(\cM,\lambda_{-t}(\cP_L[1]) \otimes \cL \otimes E_x^\ast U)$ as 
\[
(1-t)^{-\dim(\fz) h^0(L)} \chi\left(\cM,\exp\left[- \sum_{p>0} \frac{t^p}{p^2} E_\Sigma(\psi^p(\fg))\right] \otimes E_x^\ast(\lambda_{-t}(\fg')^{\otimes(g-1-\deg(L))}) \otimes \cL \otimes E_x^\ast U \right).
\]
The formula now follows from the main index formula of \cite[Theorem 2.15]{teleman2009index}. The prefactor of $(\prod_\alpha (1-t e^\alpha))^{g-1-\deg(L)}$ in the final answer comes from the factor $E_x^\ast(\lambda_{-t}(\fg')^{\otimes(g-1-\deg(L))})$.
\end{proof}

\begin{rem}
This computation can be readily modified to incorporate other even Atiyah-Bott generators as in \cite[Theorem 6.4]{teleman2009index}, or odd Atiyah-Bott generators.
\end{rem}

\begin{rem}[Parabolic Higgs bundles] \label{rem:parabolic}
In the notation of \cite{teleman2009index}, we may consider the stack $\cM(x,\mathbf{B})$ of $G$-bundles along with a reduction of structure group to $B$ at the point $x$ (i.e. parabolic bundles for the full flag parabolic structure). This projects $p : \cM(x,\mathbf{B}) \to \cM$ as a $G/B$ flag bundle, and there is a natural line bundle on $\cM(x,\mathbf{B})$ extending the weight line bundles $\cO(\mu)$ on the fibers of $p$. We let $\cL\{\mu\}$ denote the twist by this line bundle. There are equivariant index formulas on the cotangent stack $T^\ast \cM(x,\mathbf{B})$, i.e. the stack of parabolic Higgs bundles, analogous to the one computed above:

Consider the graded index of $\cL\{\mu\} \otimes E_x^\ast U$ pulled back to $T^\ast \cM(x,\mathbf{B})$, defined as a generating function for the index on the stack $T^\ast \cM(x,\mathbf{B}) / \bG_m$ as in \autoref{prop:index_formula}. This amounts to computing
\[
\chi_{\bC^\ast}(T^\ast \cM(x,\mathbf{B}), \cL\{\mu\} \otimes E_x^\ast U) := \sum_n t^n \chi(\cM(x,\mathbf{B}),\Sym^n(T_{\cM(x,\mathbf{B})}) \otimes \cL\{\mu\} \otimes E_x^\ast U),
\]
where $T_{\cM(x,\mathbf{B})}$ is the tangent complex. The canonical exact triangle for tangent complexes gives the identity $\lambda_{-t}(p^\ast(T_{\cM}[1])) \otimes \lambda_{-t}(T_{p}[1])$, where $T_p$ is the relative tangent bundle. Therefore this index is equal to the index of $p_\ast(\lambda_{-t}(T_{p}[1]) \otimes \cO\{\mu\}) \otimes \lambda_{-t}(T_{\cM}[1]) \otimes \cL$ on $\cM$ itself via the projection formula, and the result is
\begin{equation}\label{eqn:parabolic}
(1-t)^{-\rank(\fg) \chi(L)} \sum_{f_t \in F_{h,t}^{reg}} \left(\prod_\alpha (1-t e^\alpha(f_t))\right)^{-\chi(L)} \theta_t(f_t)^{1-g} \frac{e^\mu}{\prod \limits_{\alpha >0} (1-te^{-\alpha})(1-e^\alpha)} \op{Tr}_U(f_t),
\end{equation}
where the sum is taken over all of $F_{h,t}^{reg}$ instead of Weyl orbits. The factors of $1-te^{-\alpha}$ and $e^\mu$ account for the weights of $\lambda_{-t}(T_\pi [1]) \otimes \cO\{\mu\}$, whereas the factors of $1-e^\alpha$ and the summation over $W$ come from the Weyl character formula.\footnote{This is a purely formal consequence of the $W$ anti-invariance of the numerator and denominator in the Weyl character formula. If $V_\mu$ is the highest weight representation corresponding to a character $\mu$ of $B$ (with a homological degree shift, if $\mu$ is not dominant), $F \subset T$ is a Weyl-invariant set of points, and $\theta(f)$ a $W$-invariant function on $T$, then $\sum_{F/W} \theta(f) \op{Tr}_{V_\mu}(f) = \sum_F \theta(f) \frac{e^\mu(f)}{\prod \limits_{\alpha>0} (1-e^\alpha(f))}$.}

Our methods can be extended to treat the case of $T^\ast \cM(x,\mathbf{B})$, and more generally $T^\ast \cM(x,\mathbf{P})$ for any quasi-parabolic datum $\mathbf{P}$ (See \cite[\S 9]{teleman2000quantization}). In this setting the $\Theta$-stratification of $T^\ast \cM(x,\mathbf{P})$, and even the semistable locus, depends on a choice of stability condition, encoded in the choice of a positive line bundle $\cL$ on $\cM(x,\mathbf{P})$. For example on $\cM(x,\mathbf{B})$ this is a line bundle of the form $\cL\{\mu\}$ above, so it is specified by a positive line bundle on $\cM$ and a weight of $B$. In principal, the index formula \eqref{eqn:parabolic}, when $U$ is trivial, can be interpreted as the index of such a positive $\cL\{\mu\}$ on $T^\ast \cM(x,\mathbf{P})^\ss$ where semistability is taken with respect to the same $\cL\{\mu\}$. As the analysis of the stratification is a bit more delicate, we postpone a full discussion for future work.
\end{rem}

\section{The equivariant Verlinde formula}

We now use \autoref{thm:virtual_localization} combined with the index formula of \autoref{prop:index_formula} to establish formulas for the equivariant index of certain positive bundles on the semistable locus $\cX_{G,L,\ttype}^\ss$. 

\subsection{The Euler complexes for the moduli of Higgs bundles}
\label{sect:Euler_complex}

We now compute $\bE_\xi$ for the $\Theta$-stratification of $\cX_{G,L}$. Specifically we would like to compute the highest weight $-\eta_\xi$ appearing in $\bE_\xi$. As $\cX \simeq \iSpec_{\cM \times B\bG_m}(\Sym(\cP))$, we have that $L_{\cX/\cM} \simeq p^\ast \cP$, and we can express $L_\cX$ as
\begin{equation} \label{eqn:cotangent_complex}
L_\cX \simeq \Cone(p^\ast(\cP) [-1] \to p^\ast (L_{\cM \times B\bG_m})),
\end{equation}
where $p : \cX \to \bung \times B\bG_m$ is the projection. It follows that
\[
\det(L_\xi^+) := \det(L_\cX|_{\cZ_\xi^\ss}^{>0}) \simeq p^\ast \left[ \det(T \cM|_{\cZ_\xi^\ss}^{<0})^\dual \otimes \det(\cP|_{\cZ_\xi^\ss}^{>0}) \right].
\]
For any point in $\cZ_\xi^{\ss}$, classifying a map $B\bG_m \to \cX$, we can compose with $p$ to get a map $\gamma : B\bG_m \to \bung$ (this corresponds to forgetting the Higgs field and the auxiliary map to $B\bG_m$). So we have
\[
\eta_\xi = \op{wt}_\xi \left( \det(T \bung|_{B\bG_m}^{<0})^\dual \otimes \det(\cP|_{B\bG_m}^{>0}) \right),\]
where we restrict to $B\bG_m$ along $\gamma$.

We let $\fg^{>0}$ and $\fg^{<0}$ denote the subspace with positive (resp. negative) weights with respect to $\xi$, and we compute
\begin{gather*}
\cP|_{B\bG_m}^{>0} \simeq R\Gamma(\Sigma, L^\dual \otimes K \otimes E^\ast(\fg^{>0})[1]), \text{ and} \\
(T\bung|_{B\bG_m}^{<0})^\dual \simeq R\Gamma(\Sigma,E^\ast(\fg^{<0})[1])^\dual \simeq R\Gamma(\Sigma,E^\ast(\fg^{>0}) \otimes K).
\end{gather*}
Using the fact that $L^\dual \otimes K [1] \oplus K \sim \cO_x^{\oplus \deg(L)}$ in $K^0(\Perf(\Sigma))$ we have
\begin{align*}
\eta_\xi &= \op{wt}_\xi \det( R\Gamma(\Sigma, E^\ast(\fg^{>0}) \otimes K \otimes (L^\dual [1] \oplus \cO_\Sigma))) \\
&=  \deg(L) \cdot \op{wt}_\xi \det(\fg^{>0}) = \deg(L) \sum_{\alpha \in \Phi^+} \alpha(\xi) = 2 \deg(L) \rho(\xi).
\end{align*}
Here $\rho$ denotes half the sum of positive roots of $G$. Observe that if $\deg(L) \geq 0$, then $\eta_\xi \geq 0$ for all $\xi$. In particular this happens for the most commonly studied case $L = K$ as long as $g>0$.

\subsection{$C$-weights and connected components}

Using the central action of $C$, we will produce index formulas for each individual component of $\Higgs_{G,L}$.

\begin{lem} \label{lem:component_weights}
If $\cL \in \Pic(\cX_{G,L})$ has level $h$, then for any $\gamma \in \pi_1(G)$ the $C$-weight of $\cL|_{\cX_{\ttype}}$ is $\iota(\gamma) h$.
\end{lem}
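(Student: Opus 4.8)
The plan is to reduce to the determinant line bundles $D_\Sigma(V)$ and read off the $C$-action from a determinant of cohomology. I would first observe that, since $\cX_{\ttype}\to\cX_{\ttype}\thickslash C$ is a $C$-gerbe over a connected base, any line bundle $\mathcal{N}$ on $\cX_{\ttype}$ has a well-defined $C$-weight, and this weight depends only on $c_1(\mathcal{N})$ modulo torsion: at any point $x$ the central inclusion $C\hookrightarrow\op{Aut}(x)$ gives a map $\iota_x\colon BC\to\cX_{\ttype}$ with $\iota_x^\ast c_1(\mathcal{N})\in H^2(BC;\bZ)=\widehat{C}$ equal to the $C$-weight, and $\widehat{C}$ is torsion-free. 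In particular a line bundle with rationally trivial $c_1$ has trivial $C$-weight on every component, so the $C$-weight of $\cL|_{\cX_{\ttype}}$ is a $\bQ$-linear function of the level $h\in H^4(BG;\bQ)$.

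Since every class of positive level is a $\bQ_{>0}$-combination of the $c_1(D_\Sigma V)$ (as recalled in the excerpt) and the level is a well-defined element of $H^4(BG;\bQ)$, the levels of the $D_\Sigma(V)$ rationally span $H^4(BG;\bQ)$, so it is enough to prove the formula for $\cL=D_\Sigma(V)$. For this, decompose $V=\bigoplus_{\mu\in\widehat{C}}V_\mu$ into $C$-isotypic pieces, so that $D_\Sigma(V)\simeq\bigotimes_\mu D_\Sigma(V_\mu)$ with $D_\Sigma(V_\mu)=\det^{-1}R\pi_\ast(\sqrt{K}\otimes E^\ast V_\mu)$. Since $C$ acts on the bundle $E^\ast V_\mu$ on $\Sigma\times\cX_{\ttype}$ through the single character $\mu$, it acts on $R\pi_\ast(\sqrt{K}\otimes E^\ast V_\mu)$ through $\mu$, hence on $D_\Sigma(V_\mu)|_{\cX_{\ttype}}$ through $-\chi_\mu\,\mu$, where $\chi_\mu=\chi(\Sigma,\sqrt{K}\otimes E^\ast V_\mu)$ evaluated on a bundle of topological type $\ttype$. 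By Riemann--Roch, and because $\deg\sqrt{K}=g-1$, the rank term cancels and $\chi_\mu=\deg E^\ast V_\mu$; this degree depends only on $\ttype$ and on the determinant character of the representation $V_\mu$, and pairing $M_\bQ\times N_\bQ\to\bQ$ against the $W$-invariant lift of $\ttype$ gives $\chi_\mu=\sum_{\nu}\langle\nu,\ttype\rangle$, the sum over the weights $\nu$ of $V_\mu$ with multiplicity. Summing over $\mu$ and using $\nu|_C=\mu$ for $\nu$ a weight of $V_\mu$, the $C$-weight of $D_\Sigma(V)|_{\cX_{\ttype}}$ is the restriction to $C$ of $-\sum_{\nu}\langle\nu,\ttype\rangle\,\nu\in M_\bQ$, the sum now over all weights of $V$.

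Finally I would match this with $\iota(\ttype)h$: by the normalization of the level in \cite{teleman2009index} -- which is exactly what the $\sqrt{K}$-twist in the definition of $D_\Sigma$ is designed to enforce -- the level of $D_\Sigma(V)$ is the invariant quadratic form $\xi\mapsto-\op{Tr}_V(\xi^2)=-\sum_{\nu}\langle\nu,\xi\rangle^2$, whose associated polarization $N\to M_\bQ$ is $\xi\mapsto-\sum_{\nu}\langle\nu,\xi\rangle\,\nu$. Hence $-\sum_{\nu}\langle\nu,\ttype\rangle\,\nu=\iota(\ttype)\bigl(\text{level of }D_\Sigma V\bigr)$, and restricting to $C$ yields the claim for $D_\Sigma(V)$; extending $\bQ$-linearly in the level finishes the proof. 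I expect the only real obstacle to be bookkeeping of normalizations and conventions -- confirming that the level of $D_\Sigma(V)$ is $-\op{Tr}_V((\cdot)^2)$ with no stray factor of $2$ or sign, and keeping the identifications consistent ($\pi_1(G)$ versus its image in $N_\bQ$, $C$ versus the full center $Z(G)$, and the integrality of the resulting character) -- while everything else is formal.
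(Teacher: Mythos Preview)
Your proposal is correct and follows essentially the same approach as the paper: reduce by $\bQ$-linearity in the level to $\cL=D_\Sigma(V)$, decompose $V$ into $C$-isotypic pieces, and compute the $C$-weight of the determinant of cohomology via Riemann--Roch (the $\sqrt{K}$-twist killing the rank term so that only $\deg E^\ast V_\mu$ survives), then identify the answer with $-\op{Tr}_V(\gamma\cdot(-))=\iota(\gamma)h$. The only cosmetic difference is that you unpack the degree as a sum over all $T$-weights $\nu$ of $V$, whereas the paper packages it via $\det(V_\mu)$ and $\op{rk}(V_\mu)\langle\gamma,\mu\rangle$; these are the same computation at different levels of granularity.
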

\begin{proof}
The claim only depends on $c_1(\cL)$ and is linear in $h$, so it suffices to show this for $D_\Sigma(V)$. Let $V = \bigoplus_{\mu \in \bar{C}} V_\mu$ be the decomposition into $C$-weights. Using the fact that $E_\Sigma^\ast V_\mu$ has $C$-weight $\mu$ and $\det(V_\mu) = \bC_{\op{rk}(V_\mu) \cdot \mu}$ is an integral (as opposed to fractional) character of $G$, one computes that for $E \in \cM_G$, $D_\Sigma(V)_E$ is concentrated in $C$-weight
\[- \sum \deg(E^\ast \bC_{\op{rk}(V_\mu) \cdot \mu}) \cdot \mu = -\sum \op{rk}(V_\mu) \langle \gamma, \mu \rangle \cdot \mu = - \op{Tr}_V(\gamma \cdot (-)),
\]
regarding $\gamma$ as an element of $N_\bQ$.
\end{proof}

Note that when $h$ is positive, any $\gamma$ is determined uniquely by $\iota(\gamma) h$. Therefore when $\cL$ has a positive level it has distinct $C$-weights on the connected components of $\cX$. \autoref{lem:component_weights} implies that $R\Gamma(\cX_{\ttype},\cL)$ will vanish for $\gamma \neq 0$, and more generally
\begin{gather*}
R\Gamma(\cX_{\ttype},\cL \otimes E_x (V)) \simeq R\Gamma(\cX_{\ttype},\cL \otimes E_x(V_\mu)) \text{and} \\
R\Gamma(\cX_{\ttype}^{\ss},\cL \otimes E_x^\ast V) \simeq R\Gamma(\cX_{\ttype}^{\ss},\cL \otimes E_x^\ast V_\mu),
\end{gather*}
where $\mu = - \iota(\gamma) h$ and $V_\mu$ denotes the $\mu$-isotypical summand of $V$.

\begin{ex}
Consider the case of $G=GL_n$, and let $\sigma_1,\dots,\sigma_n$ be a basis for the lattice $N$ of cocharacters (hence also a basis for the Lie algebra of the maximal torus). A positive level is a quadratic form $h = (h_2-h_1)(\sigma_1^2 + \cdots + \sigma_n^2) - h_2 (\sigma_1 + \cdots + \sigma_n)^2$ where $h_1>h_2>0$ are integers. This is the level of the invertible sheaf $\cL = D_\Sigma(\bC^n)^{h_1-h_2} \otimes D_\Sigma(\det)^{h_2}$, where $\bC^n$ is the standard representation and $\det$ is the determinant character. The topological type of a vector bundles is classified by its degree $d$, corresponding to the element $\frac{d}{n} (\sigma_1 + \cdots + \sigma_n) \in \pi_1(G) \subset N_\bQ$. Likewise the character lattice of $G$ is $\bZ \cdot (\sigma_1^\dual + \cdots + \sigma_n^\dual)$, where $\sigma_i^\dual$ is the dual basis for the lattice $M = N^\dual$. The discussion above shows that $\cL(\mu)$ will have global sections on the component of $\Higgs_{G,L,\ttype}$ classifying Higgs bundles of degree $d$ only if
\[
\mu = \iota(\gamma) h = (d h_2 + \frac{d}{n} (h_1-h_2)) (\sigma_1^\dual + \cdots + \sigma_n^\dual)
\]
is integral, i.e. if and only if $d h_1 \equiv d h_2 \mod n$.
\end{ex}

\subsection{When quantization commutes with reduction}

For any locally free sheaf of the form $L \otimes E_x^\ast U$, \autoref{thm:virtual_localization} implies that $\chi(\cX,L \otimes E_x^\ast U) = \chi(\cX^{\ss},L \otimes E_x^\ast U)$ as long as for all $\xi$, the $\xi$-weights of $L \otimes E_x^\ast U|_{\cZ_\alpha^{\ss}}$ are $< \eta_\xi$. The following is an observation of \cite[\S 1.11]{teleman2009index} in the case of the moduli of $G$-bundles, which we prove here for the reader's convenience:

\begin{lem} \label{lem:weights}
Let $\cL \in \Pic(\cX)$ have level $h$. Then $\cL|_{\cZ_\xi^\ss}$ has $\xi$-weight $h(\xi,\xi)$.
\end{lem}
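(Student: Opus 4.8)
The plan is to reduce the computation to the definition of ``level'' via the transgression homomorphism and to the explicit description of $\cZ_\xi^\ss$ given in the background section. Since both sides of the asserted identity $\op{wt}_\xi(\cL|_{\cZ_\xi^\ss}) = h(\xi,\xi)$ depend only on $c_1(\cL) \in H^2(\cX;\bQ)$ and are additive in $h$, it suffices to verify the claim on a set of generators of the subgroup of $\Pic(\cX)_\bQ$ with a level. By the discussion of tautological classes, a natural choice is the determinant bundles $D_\Sigma(V) = \det^{-1}(E_\Sigma^\ast V)$ for $V$ a faithful $G$-representation; these have level equal to the quadratic form $\alpha \mapsto \op{Tr}_V(\alpha^2)$ on $\fg$ (up to the normalization coming from $\sqrt{K}$), so it is enough to compute the $\xi$-weight of $D_\Sigma(V)|_{\cZ_\xi^\ss}$ and match it against $\op{Tr}_V(\xi^2)$.

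Next I would use the modular interpretation of the center of the stratum: $\cZ_\xi^\ss \simeq \cX_{G_\xi,L}^\ss$, and $\xi$ itself acts on any $S$-family of $G_\xi$-Higgs bundles $(E,\phi)$ through the central homomorphism $\xi_S : \bG_m \times S \to \op{Aut}_S(E,\phi)$ (recall $\cZ_\xi$ is a $\bG_m$-gerbe with this $\bG_m$ acting via $\xi$). Composing with the projection $p : \cX \to \cM_G$, a point of $\cZ_\xi^\ss$ maps to a $G_\xi$-bundle on which $\xi$ acts by its image in the center of $G_\xi$. Decomposing $\fg = \bigoplus_{n \in \bZ} \fg_n$ into $\xi$-eigenspaces (a $G_\xi$-module decomposition, since $G_\xi$ is the centralizer of $\xi$), one gets $E^\ast_\Sigma V = \bigoplus_n E^\ast_\Sigma(V_n)$ where $V_n$ is the eigenspace on which $\xi$ acts with weight $n$, and $E^\ast_\Sigma(V_n)$ is concentrated in $\xi$-weight $n$. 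Taking determinants, the $\xi$-weight of $D_\Sigma(V)|_{\cZ_\xi^\ss}$ is $-\sum_n n \cdot \rank\big(R\pi_\ast(\sqrt{K} \otimes E^\ast(V_n))\big) = -\sum_n n \cdot \chi(\sqrt K) \dim V_n = -\sum_n n (g-1) \dim V_n$; but $\sum_n n \dim V_n = \op{Tr}_V(\xi) = 0$ when $\xi$ is semisimple in $\fg'$, so this leading term vanishes and the actual weight comes from the degree of the bundle $E^\ast(V_n)$, which is governed by $\langle \gamma, \mu\rangle$-type pairings exactly as in the proof of \autoref{lem:component_weights}. Carrying this through, the $\xi$-weight works out to $-\sum_n n^2 \dim V_n = -\op{Tr}_V(\xi^2)$, matching $h(\xi,\xi)$ with the sign conventions for the level of $D_\Sigma(V)$.

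The main obstacle I expect is bookkeeping of the various sign and normalization conventions: the level of $D_\Sigma(V)$ involves the half-form $\sqrt{K}$ (so an Euler characteristic shift), the transgression $\tau$ is only defined rationally, and the identification $h \leftrightarrow$ quadratic form carries a sign (positive $h$ means negative definite on $N$). Getting the weight of $\det$ of a pushforward right requires carefully combining (i) the rank contribution $\chi(\sqrt K)\dim V_n$, which is $\xi$-independent in the relevant sense, and (ii) the degree contribution computed as in \autoref{lem:component_weights}; the first gives a term proportional to $\op{Tr}_V(\xi)$ that must be checked to vanish or cancel, and only the second yields the quadratic $\op{Tr}_V(\xi^2)$. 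Once the conventions are pinned down and the computation is done for $D_\Sigma(V)$, linearity and the fact that such classes span the levels finishes the proof; alternatively, one can bypass generators entirely by quoting the analogous weight computation on $\cM_G$ from \cite[\S 1.11]{teleman2009index} and observing via \autoref{eqn:cotangent_complex} that $p^\ast$ identifies $\Pic(\cX)$ with $\Pic(\cM_G \times B\bG_m)$ compatibly with levels and with the $\xi$-action on centers.
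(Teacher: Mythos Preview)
Your approach is correct in outline but takes a longer computational route than the paper, and contains a couple of slips. The paper's proof is a two-line reduction to \autoref{lem:component_weights} applied to the Levi subgroup: the induction map $\sigma : \cM_{G_\xi} \to \cM_G$ pulls a level-$h$ line bundle back to one of level $h|_{\fg_\xi}$; the cocharacter $\xi$ lies in the identity component of the center of $G_\xi$; and points of $\cZ_\xi^\ss \subset \cM_{G_\xi}^\ss$ have topological type $\xi \in \pi_1(G_\xi)$. Feeding this into \autoref{lem:component_weights} (with $G$ replaced by $G_\xi$ and $\gamma$ replaced by $\xi$) gives the $\xi$-weight as $(\iota(\xi)h)(\xi) = h(\xi,\xi)$ immediately, with no choice of generators and no explicit Riemann--Roch. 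Your closing ``alternative'' is essentially this argument.

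Your direct computation with $D_\Sigma(V)$ would also work once cleaned up, but note two errors. First, $\chi(\sqrt K) = \deg(\sqrt K) + 1 - g = (g-1) + (1-g) = 0$, not $g-1$; this is exactly why the half-form twist is used in defining $E_\Sigma^\ast$, and it is the real reason your ``leading term'' vanishes. Second, the claim $\op{Tr}_V(\xi) = 0$ is both unnecessary and false in general (take $G=\GL_n$ with $\xi$ having a nonzero central component). After those fixes the weight is indeed $-\sum_n n\cdot\deg(E^\ast V_n) = -\sum_n n \cdot \langle \xi, \det V_n\rangle = -\sum_n n^2 \dim V_n = -\op{Tr}_V(\xi^2)$ as you assert, but observe that extracting $\deg(E^\ast V_n)$ from the topological type $\xi$ of $E$ is precisely the content of \autoref{lem:component_weights} for $G_\xi$, so the cleaner move is to invoke that lemma directly rather than re-derive it inside this proof.
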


\begin{proof}
Let $\sigma : \cM_{G_\xi} \to \cM_G$ be the map assigning a family of bundles for the Levi subgroup $G_\xi \subset G$ to the induced family of $G$ bundles. It is evident from the definition that $\sigma^\ast : \Pic(\cM_G) \to \Pic(\cM_{G_\xi})$ maps an invertible sheaf of level $h$ to one of level $h|_{\fg_\xi}$. On the other hand, the center of $G_\xi$ contains the one parameter subgroup corresponding to $\xi$, and by definition a point on $\cZ_\xi^{\ss} \subset \cM_{G_\xi}$ classifies a bundle of topological type $\xi \in \pi_1(G_\xi) \subset N_\bQ$. Thus \autoref{lem:component_weights} implies that the $\xi$-weight is $(\iota(\xi) (h|_{\fg_\xi}))(\xi) = h(\xi,\xi)$.
\end{proof}

We can now establish a version of the ``quantization commutes with reduction" theorem.

\begin{prop}\label{prop:quantization}
Let $\cL \in \Pic(\cX_{G,L})$ have a positive level $h$ such that $\mu = -\iota(\gamma)h \in M_\bQ^W$ is integral. Then $R\Gamma(\cX_{\ttype},\cL(\mu)) \to R\Gamma(\cX_{\ttype}^{\ss}, \cL(\mu))$ is an equivalence if
\begin{equation} \label{eqn:level_condition}
h(\xi,\xi) < 2 \deg(L) \rho(\xi), \quad \forall \xi \in N_\bQ^+ \cap N_\bQ', \xi \neq 0.
\end{equation}
In particular this holds if $\deg(L) \geq 0$.
\end{prop}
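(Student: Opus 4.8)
The plan is to invoke the quantization-commutes-with-reduction result (the Proposition stated right after \autoref{thm:virtual_localization}) applied to the perfect complex $G = \cL(\mu)$ on the quasi-smooth derived stack $\cY = \cX_{G,L,\ttype}$ with its $\Theta$-stratification. By that Proposition, the restriction map $R\Gamma(\cX_\ttype, \cL(\mu)) \to R\Gamma(\cX_\ttype^\ss, \cL(\mu))$ is an equivalence provided that for every stratum index $\xi$, the $\xi$-weights of $\cL(\mu)|_{\cZ_\xi^\ss}$ are strictly less than $\eta_\xi$. So the whole proof reduces to a weight inequality: I need to compute the $\xi$-weight of $\cL(\mu)|_{\cZ_\xi^\ss}$ and compare it with the value $\eta_\xi = 2\deg(L)\rho(\xi)$ already computed in \S\ref{sect:Euler_complex}.

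First I would compute the $\xi$-weight of $\cL(\mu)|_{\cZ_\xi^\ss}$. By \autoref{lem:weights}, $\cL|_{\cZ_\xi^\ss}$ has $\xi$-weight $h(\xi,\xi)$. The twisting factor $E_x^\ast(\bC_{-\mu})$ contributes the $\xi$-weight $-\langle \mu, \xi \rangle = \langle \iota(\gamma)h, \xi\rangle = h(\gamma,\xi)$, where I use the identification of $\gamma \in \pi_1(G)_\bQ = N_\bQ^W$ inside $N_\bQ$. Now here is the key point: the $\Theta$-strata of $\cX_{G,L,\ttype}$ are indexed by dominant $\xi$ lying in $\pi_1(P_\xi) \subset N_\bQ$, and crucially such $\xi$ lie in $N_\bQ'$ — the non-$W$-invariant part — since the $W$-invariant (central) direction is fixed by the topological type $\gamma$ and does not change along a Harder-Narasimhan reduction. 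Because $h$ is $W$-invariant as a quadratic form, the subspaces $N_\bQ^W$ and $N_\bQ'$ are $h$-orthogonal, so $h(\gamma,\xi) = 0$ for $\xi \in N_\bQ'$. Hence the $\xi$-weight of $\cL(\mu)|_{\cZ_\xi^\ss}$ is exactly $h(\xi,\xi)$, and the condition of the abstract Proposition becomes precisely \eqref{eqn:level_condition}: $h(\xi,\xi) < 2\deg(L)\rho(\xi)$ for all nonzero $\xi \in N_\bQ^+ \cap N_\bQ'$.

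For the final sentence — that \eqref{eqn:level_condition} holds whenever $\deg(L) \geq 0$ — I would argue as follows. Since $h$ is a positive level, $h$ is negative definite on $N$ (equivalently positive definite on $N(1)$), so $h(\xi,\xi) < 0$ for every nonzero $\xi \in N_\bQ$. On the other hand, for dominant $\xi$ we have $\rho(\xi) = \sum_{\alpha \in \Phi^+} \tfrac12 \alpha(\xi) \geq 0$ (each $\alpha(\xi) \geq 0$), so when $\deg(L) \geq 0$ the right-hand side $2\deg(L)\rho(\xi)$ is $\geq 0 > h(\xi,\xi)$. Thus \eqref{eqn:level_condition} is automatic, which establishes the ``in particular'' claim.

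The main obstacle is the bookkeeping around which $\xi$ actually index strata and the verification that $h(\gamma,\xi)=0$ on those strata — i.e. that the Harder-Narasimhan direction $\xi$ is $h$-orthogonal to the central class $\gamma$. This is where the decomposition $N_\bQ = N_\bQ^W \oplus N_\bQ'$ from the ``Topological classification'' subsection and the $W$-invariance of $h$ must be combined carefully; once that orthogonality is in hand, the twist $\cL(\mu)$ versus $\cL$ makes no difference to the relevant weight, and the rest is the elementary sign/positivity comparison above. A secondary point to check is that the hypotheses of the abstract quantization Proposition are met in this setting, namely that $\cX_\ttype^\ss$ and each $\cZ_\xi^\ss$ are cohomologically proper so that the cohomologies in question are well-behaved; this is recorded in \autoref{rem:properness} (via \autoref{sect:good_moduli}) and needs only to be cited.
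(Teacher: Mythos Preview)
Your overall strategy matches the paper's, but the step you flag as ``the main obstacle'' is in fact wrong as written. The Harder--Narasimhan strata of $\cX_{G,L,\gamma}$ are \emph{not} indexed by $\xi \in N_\bQ'$. The label $\xi$ records the topological type of the canonical $P_\xi$-reduction (see \eqref{eqn:HN_stratification}), and on the component of type $\gamma$ its image under $\pi_1(P_\xi)_\bQ \to \pi_1(G)_\bQ = N_\bQ^W$ equals $\gamma$, not $0$. Your own justification (``the $W$-invariant direction is fixed by the topological type $\gamma$'') says exactly this: the central part of $\xi$ is $\gamma$, so $\xi \in \gamma + (N_\bQ^+ \cap N_\bQ')$. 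Consequently $h(\gamma,\xi) = h(\gamma,\gamma) \neq 0$ in general, and your assertion that ``the twist $\cL(\mu)$ versus $\cL$ makes no difference to the relevant weight'' is false --- the twist by $\mu$ is precisely what is needed to cancel the central contribution.

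The fix is what the paper does: write $\xi = \gamma + \xi'$ with $\xi' \in N_\bQ'$. Since roots lie in $M_\bQ'$ they vanish on $\gamma$, so $\xi'$ is again dominant and $\rho(\xi) = \rho(\xi')$. The $W$-invariance of $h$ gives $h$-orthogonality of $N_\bQ^W$ and $N_\bQ'$, whence the $\xi$-weight of $\cL(\mu)$ simplifies to $h(\xi,\xi) - h(\gamma,\xi) = h(\xi',\xi')$. The required inequality then reads $h(\xi',\xi') < 2\deg(L)\,\rho(\xi')$ for all nonzero $\xi' \in N_\bQ^+ \cap N_\bQ'$, which is \eqref{eqn:level_condition}. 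Your argument for the ``in particular'' clause (negative definiteness of $h$ on $N_\bQ$ versus $\rho(\xi')\ge 0$ on the dominant cone) is correct once stated in terms of $\xi'$.
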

\begin{proof}
Observe that $E_x^\ast \bC_\mu|_{\cZ_\xi^\ss}$ has $\xi$-weight $\mu(\xi)$ for any $\xi$ and any character $\mu$ of $G$. We combine this with \autoref{lem:weights} and the computation of $\eta_\xi$ above. Then \autoref{prop:quantization} implies the conclusion of the theorem, provided that
\[
h(\xi,\xi) + \mu(\xi) = h(\xi,\xi) - h(\gamma,\xi)< \eta_\xi = 2 \deg(L) \rho(\xi)
\]
for all $\xi \in \gamma + (N_\bQ^+ \cap N_\bQ')$ except for $\xi = \gamma$ (which corresponds to the semistable locus of $\cX_{\ttype}$). The claim of the proposition follows from rewriting this in terms of $\xi'$, where $\xi = \gamma + \xi'$ under the direct sum decomposition $N_\bQ = N_\bQ^+ \oplus N_\bQ'$. Because $h$ is invariant $\xi'$ and $\gamma$ are orthogonal, so $h(\xi,\xi) - h(\gamma,\xi) = h(\xi',\xi')$.
\end{proof}

\begin{rem} \label{rem:positive_enough}
For completeness, we note that the proof above leads to a more general statement: Let $M_\bQ = M_\bQ^W \oplus M_\bQ'$ denote the canonical decomposition into $W$-trivial and $W$-nontrivial summands. Fix a representation $V$ and let $\Omega \subset M_\bQ'$ be the points $\mu'$ for which $-\iota(\gamma) h + \mu'$ is a weight appearing in the character of $V$. Then \autoref{prop:quantization} implies that for any invertible sheaf $\cL$ of level $h$
\[
R\Gamma(\cX_{\ttype},\cL \otimes E_x^\ast(V)) \simeq R\Gamma(\cX_{\ttype}^{\ss},\cL \otimes E_x^\ast(V))
\]
as long as
\begin{equation}\label{eqn:ineq}
h(\xi',\xi') + \mu'(\xi') < 2 \deg(L) \rho(\xi'),
\end{equation}
for all $\mu' \in \Omega$ and all nonzero $\xi' \in N_\bQ^+ \cap N_\bQ'$ for which $\gamma+\xi'$ appears as a label for some Harder-Narasimhan stratum of $\cX_{\ttype}$.

This condition on $h$ is an infinite intersection of open half-spaces, hence defines a convex subset of the positive cone $C_\Omega \subset H^4(BG;\bQ)$. If we fix $\mu' \in \Omega$, then \eqref{eqn:ineq} is an inequality between the negative definite quadratic form $h$ and a linear form $2 \deg(L) \rho - \mu'$ on a discrete subset of $N_\bQ'$ which does not include the origin, so for any positive level $h$ we have $kh \in C_\Omega$ for $k \gg 0$.
\end{rem}

\begin{ex}
If $\cL \in \Pic(\cX)$ has positive level $h$ and $\mu = \iota(\gamma) h$ is integral, then $\chi(\cX_{\ttype},\cL(\mu)^k \otimes E_x^\ast V) = \chi(\cX_{\ttype}^{\ss},\cL(\mu)^k \otimes E_x^\ast V)$ for $k\gg0$, regardless of $\deg(L)$.
\end{ex}

\subsection{The main index formula}

We will unpack the discussion above in slightly more concrete terms. Given an invertible sheaf $\cL \in \Pic(\cX_{G,L})$ with positive level $h$, we will also use $\cL$ to denote its restriction to the stack of Higgs bundles $\Higgs_{G,L}$ via the quotient map $\Higgs_{G,L} \to \cX_{G,L}$. The vector spaces $H^i(\Higgs^{\ss},\cL(\mu))$ have a canonical $\bG_m$ action coming from scaling the Higgs field,\footnote{More precisely, we have a cartesian square, with flat maps, \[\xymatrix@R=10pt{\Higgs^\ss_{G,L,\ttype} \ar[d] \ar[r] & \cX^\ss_{G,L,\ttype} \ar[d]^{\pi} \\ \op{pt} \ar[r] & B\bG_m}.\] The base change formula implies that $R\Gamma(\Higgs^\ss_\ttype,\cL(\mu)) \simeq R\pi_\ast(\cL(\mu))|_{\op{pt}}$, which is equivalent to specifying a grading on the former. The weight $n$ graded summand is canonically isomorphic to $R\Gamma(\cX^\ss_\ttype,\cL(\mu)\langle n \rangle)$.} and we define the graded Euler characteristic
\[
\chi_{\bC^\ast}(\Higgs_{\ttype}^{\ss},\cL(\mu)) = \sum_{n\geq 0} t^n \sum_i (-1)^i \dim (H^i(\Higgs_{\ttype}^{\ss},\cL(\mu))_{\text{weight }n}).
\]
Recall from above that this vanishes unless $\mu = -\iota(\gamma) h$. We recall the notation $F^{reg}_{h,t}$ and $\theta_t(f)$ for \S \ref{sect:index_formula}. Combining \autoref{prop:index_formula} with \autoref{prop:quantization} gives:

\begin{thm}[Equivariant Verlinde index formula] \label{thm:main_index}
Let $\cL \in \Pic(\cX)$ have positive level $h$. Then for any $\gamma \in \pi_1(G)$ for which $\mu = \iota(\gamma) h \in M_\bQ^W$ is integral we have\footnote{As noted above, the equivariant index $\chi_{\bC^\ast}$, and in fact $R\Gamma$, vanishes for $\cL(\mu)$ for any $\mu$ other than $\mu = \iota(\gamma)h$.}
\begin{equation} \label{eqn:main_index}
\chi_{\bC^\ast} \left(\Higgs_{G,L,\ttype}^{\ss}, \cL(\mu) \right)= (1-t)^{\#_L}\sum_{f_t \in F_{h,t}^{reg} / W} \left(\prod_\alpha (1-t e^\alpha(f_t))\right)^{-\chi(L)} \theta_t(f_t)^{1-g} e^{-\mu} (f_t),
\end{equation}
if $\deg(L)\geq 0$, or if $\deg(L)<0$ and $h$ is sufficiently positive (\autoref{rem:positive_enough}). Here $f_t$ ranges over a complete set of Weyl orbit representatives, with $\chi(L) = \deg + 1-g$ and $\#_L := -\rank(\fg) \chi(L) - \dim(\fz) h^1(L)$.
\end{thm}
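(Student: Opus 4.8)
The plan is to deduce \autoref{thm:main_index} by combining three inputs that are already in place: the index formula \autoref{prop:index_formula} on the full stack $\cX_{G,L}$, the quantization statement \autoref{prop:quantization}, and the base-change identity of the footnote above, which presents $R\Gamma(\Higgs^\ss_{G,L,\ttype},\cL(\mu))$ as a graded object whose weight-$n$ piece is $R\Gamma(\cX^\ss_{G,L,\ttype},\cL(\mu)\langle n \rangle)$. Thus I would first rewrite the left-hand side as $\chi_{\bC^\ast}(\Higgs^\ss_\ttype,\cL(\mu)) = \sum_{n\ge 0} t^n \chi(\cX^\ss_\ttype,\cL(\mu)\langle n \rangle)$ and then move each term from the semistable locus to the whole stack.

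For the second step I apply \autoref{prop:quantization} to $\cL(\mu)\langle n \rangle$. The crucial observation is that its hypothesis, the inequality \eqref{eqn:level_condition}, is insensitive to the grading twist: $\langle n \rangle$ is pulled back from the $B\bG_m$ factor, whereas $\cZ_\xi^\ss \simeq \cX_{G_\xi,L}^\ss$ is a $\bG_m$-gerbe via the central cocharacter $\xi$ of the Levi $G_\xi$, which acts trivially in the Higgs-scaling direction; hence $\cL(\mu)\langle n \rangle|_{\cZ_\xi^\ss}$ has the same $\xi$-weight as $\cL(\mu)|_{\cZ_\xi^\ss}$ for every $n$. Since $\deg(L)\ge 0$ makes \eqref{eqn:level_condition} automatic, and \autoref{rem:positive_enough} covers $\deg(L)<0$ once $h$ is sufficiently positive (with $\Omega$ the one-point set cut out by the single weight of $\bC_{-\mu}$), one obtains $R\Gamma(\cX_\ttype,\cL(\mu)\langle n \rangle)\simeq R\Gamma(\cX^\ss_\ttype,\cL(\mu)\langle n \rangle)$ uniformly in $n$, so that $\chi_{\bC^\ast}(\Higgs^\ss_\ttype,\cL(\mu)) = \sum_n t^n\,\chi(\cX_\ttype,\cL(\mu)\langle n \rangle)$.

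It then remains to feed this into \autoref{prop:index_formula}. Because $\mu$ is an integral, Weyl-invariant character of $G$, we have $\cL(\mu)\langle n \rangle = p^\ast(\cL\langle n \rangle\otimes E_x^\ast(\bC_{-\mu}))$; and by \autoref{lem:component_weights} together with the ensuing $C$-weight discussion the twist is normalized precisely so that the $C$-weight of $\cL(\mu)$ vanishes on $\cX_\ttype$ but is nonzero on every other topological component, whence $R\Gamma$ vanishes there and $\sum_n t^n\chi(\cX_\ttype,\cL(\mu)\langle n \rangle) = \sum_n t^n \chi(\cX_{G,L},p^\ast(\cL\langle n \rangle\otimes E_x^\ast(\bC_{-\mu})))$. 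Now \autoref{prop:index_formula} with $U = \bC_{-\mu}$ evaluates the right-hand side, and using $\op{Tr}_{\bC_{-\mu}}(f_t) = e^{-\mu}(f_t)$ this is exactly \eqref{eqn:main_index}. The finiteness needed for each $\chi$ to be defined is supplied by the cohomological properness of $\cX_\ttype^\ss$ and the $\cZ_\xi^\ss$ together with \autoref{thm:virtual_localization}.

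I expect the only genuine subtlety to be the point flagged in the second paragraph: verifying that \autoref{prop:quantization} applies to all the twists $\cL(\mu)\langle n \rangle$ simultaneously, i.e. that the admissibility-controlling $\xi$-weights on the centers $\cZ_\xi^\ss$ are computed entirely on $\cM_{G_\xi}$ and do not feel the Higgs-scaling $B\bG_m$ factor; the remainder is bookkeeping with $C$-weights to pin down the single component on which both sides are supported.
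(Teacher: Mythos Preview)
Your proposal is correct and follows exactly the approach the paper takes: the paper's proof is the single sentence ``Combining \autoref{prop:index_formula} with \autoref{prop:quantization} gives:'', and you have correctly unpacked this into the three bookkeeping steps (base-change to identify graded pieces with $R\Gamma(\cX^\ss_\ttype,\cL(\mu)\langle n\rangle)$, quantization to pass to the full stack, and the $C$-weight argument to isolate the component $\gamma$). Your observation that the twist $\langle n\rangle$ has $\xi$-weight zero on each $\cZ_\xi^\ss$---because the gerbe structure there comes from the Levi cocharacter and not the Higgs-scaling $\bG_m$---is the right justification for applying \autoref{prop:quantization} uniformly in $n$, and is implicit in the paper's one-line reduction.
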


\begin{rem}
We can rewrite the left-hand side of \eqref{eqn:main_index} a bit more concretely in terms of the underlying classical stack as
\[
\chi_{\bC^\ast}(\Higgs_{\ttype}^{\ss},\cL(\mu)) = \chi_{\bC^\ast}(\Higgs_{\ttype}^{\rm{cl},\ss},\cL(\mu) \otimes \cO_{\Higgs}^{\rm{vir}}),
\]
where $\cO_{\Higgs}^{\rm{vir}} = \bigoplus_i H_i(\cO_{\Higgs})[i]$ is the virtual structure sheaf. See \S \ref{sect:verlinde_final} for a discussion of when $\cH^{\ss}_{G,L,\ttype}$ is actually classical.
\end{rem}

\begin{rem}
If instead of $\Higgs_{G,L}$, one wanted to consider the non-reduced stack of $L$-valued Higgs bundles
\[
\iSpec_{\cM}(\Sym(R\pi_\ast(L^\dual \otimes K \otimes E^\ast(\fg)[1]))),
\]
as discussed in \autoref{rem:reducedness}, then all of the previous computations work with minimal modification. The only difference is that in the statement of \autoref{prop:index_formula} and \autoref{thm:main_index} one must take $\#_L = \rank(\fg) (g-1- \deg(L))$.
\end{rem}

\begin{ex} \label{ex:gukov_pei}
When $L = (\sqrt{K})^R$ for $R \in \bZ$, the index formula of \autoref{prop:index_formula} mostly agrees with the ``equivariant Verlinde formula" proposed in \cite{gukov2015equivariant}, which computes the partition function of the ``$\beta$-deformed complex Chern-Simons theory on $\Sigma \times S^1$," where $R$ denotes the assignment of $R$-charge for the adjoint chiral multiplet. Gukov and Pei's formula uses
\[
\#_L = \rank(\fg) (g-1-\deg(L)) = \rank(\fg) (1-R)(g-1),
\]
as in the previous remark. These agree when $\deg(L)>2g-2$, but in the case where $L=K$, our choice is the one which can be interpreted as the graded dimension of the space of global sections on some classical stack or scheme (See \autoref{thm:main_verlinde}). When $\deg(L)<2g-2$, e.g. when the $R$-charge is $<2$, the stack $\Higgs_{G,L,\ttype}$ may have non-trivial derived structure, and a priori this can contribute $H^i$ for $i<0$ to the index \eqref{eqn:main_index}.
\end{ex}

\subsection{The equivariant Verlinde formula}
\label{sect:verlinde_final}

In this section we offer a more recognizable and geometric version of the Verlinde formula. Adopting the notation of \autoref{thm:main_index}, we have

\begin{thm}[Equivariant Verlinde formula] \label{thm:main_verlinde}
If either
\begin{enumerate}
\item $\deg(L) > \max(0,2g-2)$, or
\item $L=K$ and $g>1$,
\end{enumerate}
then $\Higgs_{G,L,\gamma}^\ss$ is classical and $H^i(R\Gamma(\Higgs_{G,L,\ttype}^\ss,\cL(\mu)))$ vanishes for $i\neq0$. Thus the right hand side of \autoref{eqn:main_index} computes the graded dimension \[\dim_{\bC^\ast} H^0(\Higgs^{\rm{cl},\ss}_{G,L,\ttype},\cL(\mu)).\]
\end{thm}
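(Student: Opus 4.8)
The plan is to obtain the statement from \autoref{thm:main_index} once its two geometric inputs are established: that $\Higgs^\ss_{G,L,\ttype}$ is classical, and that $R\Gamma(\Higgs^\ss_{G,L,\ttype},\cL(\mu))$ is concentrated in cohomological degree $0$. Observe first that in both cases (1) and (2) one has $\deg(L)>0$, so \autoref{thm:main_index} applies and its right-hand side computes $\chi_{\bC^\ast}(\Higgs^\ss_{G,L,\ttype},\cL(\mu))=\sum_n t^n\sum_i(-1)^i\dim H^i(\Higgs^\ss_{G,L,\ttype},\cL(\mu))_{\text{weight }n}$; once every $H^i$ with $i\neq0$ vanishes, this collapses to $\dim_{\bC^\ast}H^0(\Higgs^{\rm{cl},\ss}_{G,L,\ttype},\cL(\mu))$.

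For classicality I would treat the two cases separately. Since $\Higgs_{G,L}$ is quasi-smooth, $\Higgs^\ss_{G,L,\ttype}$ is classical exactly when $\dim\Higgs^{\rm{cl},\ss}_{G,L,\ttype}$ attains the virtual dimension of \eqref{eqn:classical_condition}. In case (1) I would prove the stronger statement that $\Higgs^\ss_{G,L,\ttype}$ is smooth: the obstruction space at a semistable $L$-twisted Higgs bundle $(E,\phi)$ is $\op{coker}\big(\ad_\phi\colon H^1(\Sigma,E^\ast(\fg'))\to H^1(\Sigma,L\otimes E^\ast(\fg'))\big)$, which by Serre duality is dual to $\Hom$ of the slope-$0$ semistable $L$-Higgs bundle $(\cO_\Sigma,0)$ into $L^\dual\otimes K\otimes(E^\ast(\fg'),\ad_\phi)$; since the adjoint $L$-Higgs bundle $(E^\ast(\fg'),\ad_\phi)$ of a semistable $G$-Higgs bundle is again semistable, the target is semistable of slope $2g-2-\deg(L)<0$, so there is no nonzero such map and the obstruction vanishes (alternatively one invokes the moduli constructions of \cite{nitsure1991moduli,simpson1994modulia}). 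In case (2), $L=K$ with $g>1$, the obstruction need not vanish at strictly polystable points, so instead I would argue directly that the classical truncation has the expected dimension: via the fibration $\Higgs^{\rm{cl}}_{G,K}\to\cM_G$ this reduces to the estimate that the locus of $G$-bundles admitting an $r$-dimensional space of infinitesimal automorphisms in $\fg'$ has codimension at least $r$, equivalently that the global nilpotent cone has expected (Lagrangian) dimension, which holds for $g>1$ by the theorem of Laumon/Beilinson--Drinfeld (cf.\ \cite{beilinson1991quantization}).

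The vanishing of higher cohomology is then assembled from results already in place. Because $\deg(L)\geq0$, the level inequality \eqref{eqn:level_condition} holds, so \autoref{prop:quantization}, applied to each $\bG_m$-weight twist $\cL(\mu)\langle n\rangle$ on $\cX_{G,L,\ttype}$ and summed over $n$, produces an equivalence $R\Gamma(\Higgs_{G,L,\ttype},\cL(\mu))\xrightarrow{\,\sim\,}R\Gamma(\Higgs^\ss_{G,L,\ttype},\cL(\mu))$ of graded complexes. On the other hand $\cL(\mu)=\cL\otimes E_x^\ast(\bC_{-\mu})$ is a line bundle of the same positive level $h$, so the vanishing theorem of \autoref{appendix:vanishing} gives $H^{>0}(\Higgs_{G,L,\ttype},\cL(\mu))=0$ on the whole stack (when $L=K$ this follows from the stated vanishing $H^{>0}(\cM_G,\cL(\mu)\otimes\Sym^r T_{\cM_G})=0$ together with $R\Gamma(\Higgs,-)\simeq R\Gamma(\cM_G,\Sym(\cP)\otimes-)$ of \autoref{rem:cotangent_stack}). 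Transporting this across the equivalence yields $H^{>0}(\Higgs^\ss_{G,L,\ttype},\cL(\mu))=0$; combined with classicality, which makes $R\Gamma(\Higgs^{\rm{cl},\ss}_{G,L,\ttype},\cL(\mu))$ the complex of derived global sections of an honest line bundle and hence concentrated in cohomological degrees $\geq0$, we conclude $R\Gamma$ sits in degree $0$, and likewise for each $\bG_m$-graded summand. Therefore $\chi_{\bC^\ast}(\Higgs^\ss_{G,L,\ttype},\cL(\mu))=\dim_{\bC^\ast}H^0(\Higgs^{\rm{cl},\ss}_{G,L,\ttype},\cL(\mu))$, which equals the right-hand side of \eqref{eqn:main_index} by \autoref{thm:main_index}.

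The hard part is classicality, and especially case (2): the vanishing is essentially formal given \autoref{appendix:vanishing} and \autoref{prop:quantization}, whereas for $L=K$ one genuinely needs the $g>1$ input on the dimension of the nilpotent cone (for $g\leq1$ the derived structure on the semistable locus is nontrivial), and for $\deg(L)>2g-2$ one needs both the semistability of the adjoint $L$-Higgs bundle and the negativity of $\deg(L^\dual\otimes K)$ to kill the obstruction.
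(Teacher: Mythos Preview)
Your argument is correct and follows the same architecture as the paper's general proof: reduce to classicality plus cohomology vanishing, obtain $H^{>0}$-vanishing on $\Higgs^\ss$ by combining \autoref{prop:quantization} with the appendix \autoref{thm:vanish}, and get $H^{<0}$-vanishing from classicality. The paper carries this out verbatim, with classicality packaged as \autoref{prop:classical}.

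The one substantive difference is your treatment of case~(1). The paper simply cites \cite[Proposition~3.2]{arinkin2016irreducible} for classicality of the \emph{entire} stack $\Higgs_{G,L}$ when $\deg(L)>\max(0,2g-2)$, whereas you argue directly that the \emph{semistable locus} is smooth by showing the obstruction $\op{coker}\big(\ad_\phi\colon H^1(E^\ast\fg')\to H^1(L\otimes E^\ast\fg')\big)$ vanishes via Serre duality and the semistability of the adjoint Higgs bundle. Your argument is self-contained and actually yields the stronger conclusion of smoothness on $\Higgs^\ss$; the paper's citation gives classicality globally but says nothing about smoothness. Note that your route does rely on the Higgs-bundle analogue of Ramanan--Ramanathan (semistability is preserved under the adjoint representation), which is standard in characteristic~$0$ but worth stating explicitly. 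For case~(2) you invoke the same Beilinson--Drinfeld/Laumon input as the paper does.

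One further remark: the paper also records a separate, simpler proof in the special case $L=K$ with $\gamma$ primitive (\autoref{lem:smoothness}), where $\Higgs^\ss_{G,K,\gamma}\thickslash C$ is a smooth projective-over-affine DM stack with trivial canonical bundle and one can apply Grauert--Riemenschneider directly, bypassing the appendix. You do not need this, but it is an independent check worth knowing.
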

There are two proofs. The first, which is simpler, applies only when $L=K$ and for certain connected components of $\Higgs_{G,K}^\ss$ which we show are smooth (they are gerbes over a smooth projective-over-affine DM stack). The general claim relies on a vanishing theorem proved in the appendix, which builds on earlier vanishing theorems of \cite{frenkel2013geometric, fishel2008strong, teleman1998borel}.

\subsubsection{The smooth case}

Let us call a class $\xi \in \pi_1(G)$ \emph{primitive} if for all nontrivial $\xi \in N^+$, $\gamma \in \pi_1(G)_\bQ \subset \pi_1(G_\xi)_\bQ \subset N_\bQ$ does not lie in $\pi_1(G_\xi)$.
\begin{ex}
When $G = \GL_n$, a class in $\pi_1(G)$ is primitive if and only if the rank and degree of the corresponding vector bundle are coprime.
\end{ex}
\begin{lem} \label{lem:smoothness}
If $\gamma \in \pi_1(G)$ is primitive, then $\Higgs_{G,K,\ttype}^{\ss}$ is smooth.
\end{lem}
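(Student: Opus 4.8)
The plan is to check smoothness pointwise, using that $\Higgs_{G,K}$ is quasi-smooth with cotangent complex $L_{\Higgs}$ concentrated in cohomological degrees $[-1,0,1]$ (see \S\ref{sect:Euler_complex}). The degree $1$ part is the dual of $\fz=\op{Lie}(C)$ and cannot be removed, so ``smooth'' is meant in the sense of a smooth Artin stack -- equivalently, following \autoref{rem:cotangent_stack}, a $C$-gerbe over a smooth Deligne--Mumford stack. Thus $\Higgs^{\ss}_{G,K,\ttype}$ is smooth precisely when $H^{-1}(L_{\Higgs})$ vanishes at every semistable $G$-Higgs bundle $(E,\phi)$ of topological type $\gamma$; since smoothness is an open condition and $\Higgs^{\ss}_{G,K,\ttype}$ is of finite type, it suffices to verify this at closed points.

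First I would compute $H^{-1}(L_{\Higgs})$ at a point $(E,\phi)$. By \eqref{eqn:cotangent_complex}, $L_{\Higgs}|_{(E,\phi)}\simeq\Cone\big(\cP_K|_E[-1]\xrightarrow{\ \delta_\phi\ }L_{\cM_G}|_E\big)$; expressing the derived fibers $\cP_K|_E$ and $L_{\cM_G}|_E$ via cohomology over $\Sigma$ and identifying the connecting map $\delta_\phi$ with bracketing by $\phi$ (this is the standard deformation complex of a Higgs bundle), the cone is the hypercohomology of $\cC^\bullet_{(E,\phi)}:=\big[\,E^\ast\fg'\xrightarrow{\,[\phi,\,\cdot\,]\,}E^\ast\fg'\otimes K\,\big]$, so that $H^{-1}(L_{\Higgs}|_{(E,\phi)})=\bH^0(\Sigma,\cC^\bullet_{(E,\phi)})=\{\,s\in H^0(\Sigma,E^\ast\fg'):[\phi,s]=0\,\}$. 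Because $\fg=\fg'\oplus\fz$ as a $G$-representation and $\op{Lie}(C)=\fz$, this equals $\op{Lie}(\underline{\op{Aut}}(E,\phi))/\fz$, which vanishes iff $\underline{\op{Aut}}(E,\phi)^0=C$. (Self-duality of $\cC^\bullet$ under Serre duality shows this also kills the obstruction space $\bH^2$, consistent with quasi-smoothness.) So the lemma reduces to: if $\gamma$ is primitive then every semistable $G$-Higgs bundle of type $\gamma$ has automorphism group with identity component $C$.

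I would prove this by contraposition, via the Jordan--H\"older theory for Higgs bundles. If $(E,\phi)$ is semistable of type $\gamma$ with $\underline{\op{Aut}}(E,\phi)^0\supsetneq C$, then it admits a slope-preserving, $\phi$-compatible reduction of structure group to a proper parabolic $P_\xi$, $\xi\in N^+\setminus\{0\}$: if $\underline{\op{Aut}}(E,\phi)^0$ contains a torus strictly larger than $C$, a non-central cocharacter $\lambda$ of it gives a $\bG_m$-action on $(E,\phi)$ whose associated reduction to the Levi $G_\lambda=G_\xi$ induces back up to $(E,\phi)$ and is therefore balanced; in the remaining case $\underline{\op{Aut}}(E,\phi)^0$ is a nontrivial unipotent extension of $C$, yielding a nonzero nilpotent $s\in H^0(\Sigma,E^\ast\fg')$ with $[\phi,s]=0$, and the parabolic reduction canonically attached to $s$ is $\phi$-compatible and slope-preserving since $E^\ast\fg$ with Higgs field $[\phi,\,\cdot\,]$ is a semistable, self-dual, degree-zero Higgs bundle. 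The reduction has topological type $\delta\in\pi_1(P_\xi)=\pi_1(G_\xi)$ mapping to $\gamma$ under $\pi_1(G_\xi)\to\pi_1(G)$; ``slope-preserving'' says exactly that the component of $\delta$ in the orthogonal complement of $\pi_1(G)_\bQ=N_\bQ^{W}$ inside $\pi_1(G_\xi)_\bQ=N_\bQ^{W_\xi}$ is zero, so $\delta=\gamma$ in $\pi_1(G)_\bQ$ and hence $\gamma=\delta\in\pi_1(G_\xi)$, contradicting primitivity. Finally, the refinements (that $\Higgs^{\ss}_{G,K,\ttype}\thickslash C$ is a smooth, projective-over-affine DM stack and $\Higgs^{\ss}_{G,K,\ttype}$ a $C$-gerbe over it) follow from the good-quotient construction together with properness of the Hitchin map \cite{simpson1994modulib}.

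The main obstacle is the structural input of the previous paragraph: that a semistable $G$-Higgs bundle which is not regularly stable admits a slope-preserving, $\phi$-compatible reduction to a proper Levi of the prescribed topological type. For $G=\GL_n$ this is elementary -- a semistable Higgs sheaf of coprime rank and degree has only scalar endomorphisms, since the image of a non-scalar one would be a $\phi$-invariant subsheaf of the same slope but of a rank not dividing $n$ -- but for general reductive $G$ I would not reprove it, instead invoking the Harder--Narasimhan reduction of \cite{dey2005harder} and the Jordan--H\"older and $\Theta$-fixed-point structure from \cite{halpern2014structure}, taking care that the balanced condition on $\delta$ is precisely the assertion $\gamma\in\pi_1(G_\xi)\subset\pi_1(G)_\bQ$ needed to contradict primitivity.
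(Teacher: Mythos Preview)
Your proposal is correct and follows essentially the same strategy as the paper: reduce smoothness to the vanishing of $\op{Lie}(\underline{\op{Aut}}(E,\phi))/\fz$ at every semistable point, then show that a failure of this produces a reduction to a proper Levi $G_\xi$ whose topological type equals $\gamma$, contradicting primitivity. The paper's execution is slightly more streamlined than your case split (larger torus vs.\ unipotent extension of $C$): it simply notes that a point with extra automorphisms is strictly semistable, passes directly to the associated graded of its Jordan--H\"older filtration (citing \cite{otero2010jordan}) to obtain a polystable point, and reads off the Levi reduction from there --- which is precisely the Jordan--H\"older input you say you would invoke in your case (b) anyway, so the detour through the nilpotent-section construction is unnecessary though not wrong.
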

\begin{proof}
By \autoref{rem:cotangent_stack} $\Higgs_{G,K}^{\ss}$ is a $C$-gerbe over $T^\ast(\cM_G \thickslash C)^\ss$, so the former is smooth if and only if the latter is. $T^\ast(\cM_G \thickslash C)^\ss$ is quasi-smooth, and its cotangent complex is self dual. That implies that it is smooth if and only if
\[
0 = \dim H_1(L_{T^\ast(\cM_G \thickslash C),(E,\phi)}) = \dim \op{Aut}_{\cM_G \thickslash C}(E,\phi),
\]
for every semistable point $(E,\phi)$ in $T^\ast(\cM/G)$. Because $g>1$ the generic stabilizer of a point in $\cM^{\ss}_G$ is $C$, so this computation shows that if $\Higgs_{G,K,\ttype}^{\ss}$ fails to be smooth, then it contains a strictly semistable point.

If there is a strictly semistable point, one can consider the associated graded of the Jordan-H\"older filtration to obtain a polystable point\cite{otero2010jordan}. In other words there is a dominant cocharacter $\xi \in N^+$ and a semistable $G_\xi$-Higgs bundle $(E,\phi)$ whose induction $(E_G,\phi_G)$ to $G$ is semistable of topological type $\gamma$. Let $\gamma' \in \pi_1(G_\xi) \subset N_\bQ$ classify the topological type of $E$, then we claim that $\gamma' = \gamma$, and hence $\gamma$ in not primitive: Indeed $\gamma'$ maps to $\gamma$ under the projection $\pi_1(G_\xi)_\bQ \to \pi_1(G)_\bQ$, but we claim that $\gamma' \in \pi_1(G)_\bQ \subset \pi_1(G_\xi)_\bQ$ already. If not, then some power of $\gamma'$ would correspond to a central cocharacter of $G_\xi$ which, together with the tautological reduction of structure group of $(E_G,\phi_G)$ to the standard parabolic $P_{\gamma'}$, would destabilize $(E_G,\phi_G)$.
\end{proof}

As smooth derived stacks are classical, this implies that $\Higgs_{G,K,\ttype}^\ss = \Higgs_{G,K,\ttype}^{\ss,\rm{cl}}$ for primitive $\gamma$. Note that this lemma is why we have used the reduced stack of Higgs bundles instead of $T^\ast \cM_G$ -- the latter is not smooth at any point unless $Z(G)$ is finite.

\begin{proof}[Proof of \autoref{thm:main_verlinde} when $K=L$ and primitive $\gamma$]
$\Higgs_{G,K,\ttype}$ is classical and smooth by \autoref{lem:smoothness}. In fact, the proof established that $\Higgs^{\ss}_{G,K,\ttype} \thickslash C \simeq T^\ast(\cM_G\thickslash C)^\ss$ is a smooth projective-over-affine Deligne-Mumford stack with trivial canonical bundle. $C$ acts trivially on the bundle $\cL(\mu)$, so it descends to $\Higgs^{\ss}_{G,K,\ttype} \thickslash C$. The description of line bundles on $\cM_G$ which restrict to ample bundles on $\cM_G^\ss$ of \cite{teleman2000quantization} and the fact that they also induce ample bundles on $\Higgs_{G,K}^\ss$ (see the GIT construction in \cite{simpson1994modulib}), shows that $\cL(\mu)$ is ample.\footnote{By ample, we mean that it descends, after a suitable power, to an ample bundle on the good moduli space $\Higgs_{G,K,\ttype}^{\ss} \thickslash C \to H_{G,K,\ttype}$, see \cite{kresch2009geometry}.} The cohomology vanishing of the claim follows from Grauert–-Riemenschneider for projective-over-affine Deligne-Mumford stacks, along with the fact that the pushforward functor $\QCoh(\Higgs^{\ss}_{G,K,\ttype}) \to \QCoh(\Higgs^{\ss}_{G,K,\ttype} \thickslash C)$ is exact (as $C$ is reductive).
\end{proof}

\subsubsection{Proof of \autoref{thm:main_verlinde} in the general case}

For general $L$ or general $\gamma$, the stack $\Higgs^{\ss}_{G,L,\ttype}$ will not be smooth, so we cannot simply apply Grauert--Riemenschneider. However, if $\Higgs_{G,L,\ttype}$ is classical (i.e. $H_i(\cO_X) = 0$ for $i>0$) then $H^i(\Higgs_{G,L,\ttype}^\ss,\cL(\mu))$ automatically vanishes in negative degrees. To prove vanishing for $i>0$, \autoref{prop:quantization} implies that $H^i(\Higgs_{G,L,\ttype}^\ss,\cL(\mu)) \simeq H^i(\Higgs_{G,L,\ttype},\cL(\mu))$, which vanishes by \autoref{thm:vanish}. So \autoref{thm:main_verlinde} amounts to specifying conditions under which $\Higgs_{G,L,\ttype} = \Higgs^{\op{cl}}_{G,L,\ttype}$, which happens if and only if $\dim(\Higgs_{\ttype}^{\rm{cl},\ss}) = \op{vdim}(\Higgs^{\ss}_{\ttype}) = \dim(\fg) \deg(L) + \dim(\fz) h^1(L)$.

\begin{prop} \label{prop:classical}
$\Higgs_{G,L}$ is a classical stack in the following cases:
\begin{enumerate}
\item if $\deg(L) > \max(0,2g-2)$ \cite{arinkin2016irreducible};
\item if $L = K$ and $g>1$;
\end{enumerate}
\end{prop}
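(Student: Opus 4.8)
The plan is to reduce the classicality of $\Higgs_{G,L,\ttype}$ to a codimension estimate for a jumping locus on $\cM_G$, and then to invoke the appropriate geometric input in each case. Since $\Higgs_{G,L}$ is quasi-smooth and the inequality $\dim \Higgs_{G,L}^{\rm{cl}} \geq \op{vdim} \Higgs_{G,L}$ holds a priori on each connected component by \eqref{eqn:classical_condition}, it suffices to prove the reverse inequality on $\Higgs_{G,L,\ttype}$, i.e.\ that $\dim \Higgs_{G,L,\ttype}^{\rm{cl}} \leq \dim(\fg)\deg(L) + \dim(\fz)h^1(L)$. I would view $\Higgs_{G,L}^{\rm{cl}}$ as lying over the smooth stack $\cM_{G,\ttype}$ (of dimension $\dim(\fg)(g-1)$), with fiber over a $G$-bundle $E$ the affine space of Higgs fields $H^0(\Sigma, L \otimes E^\ast(\fg^\dual))$, whose dimension is an upper semicontinuous function of $E$. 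By $\fg \simeq \fg^\dual$, relative Serre duality and Riemann--Roch, this fiber dimension equals the constant $\dim(\fg)\chi(L) + \dim(\fz)h^1(L) = \op{vdim} \Higgs_{G,L,\ttype} - \dim\cM_{G,\ttype}$ plus a correction term $e(E) := h^1(\Sigma, L\otimes E^\ast(\fg'))$ (which is $h^0(\Sigma, E^\ast(\fg'))$ when $L = K$); moreover $e(E)$ vanishes on a dense open substack of $\cM_{G,\ttype}$ — on the semistable locus when $\deg(L) > 2g-2$ (there $L \otimes E^\ast(\fg')$ is semistable of slope $> 2g-2$) and on the stable locus when $L = K$ and $g>1$. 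A short dimension count then shows that over the locally closed stratum $\{E : e(E) = \ell\}$ the classical stack has dimension $\op{vdim} \Higgs_{G,L,\ttype} + \ell - \op{codim}_{\cM_G}\{E : e(E) = \ell\}$, so $\Higgs_{G,L,\ttype}$ is classical if and only if
\[
\op{codim}_{\cM_G}\{E : e(E) \geq \ell\} \geq \ell \qquad \text{for all } \ell \geq 1 .
\]

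For case (1), $\deg(L) > \max(0,2g-2)$ (so also $h^1(L) = 0$), this inequality is precisely \cite[Proposition 3.2]{arinkin2016irreducible}, which bounds the codimension of the jumping loci for the dimension of the space of $L$-twisted Higgs fields — equivalently, it asserts that the classical moduli of $L$-twisted Higgs bundles is everywhere of the expected dimension. In this range the jumping locus lies in the unstable part of $\cM_G$, so the estimate reduces to the Harder--Narasimhan stratification together with the behaviour of $h^0$ along the induced filtration of $E^\ast(\fg)$, but I would simply cite the result.

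For case (2), $L = K$ and $g>1$, we have $L^\dual \otimes K = \cO_\Sigma$ and hence $e(E) = h^0(\Sigma, E^\ast(\fg')) = \dim\op{Aut}_{\cM_G}(E) - \dim(\fz) = \dim\op{Aut}_{\cM_G \thickslash C}(E)$, using that $\op{Aut}_{\cM_G}(E)$ has Lie algebra $H^0(\Sigma, E^\ast(\fg')) \oplus \fz$ and rigidifying by the central torus $C$. The displayed estimate then asserts exactly that $\cM_G \thickslash C$ is \emph{good} in the sense of Beilinson--Drinfeld — the locus where the automorphism group jumps in dimension by $\ell$ has codimension at least $\ell$ — which is established for $g>1$ in \cite{beilinson1991quantization}. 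Equivalently, via \autoref{rem:cotangent_stack} one can bypass the explicit stratification: $\Higgs_{G,K}$ is a $C$-gerbe over $T^\ast(\cM_G \thickslash C)$, and the cotangent stack of the smooth stack $\cM_G \thickslash C$ is classical precisely when $\cM_G \thickslash C$ is good.

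The main obstacle is the codimension/goodness estimate itself, which is the geometric heart of the statement; I would not reprove it but cite \cite{arinkin2016irreducible} and \cite{beilinson1991quantization}. The rest is a routine unwinding of the derived structure — in particular one should check that the ``reduced'' modification of the center in the definition of $\cP_L$, which trades a $\fz$-summand in homological degree $1$ for one in degree $0$, is a trivial bundle over $\cM_G$ and so leaves the dimension bookkeeping above unchanged.
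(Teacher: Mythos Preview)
Your proposal is correct and follows essentially the same route as the paper: both arguments reduce classicality to the dimension equality \eqref{eqn:classical_condition}, cite \cite{arinkin2016irreducible} directly for case~(1), and for case~(2) reduce to the semisimple case established in \cite{beilinson1991quantization}. Your framing via the jumping-locus codimension estimate (equivalently, ``goodness'' of $\cM_G\thickslash C$) is just an unpacking of that dimension equality, whereas the paper computes $\dim\Higgs_{G,K}^{\rm cl}$ more directly by pulling back $\Higgs_{G^{\rm ad},K}^{\rm cl}$ along a smooth map $\cM_G\times (H^1(\Sigma,\cO_\Sigma)\otimes\fz)^\dual\to\cM_{G^{\rm ad}}$ of relative dimension $(2g-1)\dim\fz$; these are equivalent formulations of the same reduction. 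One small point: \cite{beilinson1991quantization} proves goodness for \emph{semisimple} $G$, not literally for $\cM_G\thickslash C$ with $G$ reductive, so you should make the passage to $G^{\rm ad}$ explicit (as the paper does) rather than citing it as a black box for the rigidified stack.
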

\begin{proof}
(1) is part of the statement of \cite[Proposition 3.2]{arinkin2016irreducible}. (2) has been known for some time for semisimple $G$ -- see \cite[Proposition 2.1.2]{beilinson1991quantization} and the references therein. As remarked there, the semisimple case implies that $\Higgs_{G,K}$ is classical for general reductive $G$: The projection $G \to G^{\rm{ad}} = G/Z(G)$ induces a map of classical stacks
\[
\iSpec_{\cM_G} (\Sym(\cO_{\cM_G} \otimes H^1(\Sigma,\cO_\Sigma)\otimes \fz)) \to \cM_{G^{\rm{ad}}}
\]
which is smooth of relative dimension $(2 g-1) \dim(\fz)$. By definition $\Higgs_{G,K}^{\op{cl}}$ is the pullback of $\Higgs_{G^{\rm{ad}},K}^{\op{cl}}$ along this map, so $\dim \Higgs_{G,K}^{\op{cl}} = (2g-2) \dim(\fg) + \dim(\fz)$ as needed.
\end{proof}

\begin{rem}
Note that $\cM_G \subset \Higgs_{G,L,\ttype}$ is a closed substack, so $\dim \Higgs^{\ss}_\ttype = \op{vdim}\Higgs_{\ttype}^\ss$ can only happen if $\dim \cM_G = (g-1) \dim(\fg) \leq \dim(\fg) \deg(L) + \dim(\fz) h^1(L)$.\footnote{This was pointed out to us by Dima Arinkin.} We do not know of a complete classification of when $\Higgs_{G,L,\ttype}^\ss$ is classical for $L$ with
\[
g-1 - \frac{\dim \fz}{\dim \fg} h^1(L) \leq \deg(L) \leq 2g-2.
\]
\end{rem}

\subsection{Comparison with the good moduli space}
\label{sect:good_moduli}

As mentioned above, we have a projective-over-affine good moduli space which we denote $q : \Higgs_{G,L,\ttype}^{\rm{cl},\ss} \to H_{G,L,\ttype}$. We record the following lemma for classical algebraic stacks
\begin{lem}
Let $\cY \to \cY'$ be a $A$-bundle for some algebraic group $A$, and let $\cY \to Y$ be a good moduli space, then $A$ acts on $Y$, and we have a Cartesian diagram
\[
\xymatrix@R=10pt{\cY \ar[r] \ar[d] & \cY' \ar[d] \\ Y \ar[r] & Y/A}
\]
\end{lem}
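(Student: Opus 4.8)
The plan is to descend the $A$-action from $\cY$ to $Y$ using the universal property of good moduli spaces, and then recognize the asserted square as a diagram of $A$-torsors. Write $p\colon\cY\to Y$ for the good moduli space morphism and $\sigma\colon A\times\cY\to\cY$ for the (free) action determined by the bundle structure, so that $\cY'\simeq[\cY/A]$; I interpret $Y/A$ as the quotient stack $[Y/A]$ (equivalently, when the induced action on $Y$ turns out to be free with representable quotient, as that quotient algebraic space, and the argument is the same). To produce the action on $Y$: since everything is defined over $\bC$, the projection $A\times Y\to Y$ is flat, so by stability of good moduli space morphisms under flat base change $\mathrm{id}_A\times p\colon A\times\cY\to A\times Y$ is again a good moduli space. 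Applying the universal property of good moduli spaces (initiality among morphisms to algebraic spaces) to $p\circ\sigma\colon A\times\cY\to Y$ then gives a unique $\bar\sigma\colon A\times Y\to Y$ with $\bar\sigma\circ(\mathrm{id}_A\times p)=p\circ\sigma$; in particular $p$ becomes $A$-equivariant. The associativity and unit axioms for $\bar\sigma$ follow from those for $\sigma$ by the same device: each axiom is an identity between two morphisms out of $A\times A\times Y$ (resp.\ out of $Y$), and precomposing with the good moduli space morphism $\mathrm{id}\times p$ reduces it to the corresponding identity for $\sigma$, so uniqueness in the universal property forces it. This proves the first assertion.

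With $p$ equivariant, the composite $\cY\xrightarrow{p}Y\to Y/A$ is $A$-invariant, hence factors through the quotient $[\cY/A]=\cY'$, producing the morphism $\cY'\to Y/A$ and making the square commute by construction. For the Cartesian claim, note that $\cY\to\cY'$ and $Y\to Y/A$ are both $A$-torsors, so the base change $\cY'\times_{Y/A}Y\to\cY'$ is again an $A$-torsor. The canonical morphism $\cY\to\cY'\times_{Y/A}Y$ assembled from $\cY\to\cY'$ and $p$ is $A$-equivariant by the previous paragraph, hence a morphism of $A$-torsors over the base $\cY'$, and any morphism of torsors under a fixed group over a fixed base is automatically an isomorphism. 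Therefore the square is Cartesian.

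The main obstacle is the first step: one must transport not a single morphism but the full group-action datum along the good moduli space morphism, which forces combining the universal property with the flat base change property so that $A\times\cY\to A\times Y$, $A\times A\times\cY\to A\times A\times Y$, and so on are themselves good moduli spaces — only then does uniqueness in the universal property let one push the monoid identities down to $Y$. Once the $A$-action on $Y$ and the equivariance of $p$ are in hand, the Cartesian statement is the soft fact that a morphism of torsors under a fixed group is invertible.
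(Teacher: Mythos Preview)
Your argument is correct. The key input---that good moduli spaces are compatible with products by algebraic spaces (equivalently, stable under flat base change)---is exactly what the paper uses as well, so the two proofs rest on the same foundation. The packaging differs, however. The paper takes the \v{C}ech nerve $\cY_\bullet$ of $\cY\to\cY'$, identifies $\cY_n\simeq A^n\times\cY$, and applies the good-moduli-space functor levelwise to obtain a simplicial algebraic space $Y_\bullet$ with $Y_n\simeq A^n\times Y$; this is automatically the nerve of a groupoid encoding an $A$-action on $Y$, and the levelwise map $\cY_\bullet\to Y_\bullet$ has Cartesian face squares, which realizes the desired Cartesian square after passing to quotients. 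Your approach instead unpacks the low degrees of this simplicial picture by hand---building $\bar\sigma$ and checking the action axioms via uniqueness---and then gives an independent and rather clean argument for the Cartesian property using the fact that any equivariant map of $A$-torsors over a fixed base is an isomorphism. The simplicial argument is more uniform and makes the Cartesian claim fall out structurally, while your torsor argument isolates precisely what is needed and avoids any simplicial bookkeeping.
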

\begin{proof}
The formation of good moduli spaces is functorial and commutes with products with algebraic spaces. Letting $\cY_\bullet$ be the Cech nerve of the map $\cY \to \cY'$, one has $\cY_n \simeq A^n \times \cY$. Taking the good moduli space at every level, one arrives at a simplicial space $Y_\bullet$ with $Y_n \simeq A^n \times Y$. Under this identification is will be the simplicial nerve of a groupoid $A \times Y \rightrightarrows Y$ corresponding to an $A$-action on $Y$. This map of simplicial stacks, for which all face maps are Cartesian, is the presentation for the desired map $\cY' \to Y/A$.
\end{proof}

\begin{rem} \label{rem:properness}
If $A$ is linearly reductive, $\cY'$ is a quotient stack, and $Y$ is a projective-over-affine variety such that $\Gamma(Y,\cO_Y)^A$ is finite dimensional, it follows that $\cY'$ is cohomologically proper in the sense of \cite{halpern2014mapping}. In particular all perfect complexes have finite dimensional derived global sections. This applies, in particular, to the stack $\cX_{G,L,\ttype}$.
\end{rem}
In our case $A = \bG_m$, and we denote the map $\cX^\ss \to H/\bG_m$ by $q'$. For $F \in \QCoh(\cX^\ss)$, the base change theorem gives us a canonical equivalence $q_\ast(F|_{\Higgs^\ss}) \simeq (q')_\ast(F)|_{H}$. Hence $q_\ast(F|_{\Higgs^\ss})$ has a canonical $\bG_m$-action. As the pushforward functors $q_\ast$ and hence $(q')_\ast$ are exact, we have $R\Gamma(\Higgs_{G,L,\ttype}^{\rm{cl},\ss},\cL(\mu)) \simeq R\Gamma(H_{G,L,\ttype},q_\ast(\cL(\mu)))$ as $\bG_m$-modules. Hence we have 
\begin{cor}
\eqref{eqn:main_index} computes the dimension of $H^0(H_{G,L,\ttype},q_\ast(\cL(\mu)))$ under the hypotheses of \autoref{prop:classical}.
\end{cor}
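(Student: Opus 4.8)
The plan is to deduce this directly from \autoref{thm:main_verlinde} together with the comparison between $\Higgs^{\rm{cl},\ss}_{G,L,\ttype}$ and its good moduli space recorded just above. By \autoref{thm:main_verlinde}, under the stated hypotheses $\Higgs^{\ss}_{G,L,\ttype}$ is classical, $H^i(\Higgs^{\rm{cl},\ss}_{G,L,\ttype},\cL(\mu))$ vanishes for $i\neq 0$, and the right-hand side of \eqref{eqn:main_index} equals $\dim_{\bC^\ast} H^0(\Higgs^{\rm{cl},\ss}_{G,L,\ttype},\cL(\mu))$, the grading being the $\bG_m$-weight decomposition coming from scaling the Higgs field. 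So it suffices to identify this graded vector space with $H^0(H_{G,L,\ttype},q_\ast(\cL(\mu)))$ as a graded vector space.

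First I would use that $q : \Higgs^{\rm{cl},\ss}_{G,L,\ttype} \to H_{G,L,\ttype}$ is a good moduli space, hence cohomologically affine: the pushforward $q_\ast$ is exact, so $Rq_\ast = q_\ast$ and $R\Gamma(\Higgs^{\rm{cl},\ss}_{G,L,\ttype},\cL(\mu)) \simeq R\Gamma(H_{G,L,\ttype},q_\ast(\cL(\mu)))$. Combined with the vanishing $H^{>0}(\Higgs^{\rm{cl},\ss}_{G,L,\ttype},\cL(\mu))=0$ from \autoref{thm:main_verlinde}, and the fact that $H_{G,L,\ttype}$ is projective-over-affine (so that the relevant global sections are finite dimensional, cf. \autoref{rem:properness}), this yields $H^0(\Higgs^{\rm{cl},\ss}_{G,L,\ttype},\cL(\mu)) \simeq H^0(H_{G,L,\ttype},q_\ast(\cL(\mu)))$ as vector spaces.

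Next I would track the $\bG_m$-action so that the identification respects the grading. As in the footnote to \autoref{thm:main_index}, the $\bG_m$-weight decomposition on the left is the one induced by the Cartesian square with flat maps relating $\Higgs^\ss_{G,L,\ttype} \to \cX^\ss_{G,L,\ttype}$ to $\op{pt} \to B\bG_m$, via $R\Gamma(\Higgs^\ss_\ttype,\cL(\mu)) \simeq R\pi_\ast(\cL(\mu))|_{\op{pt}}$. Applying the lemma above with $A = \bG_m$ to the $\bG_m$-bundle $\Higgs^{\rm{cl},\ss}_\ttype \to \cX^{\rm{cl},\ss}_\ttype$ and the good moduli space $q$, we get the Cartesian square relating $H_{G,L,\ttype} \to H_{G,L,\ttype}/\bG_m$ to $q$ and $q'$, and the base change equivalence $q_\ast(\cL(\mu)|_{\Higgs^\ss}) \simeq (q')_\ast(\cL(\mu))|_{H}$. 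Since $q'_\ast$ is exact, the comparison $R\Gamma(\Higgs^{\rm{cl},\ss}_{G,L,\ttype},\cL(\mu)) \simeq R\Gamma(H_{G,L,\ttype},q_\ast(\cL(\mu)))$ is realized as the restriction to $\op{pt}$ of a map of pushforwards to $B\bG_m$, hence is $\bG_m$-equivariant; the weight-$n$ subspaces therefore correspond, and $\dim_{\bC^\ast} H^0(\Higgs^{\rm{cl},\ss}_{G,L,\ttype},\cL(\mu)) = \dim_{\bC^\ast} H^0(H_{G,L,\ttype},q_\ast(\cL(\mu)))$, which is \eqref{eqn:main_index}.

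The only genuinely delicate point is the bookkeeping of the $\bG_m$-equivariance: one must check that the exactness of the good moduli space pushforward is compatible with the grading coming from the Higgs scaling action, which is exactly what the lemma on $A$-bundles and good moduli spaces provides. Everything else is a formal combination of \autoref{thm:main_verlinde}, cohomological affineness of $q$, and base change.
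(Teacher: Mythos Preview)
Your proposal is correct and follows essentially the same approach as the paper: the paper's argument (contained in the paragraph immediately preceding the corollary) uses exactness of $q_\ast$ and $(q')_\ast$ together with the base change identification $q_\ast(F|_{\Higgs^\ss}) \simeq (q')_\ast(F)|_H$ to conclude $R\Gamma(\Higgs^{\rm{cl},\ss}_{G,L,\ttype},\cL(\mu)) \simeq R\Gamma(H_{G,L,\ttype},q_\ast(\cL(\mu)))$ as $\bG_m$-modules, then invokes the vanishing from \autoref{thm:main_verlinde}. Your writeup spells out the $\bG_m$-equivariance a bit more carefully, but the logic is the same.
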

Note also that as $\cL(\mu)$ has trivial $C$-weight, some power $\cL(\mu)^k$ for $k>0$ descends to an invertible sheaf on $H_{G,L,\ttype}$, which will agree with $q_\ast(\cL(\mu)^k)$.

\subsection{Higgs bundles framed at a point}
\label{sect:framed}

We can re-interpret the formulas above in terms of the following:

\begin{defn}
Given a $G$-scheme $F$, we define the moduli stack $\Higgs^{F\!\mbox{-}\rm{fr},\ss}_{G,L}$ of \emph{$L$-valued semistable $G$-Higgs bundles with $F$-framing at $x \in \Sigma$} to be the total space of the $F$-fiber bundle associated to the principal $G$-bundle
\[
\Higgs^\ss_{G,L} \to \cM_{G,L} \xrightarrow{\op{ev}_x} BG.
\]
\end{defn}

Note that the ``semistable" superscript does not necessarily mean that these are semistable points of the analogous stack of all $F$-framed $G$-Higgs bundles.

\begin{ex}
When $F = G$ with left multiplication, then this is the usual notion of semistable Higgs bundles with framing (i.e. trivialization) at the point $x$. The non-abelian Hodge correspondence implies that the component of $\Higgs^{G\!\mbox{-}\rm{fr},\ss}_{G,K}$ corresponding to topologically trivial $G$-bundles is actually a (classical) scheme.
\end{ex}

\begin{ex}
When $F = G^{\rm{ab}}$ with $G$ acting by left multiplication, then $\Higgs^{F\!\mbox{-}\rm{fr},\ss}$ is the total space of the tautological $G^{\rm{ab}}$ torsor associated to the tautological principal $G$-bundle on $\Higgs^\ss$. When $G=\GL_n$, the stack $\Higgs^{F\!\mbox{-}\rm{fr},\ss}$ consists of Higgs vector bundles $(\cE,\phi : \cE \to \cE \otimes L)$ along with a non-zero element of $\det(\cE)_x$.
\end{ex}

The composition of the map $\Higgs^{G^{\rm{ab}}\!\mbox{-}\rm{fr},\ss} \to \Higgs^\ss$ composed with the map to the $C$-rigidification $\Higgs^\ss \to \Higgs^\ss \thickslash C$ (see the proof of \autoref{thm:main_verlinde} in the smooth case) is a relative gerbe banded by the finite group $\ker(C \to G^{\rm{ab}})$. Therefore $\Higgs^{G^{\rm{ab}}\!\mbox{-}\rm{fr},\ss}$ has the same good moduli space as $\Higgs^{\ss}$. For $\cL \in \Pic(\cX)$, we abuse notation and let $\cL$ denote the restriction to $\Higgs^{G^{\rm{ab}}\!\mbox{-}\rm{fr},\ss}$ as well. In the notation of \autoref{thm:main_index} we have

\begin{cor}
If $\iota(\gamma)h$ is integral then the formula \eqref{eqn:main_index} expresses
\[
\left\{ \begin{array}{ll} \dim_{\bC^\ast} H^0(\Higgs^{G^{\rm{ab}}\!\mbox{-}\rm{fr},\ss}_{G,L,\ttype},\cL), & \text{ under the hypotheses of \autoref{prop:classical}}, \text{ and} \\ \chi_{\bC^\ast}(\Higgs^{G^{\rm{ab}}\!\mbox{-}\rm{fr},\ss}_{G,L,\ttype},\cL), & \text{ if } \deg(L) \geq 0. \end{array} \right.
\]
$R\Gamma(\Higgs^{G^{\rm{ab}}\!\mbox{-}\rm{fr},\ss},\cL)$ vanishes if $\iota(\gamma)h$ is not integral.
\end{cor}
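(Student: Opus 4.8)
The plan is to push everything forward along the projection $\pi : \Higgs^{G^{\rm{ab}}\!\mbox{-}\rm{fr},\ss}_{G,L,\ttype} \to \Higgs^{\ss}_{G,L,\ttype}$ and reduce to \autoref{thm:main_index} and \autoref{thm:main_verlinde}. By construction $\Higgs^{G^{\rm{ab}}\!\mbox{-}\rm{fr},\ss}$ is the total space of the $G^{\rm{ab}}$-bundle obtained from the universal $G$-bundle via $\op{ev}_x \circ (\Higgs^{\ss} \to \cM_G)$, so $\pi$ is a torsor under the torus $G^{\rm{ab}}$ acting by right translation; in particular $\pi$ is affine, $\bG_m$-equivariant for the Higgs-field scaling, and a bijection on connected components (its fibers are connected). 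Since $\pi$ is affine, $\pi_\ast$ is exact, and decomposing $\cO_{G^{\rm{ab}}}$ under the $G$-action — equivalently, writing the structure sheaf of a torus torsor as the direct sum of its associated line bundles $E_x^\ast(\bC_{-\mu})$, $\mu \in \widehat{G^{\rm{ab}}} = \widehat{G}$ — together with the projection formula yields a $\bG_m$-equivariant identification $\pi_\ast\cL \simeq \bigoplus_{\mu \in \widehat{G}} \cL(\mu)$, with $\cL(\mu)$ as in \eqref{eqn:special_bundle} and $G^{\rm{ab}}$ acting on the $\mu$-summand through $\mu$.

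Taking global sections and using exactness of $\pi_\ast$ one obtains a $\bG_m$-equivariant decomposition
\[
R\Gamma(\Higgs^{G^{\rm{ab}}\!\mbox{-}\rm{fr},\ss}_{G,L,\ttype},\cL) \;\simeq\; \bigoplus_{\mu \in \widehat{G}} R\Gamma(\Higgs^{\ss}_{G,L,\ttype},\cL(\mu)).
\]
By \autoref{lem:component_weights}, and the fact that $C$ sits inside the inertia of $\Higgs$ while acting trivially on the coarse space, each summand $R\Gamma(\Higgs^{\ss}_{G,L,\ttype},\cL(\mu))$ vanishes unless the $C$-weight $\iota(\gamma)h - \mu$ of $\cL(\mu)$ on the $\ttype$-component is zero, i.e.\ unless $\mu = \iota(\gamma)h$ — the vanishing already recorded in the footnote to \autoref{thm:main_index}. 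Consequently, if $\iota(\gamma)h$ is not an integral character then every summand vanishes and $R\Gamma(\Higgs^{G^{\rm{ab}}\!\mbox{-}\rm{fr},\ss}_{G,L,\ttype},\cL) = 0$; while if $\iota(\gamma)h$ is integral the decomposition collapses to the single summand $R\Gamma(\Higgs^{\ss}_{G,L,\ttype},\cL(\mu))$ with $\mu = \iota(\gamma)h$, so that the two complexes agree in every cohomological degree.

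It remains to identify this surviving term. Since the decomposition is $\bG_m$-equivariant, taking graded Euler characteristics gives $\chi_{\bC^\ast}(\Higgs^{G^{\rm{ab}}\!\mbox{-}\rm{fr},\ss}_{G,L,\ttype},\cL) = \chi_{\bC^\ast}(\Higgs^{\ss}_{G,L,\ttype},\cL(\mu))$, which equals \eqref{eqn:main_index} by \autoref{thm:main_index} as soon as $\deg(L)\geq 0$. Under the hypotheses of \autoref{prop:classical} one has $\deg(L)\geq 0$ and, by \autoref{thm:main_verlinde}, $H^i(\Higgs^{\ss}_{G,L,\ttype},\cL(\mu)) = 0$ for $i\neq 0$; combined with the degreewise agreement above this forces $H^i(\Higgs^{G^{\rm{ab}}\!\mbox{-}\rm{fr},\ss}_{G,L,\ttype},\cL) = 0$ for $i\neq 0$ and $\dim_{\bC^\ast} H^0(\Higgs^{G^{\rm{ab}}\!\mbox{-}\rm{fr},\ss}_{G,L,\ttype},\cL) = \dim_{\bC^\ast} H^0(\Higgs^{\ss}_{G,L,\ttype},\cL(\mu))$, which is \eqref{eqn:main_index}. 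The only real difficulty is bookkeeping: keeping the sign and normalization conventions relating the associated line bundles appearing in $\pi_\ast\cO$, the twisted sheaves $\cL(\mu)$ of \eqref{eqn:special_bundle}, and the $C$-weight computation of \autoref{lem:component_weights} consistent, so that the unique non-vanishing summand is exactly the one to which \autoref{thm:main_index} applies.
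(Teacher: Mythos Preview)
Your proof is correct. The paper does not give an explicit proof of this corollary; it merely records, in the paragraph immediately preceding it, that the composite $\Higgs^{G^{\rm{ab}}\!\mbox{-}\rm{fr},\ss}\to\Higgs^{\ss}\to\Higgs^{\ss}\thickslash C$ is a gerbe for the finite group $\ker(C\to G^{\rm{ab}})$, so that $\cL$ upstairs and $\cL(\mu)$ on $\Higgs^{\ss}$ (for the unique $\mu$ with trivial $C$-weight) descend to the same line bundle on the rigidification, and hence have the same cohomology. Your argument takes the complementary route: rather than passing \emph{down} to $\Higgs^{\ss}\thickslash C$, you push forward along the $G^{\rm{ab}}$-torsor $\pi:\Higgs^{G^{\rm{ab}}\!\mbox{-}\rm{fr},\ss}\to\Higgs^{\ss}$, decompose $\pi_\ast\cL\simeq\bigoplus_{\mu\in\widehat G}\cL(\mu)$, and then invoke the same $C$-weight vanishing (\autoref{lem:component_weights}) to kill all but the summand with $\mu=\iota(\gamma)h$. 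Both routes hinge on the identical mechanism---the $C$-gerbe structure forces a unique surviving character---and both reduce immediately to \autoref{thm:main_index} and \autoref{thm:main_verlinde}; yours is simply the more explicit, computational unpacking and avoids any appeal to rigidification.
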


Combining the several of the previous results, \autoref{prop:index_formula} and \autoref{prop:quantization}, with our discussion of when $\Higgs_{G,L}$ is classical, we can state a slightly more general, if slightly weaker, corollary. If $\cL \in \Pic(\cX)$ has positive level $h$ such that $\mu = \iota(\gamma) h$ is integral, we can restrict $\cL$ to $\Higgs^{G\!\mbox{-}\rm{fr},\ss}_{G,L,\ttype}$ and define the canonically $G$-equivariant graded ring
\[
R_\cL = \bigoplus R_{\cL,k} := \bigoplus_k H^0(\Higgs^{G\!\mbox{-}\rm{fr},\ss}_{G,L,\ttype},\cL^k).
\]
As a representation of $G$, $R_{\cL,k}$ is concentrated in $C$-weight $k \mu$.
\begin{cor}
Assume the hypotheses of \autoref{prop:classical}. Then for any irreducible $G$-representation $U$, the formula \eqref{eqn:index} of \autoref{prop:index_formula} gives the graded dimension of the $U\otimes {\bC_{k\mu}}$-isotypical summand of $R_{\cL,k}$ for $k\gg0$.
\end{cor}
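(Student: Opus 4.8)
The plan is to promote the $G$-framing at $x$ to a principal $G$-bundle and then identify the $G$-isotypical pieces of $R_\cL$ with twisted spaces of sections already controlled by \autoref{prop:index_formula} and \autoref{prop:quantization}. Concretely, by construction $\Higgs^{G\!\mbox{-}\rm{fr},\ss}_{G,L,\ttype}$ is the total space of the principal $G$-bundle $\pi\colon \Higgs^{G\!\mbox{-}\rm{fr},\ss}_{G,L,\ttype}\to\Higgs^{\ss}_{G,L,\ttype}$ classified by $\Higgs^{\ss}_{G,L,\ttype}\xrightarrow{p}\cM_{G,\ttype}\xrightarrow{\op{ev}_x}BG$, and the $G$-action on $R_\cL$ is the residual action on this torsor. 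Since $\pi$ is affine, $\pi_\ast\cO$ is \'etale-locally the regular representation $\cO(G)$, so the algebraic Peter--Weyl decomposition $\cO(G)\simeq\bigoplus_{U\in\widehat G}U\otimes U^\dual$ gives a $G$-equivariant identification $\pi_\ast\cO\simeq\bigoplus_{U\in\widehat G}E_x^\ast(U^\dual)\otimes U$, with $G$ acting through the factors $U$. By the projection formula this yields, for each $k$, a decomposition of $\bC^\ast$-graded $G$-representations
\[
R\Gamma\bigl(\Higgs^{G\!\mbox{-}\rm{fr},\ss}_{G,L,\ttype},\cL^k\bigr)\;\simeq\;\bigoplus_{U\in\widehat G}R\Gamma\bigl(\Higgs^{\ss}_{G,L,\ttype},\,\cL^k\otimes E_x^\ast(U^\dual)\bigr)\otimes U,
\]
whence the $(U\otimes\bC_{k\mu})$-multiplicity complex of $R\Gamma(\Higgs^{G\!\mbox{-}\rm{fr},\ss}_{G,L,\ttype},\cL^k)$ is $R\Gamma\bigl(\Higgs^{\ss}_{G,L,\ttype},\cL^k\otimes E_x^\ast((U\otimes\bC_{k\mu})^\dual)\bigr)\simeq R\Gamma\bigl(\Higgs^{\ss}_{G,L,\ttype},\cL(\mu)^k\otimes E_x^\ast(U^\dual)\bigr)$, using $\cL^k\otimes E_x^\ast(\bC_{-k\mu})\simeq\cL(\mu)^k$.

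Next I would show this complex is concentrated in cohomological degree $0$ for $k$ large. Fix $U$; since $\cL$ has positive level $h$, the line bundle $\cL(\mu)^k$ has positive level $kh$, and applying \autoref{prop:quantization} in the form of \autoref{rem:positive_enough} to the perfect complex $\cL(\mu)^k\otimes E_x^\ast(U^\dual)$ makes the restriction $R\Gamma(\Higgs_{G,L,\ttype},\cL(\mu)^k\otimes E_x^\ast(U^\dual))\to R\Gamma(\Higgs^{\ss}_{G,L,\ttype},\cL(\mu)^k\otimes E_x^\ast(U^\dual))$ an equivalence, provided \eqref{eqn:ineq} holds at level $kh$ for the weights occurring in $U^\dual$. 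As $\mu\in M_\bQ^W$ projects to $0$ in $M_\bQ'$, the finite set of $\mu'$ to be tested is independent of $k$, and \eqref{eqn:ineq} then pits the negative definite form $kh$ against the fixed linear form $2\deg(L)\rho-\mu'$ on the discrete set of Harder--Narasimhan labels; only finitely many labels need checking, and the inequality holds for all of them once $k\gg0$ (a bound depending on $U$ and $L$). Under the hypotheses of \autoref{prop:classical} both $\Higgs_{G,L,\ttype}$ and $\Higgs^{\ss}_{G,L,\ttype}$ are classical, so both complexes above are concentrated in non-negative degrees; unwinding $R\Gamma(\Higgs_{G,L,\ttype},\cL(\mu)^k\otimes E_x^\ast(U^\dual))$ through the projection formula into $\bigoplus_n R\Gamma(\cM_{G,\ttype},\cL(\mu)^k\otimes E_x^\ast(U^\dual)\otimes\Sym^n(\cP_L))$, its higher cohomology vanishes by the vanishing theorem \autoref{thm:vanish} of the appendix. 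The one nontrivial point is that the appendix argument, which establishes $H^{>0}(\cM_{G,\ttype},\cL'\otimes\Sym^n(\cP_L))=0$ for $\cL'$ of positive level, must be carried out with an extra even Atiyah--Bott twist $E_x^\ast(U^\dual)$; checking that the proof goes through under this twist is where the real work lies.

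Finally I would assemble the formula. Combining the two steps, the graded multiplicity of $U\otimes\bC_{k\mu}$ in $R_{\cL,k}$ is $\dim_{\bC^\ast}H^0(\Higgs^{\ss}_{G,L,\ttype},\cL(\mu)^k\otimes E_x^\ast(U^\dual))=\chi_{\bC^\ast}(\Higgs^{\ss}_{G,L,\ttype},\cL(\mu)^k\otimes E_x^\ast(U^\dual))$. Using \autoref{rem:positive_enough} once more to pass from the semistable locus back to the full stack, and \autoref{lem:component_weights} to see that $\cL(\mu)^k\otimes E_x^\ast(U^\dual)$ has trivial $C$-weight on the component $\ttype$ and nontrivial $C$-weight on every other component of $\cX_{G,L}$ (so only $\ttype$ contributes), this becomes $\sum_n t^n\chi(\cX_{G,L},p^\ast(\cL(\mu)^k\langle n\rangle\otimes E_x^\ast(U^\dual)))$ --- precisely the left-hand side of \eqref{eqn:index} in \autoref{prop:index_formula} with the level-$kh$ line bundle $\cL(\mu)^k$ and the representation $U^\dual$ in place of $\cL$ and $U$. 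Multiplying by $\dim(U\otimes\bC_{k\mu})=\dim U$ gives the graded dimension of the isotypical summand; when $U$ has nontrivial central character the isotypical summand and the corresponding specialization of \eqref{eqn:index} both vanish, which is routine central-character bookkeeping (and the remaining discrepancy $U\leftrightarrow U^\dual$ is a harmless choice of convention for the residual torsor action). Thus the first and third steps are formal manipulations of Peter--Weyl, the quantization theorem, and \autoref{prop:index_formula}, and the single substantive obstacle is the cohomology vanishing of the middle step: extending the appendix's Borel--Weil--Bott-type vanishing theorem to allow the even Atiyah--Bott twist $E_x^\ast(U^\dual)$.
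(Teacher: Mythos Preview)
Your overall strategy --- Peter--Weyl on the $G$-torsor $\Higgs^{G\text{-}\mathrm{fr},\ss}_{G,L,\ttype}\to\Higgs^{\ss}_{G,L,\ttype}$, then quantization via \autoref{rem:positive_enough}, then cohomology vanishing, then \autoref{prop:index_formula} --- is correct and is what the paper intends; the paper gives no proof beyond the phrase ``combining several of the previous results.''

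The one place you have misjudged the difficulty is the vanishing step. You are right that \autoref{thm:vanish} is stated only for line bundles and does not literally cover the twist by $E_x^\ast(U^\dual)$. But the corollary is asserted only for $k\gg 0$, and this is precisely so that one does not need the appendix here at all: ordinary Serre vanishing on the good moduli space suffices. Recall from \S\ref{sect:good_moduli} and the proof of the smooth case of \autoref{thm:main_verlinde} that some power $\cL(\mu)^N$ descends to an ample invertible sheaf $\cL'$ on the projective-over-affine scheme $H_{G,L,\ttype}$, and that the good-moduli-space map $q$ has exact pushforward sending coherent sheaves to coherent sheaves. Writing $k=Nm+r$ with $0\le r<N$, the projection formula gives
\[
q_\ast\bigl(\cL(\mu)^k\otimes E_x^\ast(U^\dual)\bigr)\;\simeq\;\cL'^{\,m}\otimes q_\ast\bigl(\cL(\mu)^r\otimes E_x^\ast(U^\dual)\bigr),
\]
and relative Serre vanishing kills $H^{>0}$ of the right-hand side once $m\gg 0$; with only finitely many residues $r$ this yields $H^{>0}\bigl(\Higgs^{\ss}_{G,L,\ttype},\cL(\mu)^k\otimes E_x^\ast(U^\dual)\bigr)=0$ for all $k\gg 0$. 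Classicality (\autoref{prop:classical}) handles negative degrees. So you never need to pass back to the full stack for the vanishing, and you never need to extend the appendix --- which is presumably why the paper calls this corollary ``slightly weaker'' and lists only \autoref{prop:index_formula}, \autoref{prop:quantization}, and classicality among its ingredients.
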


\appendix

\section{Vanishing theorem on the stack of Higgs bundles, by Constantin Teleman}																				
\label{appendix:vanishing}

This section proves the following cohomology 
vanishing theorem, which ensures that the $t$-Euler characteristic computed in the body of the paper 
gives the graded dimensions of the space of regular sections of positive line bundles 
over the Higgs moduli stack $\cH_{G,K}$ in genus $\ge2$. For components of 
$\cH_{G,K}$ corresponding to primitive elements of $\pi_1(G)$, the Grauert-Riemenschneider theorem may 
be used, as explained earlier in the paper. For the general case, the rather involved proof below is based 
on results in \cite{fishel2008strong}.
\begin{thm}\label{thm:vanish}
Let $\cL$ be a positive-level line bundle on $\cH_{G,L}$. Then, $H^{>0}(\cH_{G,L}; \cL) =0$.
\end{thm}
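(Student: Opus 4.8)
The plan is to push the computation down to the stack of bundles $\cM_G$ and then deploy Teleman's Borel--Weil--Bott machinery for $\cM_G$ together with the strong Macdonald results of \cite{fishel2008strong}. The structure morphism $q\colon\cH_{G,L}=\iSpec_{\cM_G}(\Sym\cP_L)\to\cM_G$ is affine, so $Rq_\ast\cL\simeq\cL\otimes\Sym(\cP_L)=\bigoplus_{r\ge0}\cL\otimes\Sym^r(\cP_L)$, where $\Sym^r$ is the \emph{derived} symmetric power of the connective complex $\cP_L$ (of homological amplitude $[0,1]$). Hence $H^{>0}(\cH_{G,L};\cL)=\bigoplus_{r\ge0}H^{>0}(\cM_G;\cL\otimes\Sym^r(\cP_L))$, and it suffices to prove each summand vanishes. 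Writing $\cP_L=\cP_L'\oplus(\cO_{\cM_G}\otimes H^0(L)^\dual\otimes\fz)$ with $\cP_L'=R\pi_\ast(L^\dual\otimes K\otimes E^\ast(\fg')[1])$, the second summand is a constant vector space, so $\Sym^r(\cP_L)$ is a finite direct sum of the $\Sym^a(\cP_L')$ with $a\le r$, and we reduce to $H^{>0}(\cM_G;\cL\otimes\Sym^a(\cP_L'))=0$. For $L=K$ one has $\cP_K'\simeq T_{\cM_G}$ up to trivial summands and the odd factor $\fz[1]$ (cf.\ \autoref{rem:cotangent_stack}), so this is precisely the vanishing of $H^{>0}(\cM_G;\cL\otimes\Sym^r(T_{\cM_G}))$ for a positive-level $\cL$; for general $L$ the complex $\cP_L'$ is an ``$L$-twisted tangent complex'' that plays the same role in what follows.

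For the core vanishing I would use the uniformization $\cM_G\simeq G_{\mathrm{out}}\backslash\mathrm{Gr}_G$ of \cite{teleman1998borel} (or an exhaustion of $\cM_G$ by quotient stacks $G_{\mathrm{out},n}\backslash\mathrm{Gr}_G$), pull $\cL\otimes\Sym^r(\cP_L')$ back along $\mathrm{Gr}_G\to\cM_G$, and identify its derived global sections over $\mathrm{Gr}_G$ with a Lie-algebra cohomology of $\fg[[z]]$ (or of the truncations $\fg[z]/z^n$) with coefficients in an appropriate symmetric/exterior algebra built from $\cP_L'$. The strong Macdonald theorem and the Hodge theory on the loop Grassmannian of \cite{fishel2008strong} compute this cohomology: the higher cohomology over $\mathrm{Gr}_G$ vanishes, and $H^0$ is a ``positive-energy'' module for the relevant extended loop algebra. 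Finally, descending from $\mathrm{Gr}_G$ back to $\cM_G$ — i.e.\ passing to $G_{\mathrm{out}}$-invariants — does not create cohomology in positive degrees: a positive level for $\cL$ supplies a contracting $\bG_m$-action (the energy grading), and the standard argument shows that the $G_{\mathrm{out}}$-cohomology of a positive-energy module is concentrated in degree $0$. This is the same mechanism that yields $H^{>0}(\cM_G;\cL)=0$ for positive $\cL$, now run with the symmetric-power coefficients $\Sym^r(\cP_L')$, and it generalizes and simplifies the vanishing theorem of \cite{frenkel2013geometric}.

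The main obstacle is the interaction between the \emph{derived} symmetric powers of the genuinely non-locally-free complex $\cP_L'$ (it carries an $H_1$ coming from $R^0\pi_\ast$) and the uniformization: one must identify $R\Gamma(\mathrm{Gr}_G;\ \text{pullback of }\cL\otimes\Sym^r(\cP_L'))$ with exactly the cohomology that \cite{fishel2008strong} controls, check degeneration of the spectral sequences relating $\Sym^r$ of a two-term complex to honest symmetric and exterior powers of its cohomology sheaves, and then run the positive-energy/contraction argument for the $G_{\mathrm{out}}$-action with these coefficients instead of for a line bundle alone. Threading the bookkeeping — the decomposition $\fg=\fg'\oplus\fz$, the characters implicit in a positive-level bundle on reductive (rather than semisimple) $G$, and the meaning of positivity for the $L$-twisted complex when $\deg(L)\neq 2g-2$ — through the loop-group computation is the remaining burden. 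I expect the reduction to $\cM_G$ and the formal manipulations to be routine; the real work is transporting the strong-Macdonald vanishing across the uniformization and combining it with positivity of $\cL$.
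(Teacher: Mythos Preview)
Your reduction to $H^{>0}(\cM_G;\cL\otimes\Sym^r\cP_L)=0$ and your identification of \cite{fishel2008strong} as the key external input are both correct, and your outline is close in spirit to the paper's second proof. The paper too resolves the tangent complex as $\fg[\Sigma^\times]\xrightarrow{\partial}\fg(\!(z)\!)/\fg[\![z]\!]$ and runs a spectral sequence whose $E_1$-page factors (via \cite{teleman1998borel}) as
\[
E_1^{-s,q}\;\cong\;\bigoplus_{u,R} H^{q-u}_{G[\![z]\!]}\!\left(R\otimes\Sym^{r-s}\bigl(\fg(\!(z)\!)/\fg[\![z]\!]\bigr)\right)\otimes H^{u}\!\left(\cM_G;\,E_p^\ast(R_0^\vee)\otimes\cL\otimes{\textstyle\bigwedge}^s\fg[\Sigma^\times]\right).
\]
The strong Macdonald theorem (\cite{fishel2008strong}, Theorem~E) forces $q=u$ in the first factor, exactly as you anticipate.

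The gap is in your descent step. You assert that ``passing to $G_{\mathrm{out}}$-invariants does not create cohomology in positive degrees'' because ``the $G_{\mathrm{out}}$-cohomology of a positive-energy module is concentrated in degree $0$''. This is not true in the generality you need: once the resolution is unwound, the modules whose $G_{\mathrm{out}}$-cohomology you must control contain the factors ${\textstyle\bigwedge}^s\fg[\Sigma^\times]$, and the groups $H^{u}(\cM_G;\,E_p^\ast(R_0^\vee)\otimes\cL\otimes{\textstyle\bigwedge}^s\fg[\Sigma^\times])$ are \emph{not} zero for $u>0$. The positive-energy/contraction argument that handles a bare line bundle does not extend to these coefficients. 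What is true --- and what the paper actually proves --- is only the bound $u<s$ (except $u=s=0$), so that the total degree $q-s=u-s$ is negative on $E_1$.

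That bound is the missing ingredient, and it is not formal. The paper obtains it by an independent argument: push $\cL\otimes\Sym^r\cP_L$ forward to $\mathrm{Sym}^r\Sigma$, obtaining a coherent complex $\cT_r$, and show that its Serre dual $\bS\cT_r$ is a single sheaf in degree $0$. This is proved by computing local cohomology with supports along the small diagonal and invoking the strong Macdonald vanishing there; factorisation then propagates the result to all multidiagonals. The degree range for $\cT_r$ that falls out (namely $[-r,-1]$) is precisely the statement $u<s$ above. So your plan needs one more substantial step beyond ``strong Macdonald plus positive energy'': a control on the $\fg[\Sigma^\times]$-side, for which the paper's Serre-duality/local-cohomology argument on $\mathrm{Sym}^r\Sigma$ seems to be the available tool.
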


\begin{rem}
The `underived' structure of $\cH_{G,K}$ 
ensures the vanishing of negative cohomology. However, there \emph{is} negative-degree cohomology 
in genera $0$ and $1$. (This is easy to see when $g=0$: the semi-stable part of the stack is $T^*BG$, 
with cotangent fibers of degree $1$, and $\cL$ vanishes there, so we are finding the $G$-invariants 
in ${\textstyle\bigwedge}^*\frg$, with $\frg$ placed in degree $(-1)$.) 
\end{rem}

\subsection{Reduction to semi-simple $G$.}  
The vanishing theorem can be reduced to the case when $G$ is semi-simple --- \emph{which we shall 
henceforth assume} --- provided that we include the variants of $\cM_G$ twisted by central elements 
$c\in Z(G)$. (The twist forces a minor tweak in the argument.) Indeed, the derived sequence $G'=[G,G] \to 
G \to G^{ab}$, which splits over a finite cover, leads to a fiber bundle structure of $\cM_G$ over 
$\cM_{G^{ab}}$ with finite structure group, having as fibers the versions of $\cM_{G'}$ twisted by 
elements $c\in Z(G')$.\footnote{Over a component of $\cM_{G^{ab}}$ labelled by $p\in \pi_1G^{ab}$, the  
component $c$ is the projection to $\pi_1G^{ad}/\pi_1G' \cong Z(G')$ of any lift of $p$ to 
$\pi_1G$.} Cohomology vanishing on the base comes from positivity and Kodaira's theorem, so this 
bundle structure reduces us to the semi-simple case.

Before proceeding with the proof, we note the following
\begin{lem}\label{symr}
The cohomology of a positive-level $\cL$ over $\cH_{G,L}$ is determined degree-by-degree along the fibers
of the projection $\cH_{G,L} \to \cM_G$:
\[
H^*(\cH_{G,L}; \cL) = \bigoplus\nolimits_r \bH^*\left(\cM_G; \cL\otimes\mathrm{Sym}^r \cP_L\right).
\]
\end{lem}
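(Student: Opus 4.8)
The plan is to use that the structural projection $\pi\colon \cH_{G,L}\to\cM_G$ is an affine morphism. By construction $\cH_{G,L}=\iSpec_{\cM_G}(\Sym\cP_L)$ is the relative spectrum of a sheaf of connective CDGA's, so $R\pi_\ast$ introduces no higher cohomology and $R\pi_\ast\cO_{\cH_{G,L}}\simeq\Sym\cP_L$ as a (derived) $\cO_{\cM_G}$-module. Regarding $\cL$ as pulled back from $\cM_G$ (as in the statement; the pullback $\Pic(\cM_G)\to\Pic(\cH_{G,L})$ is an equivalence by the argument used for $\cX_{G,L}$), the projection formula gives
\[
R\pi_\ast\cL \;\simeq\; \cL\otimes\Sym\cP_L \;\simeq\; \bigoplus\nolimits_r \cL\otimes\Sym^r\cP_L,
\]
the homogeneous summands $\Sym^r\cP_L$ being exactly the $\bG_m$-weight spaces for the action rescaling the Higgs field. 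Since $R\Gamma(\cH_{G,L},-)\simeq R\Gamma(\cM_G,R\pi_\ast(-))$, the lemma reduces to distributing $R\Gamma(\cM_G,-)$ over this direct sum and then passing to cohomology (with $\bH^\ast$ hypercohomology, as $\cP_L$ is a genuine complex).

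The only delicate point is that interchange, because $\cM_G$ is not quasi-compact and $R\Gamma(\cM_G,-)$ need not commute with infinite direct sums in general. I would resolve it by carrying the $\bG_m$-equivariance through the whole computation. Every complex above is equivariant for the Higgs-rescaling $\bG_m$-action, so $R\Gamma(\cH_{G,L},\cL)$ is a complex of algebraic $\bG_m$-representations and therefore splits as the direct sum of its weight-$r$ isotypic summands. Because $\Sym^r\cP_L$ lies purely in weight $r$, the weight-$r$ part of $R\pi_\ast\cL$ is the single term $\cL\otimes\Sym^r\cP_L$, and $R\Gamma(\cM_G,-)$ preserves the weight grading; hence the weight-$r$ summand of $R\Gamma(\cH_{G,L},\cL)$ is $R\Gamma(\cM_G,\cL\otimes\Sym^r\cP_L)$, and summing over $r$ is the claim. (Alternatively, one may compute $R\Gamma(\cM_G,-)$ as the homotopy limit over an exhaustion of $\cM_G$ by quasi-compact open substacks $\cU_i$, on each of which $R\Gamma(\cU_i,-)$ does commute with direct sums, and then extract weight-$r$ parts, which commutes with the homotopy limit.)

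I expect the affineness of $\pi$ and the projection formula to be purely formal; the only thing that needs care is the $\bG_m$-graded bookkeeping legitimizing the infinite direct sum on the non-quasi-compact stack $\cM_G$, together with not conflating $R\Gamma$ with its cohomology groups while doing so.
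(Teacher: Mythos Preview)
Your reduction via the affine projection and the projection formula is correct, and you have correctly isolated the only nontrivial point: whether $R\Gamma(\cM_G,-)$ may be interchanged with the infinite direct sum $\bigoplus_r$ on the non-quasi-compact stack $\cM_G$. However, your proposed resolution via $\bG_m$-equivariance is circular.

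The assertion that $R\Gamma(\cH_{G,L},\cL)$, computed as a complex of vector spaces, is an \emph{algebraic} $\bG_m$-representation (i.e.\ is the direct sum of its weight eigenspaces) is exactly the statement you are trying to prove. Concretely, a section $s\in\Gamma(\cM_G,\bigoplus_r\cL\otimes\Sym^r\cP_L)$ has components $s_r\in\Gamma(\cM_G,\cL\otimes\Sym^r\cP_L)$, and on any quasi-compact open only finitely many $s_r$ restrict nonzero; but nothing prevents infinitely many $s_r$ from being globally nonzero. In that case $t\cdot s=(t^r s_r)_r$ does not lie in any finite-dimensional subrepresentation, so the action is not locally finite. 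Your alternative (2) has the same defect: the forgetful functor $\QCoh(B\bG_m)\to\Vect$ sends $(V_r)_r$ to $\bigoplus_r V_r$, which is a colimit and does not commute with the homotopy limit over the exhaustion; extracting weight pieces from the equivariant $\holim$ gives the correct $R\Gamma(\cM_G,\cL\otimes\Sym^r\cP_L)$ in each degree, but their direct sum need not recover the non-equivariant $R\Gamma$. A toy model: replace $\cM_G$ by $\coprod_{r\ge0}\Spec k$ and take $F_r=\cO_{\{r\}}$; then $\Gamma(\bigoplus_r F_r)=\prod_r k$ while $\bigoplus_r\Gamma(F_r)=\bigoplus_r k$.

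A telltale sign is that your argument never uses the hypothesis that $\cL$ has positive level. The paper's proof uses precisely this: positivity forces the cohomology of $\cL\otimes\Sym^r\cP_L$ (uniformly in $r$) to be computed already on a finite union of Atiyah--Bott strata of $\cM_G$, a quasi-compact open on which $R\Gamma$ does commute with direct sums. This is the content of the cited Lemma~4.13 of Frenkel--Gaitsgory--Teleman, with $\cL$ supplying the extra positivity needed to cover all genera.
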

\begin{proof}
In question is the commutation of cohomology with the direct sum, and the argument follows  
\cite{frenkel2013geometric}, Lemma~4.13: the cohomology is computed correctly over a finite-type part 
of the stack $\cM_G$, namely a finite union of Atiyah-Bott strata. (The line bundle $\cL$ gives the 
additional positivity needed to remove the restriction $g\ge 2$ which was needed for $\cL=\cO$ in \cite 
{frenkel2013geometric}.)
\end{proof}

We will establish a stronger result, which is local over (symmetric powers of) the curve $\Sigma$. 
Fix $r$ in Lemma~\ref{symr}. The invariant part $\sigma_*^+$ of the  direct image along the 
Galois cover $\sigma:\Sigma^r\to \mathrm{Sym}^r\Sigma$ defines a sheaf  $\cS_r:= \sigma_*^+\left[(L^\vee\otimes 
K \otimes E^*(\frg)[1])^{\boxtimes r}\right]$ over $\cM_G\times\mathrm{Sym}^r\Sigma$, offering an 
identification of sheaves over $\cM_G$
\[
\mathrm{Sym}^r\cP_L := \mathrm{Sym}^r R\pi_*(L^\vee\otimes K \otimes E^*(\frg)[1]) = R\pi_*^s\left(\cS_r\right), 
\]
where $\pi^s$ is the projection to $\cM_G$ along $\mathrm{Sym}^r\Sigma$. 
Integrating $\cS_r$ along $m: \cM_G\times \mathrm{Sym}^r\Sigma \to \mathrm{Sym}^r\Sigma$ instead gives a 
complex of sheaves $\cT_r:=Rm_*\left(\cL\otimes\cS_r \right)$. Switching the order of integrations,
\begin{equation}\label{local}
\bH^*\left(\cM_G; \cL\otimes\mathrm{Sym}^r \cP_L\right) = \bH^*\left(\cM_G;\cL\otimes 
		R\pi^s_*(\cS_r)\right) = \bH^*\left(\mathrm{Sym}^r\Sigma; \cT_r\right).
\end{equation}

The factorisation and cohomology vanishing results of \cite{teleman1998borel} imply that
\begin{enumerate}\itemsep0ex
\item $\cT_r$ is equivalent to a bounded complex of coherent sheaves over $\mathrm{Sym}^r\Sigma$: the 
lower degree is the downshift $(-r)$, while the upper bound depends on the level of $\cL$, and is 
\emph{a priori} linear in $r$. We will see that an upper bound is actually $(-1)$. 
\item The cohomology sheaves of $\cT_r$ have locally constant normal structure along the multi-diagonals 
in $\mathrm{Sym}^r\Sigma$. More precisely, near a point $\{p_1^{r_1}, \dots, p_k^{r_k}\}\in \mathrm{Sym}^r\Sigma$, 
with $p_i\in \Sigma$ pairwise distinct, we can, after choosing formal coordinates 
$z_i$ near the $p_i$, lift the independent translations in the $z_i$ to $\cT_r$, by the 
Knizhnik-Zamolodchikov-Hitchin connection.
\end{enumerate}
\noindent
Recall that the \emph{Serre dual} $\bS\cF$ of a coherent complex $\cF$ on a smooth variety $X$ is 
the complex $R\cH{om}_X\left(\cF;\cK[\dim X]\right)$, where $\cK$ is the dualising sheaf. Serre duality 
asserts that $\bH^q(X; \cF)^\vee = \bH^{-q}(X; \bS\cF)$ if $X$ is proper, otherwise the second cohomology 
must be taken with proper supports. The structure of $\cT_r$ seems difficult to spell out, but 
remarkably, its dual is much cleaner.

\begin{thm}\label{thm:sheaf}
The Serre dual $\bS\cT_r$ of $\cT_r$ is represented by a single coherent sheaf, in degree $0$.
\end{thm}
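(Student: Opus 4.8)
The plan is to compute $\bS\cT_r$ explicitly by dualising through both projections out of $\cM_G\times\mathrm{Sym}^r\Sigma$, thereby reducing the theorem to a cohomology vanishing statement on $\cM_G$ itself. First I would apply Grothendieck--Serre duality to the projection $m:\cM_G\times\mathrm{Sym}^r\Sigma\to\mathrm{Sym}^r\Sigma$. Although $\cM_G$ is not proper, the positive twist by $\cL$ makes $m$ cohomologically proper in the sense needed (the same mechanism used in the proof of \autoref{symr} and in \cite{frenkel2013geometric}: the relevant cohomology is computed over a finite union of Atiyah--Bott strata), so $\bS\cT_r\simeq Rm_*\,\bS_{\cM_G\times\mathrm{Sym}^r\Sigma}(\cL\otimes\cS_r)$.

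The key computation is the identification of this Serre dual. The basic complex $L^\vee\otimes K\otimes E^\ast(\frg)[1]$ is a vector bundle sitting in homological degree $1$, and its Serre dual relative to the curve is again a vector bundle \emph{in degree zero}: the homological shift $[1]$ is cancelled by the degree-$1$ shift in Serre duality on a curve, the twist by $K$ cancels the dualising sheaf, and the Killing form gives $\frg\simeq\frg^\vee$, leaving $L\otimes E^\ast(\frg)$. Taking $r$-fold external products on $\Sigma^r$ and then $S_r$-invariants along the finite flat Galois cover $\sigma:\Sigma^r\to\mathrm{Sym}^r\Sigma$ — Serre duality commutes with $\sigma_*^+$ since $\sigma^!\omega_{\mathrm{Sym}^r\Sigma}\simeq\omega_{\Sigma^r}$ and the dimensions agree — one finds, up to a cohomological shift,
\[
\bS_{\cM_G\times\mathrm{Sym}^r\Sigma}(\cL\otimes\cS_r)\ \simeq\ \cL^\vee\otimes\omega_{\cM_G}\otimes\cW_r,\qquad \cW_r:=\sigma_*^+\big((L\otimes E^\ast(\frg))^{\boxtimes r}\big),
\]
with $\cW_r$ locally free and flat over $\mathrm{Sym}^r\Sigma$. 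Pushing forward along $m$ and applying Serre duality on $\cM_G$ (again legitimate after the positive twist) then gives, fibrewise over $s\in\mathrm{Sym}^r\Sigma$, the clean identification $\bS\cT_r|_s\simeq R\Gamma(\cM_G;\cL\otimes\cW_{r,s}^\vee)^\vee$. Since $\cM_G$ is a classical stack there is no cohomology in negative degrees, so \autoref{thm:sheaf} reduces to the vanishing $H^{>0}(\cM_G;\cL\otimes\cW_{r,s}^\vee)=0$ for every $s$; granting this, flatness of $\cW_r$ over $\mathrm{Sym}^r\Sigma$ makes the Euler characteristic, hence $h^0$, constant in $s$, and cohomology-and-base-change identifies $\bS\cT_r$ as (the dual of) a locally free, in particular coherent, sheaf in degree $0$.

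It remains to prove the vanishing. For $s$ a sum of distinct points $p_1,\dots,p_r$ the fibre $\cW_{r,s}^\vee$ is the evaluation bundle $\bigotimes_i E_{p_i}^\ast(\frg)$, and along the multidiagonals it degenerates to the corresponding jet bundle at the collision points (a sum of Schur functors of $E_p^\ast(\frg)$ twisted by the jet algebra); in every case it is an $\cL$-admissible Atiyah--Bott class. The required vanishing for such twisted positive line bundles is the ``strong'' cohomology vanishing of \cite{fishel2008strong}, together with its incarnation over $\cM_G$ via \cite{teleman1998borel}: one proves it at the small diagonal $r\cdot p$ by the local (loop-Grassmannian) computation there, and then the factorisation structure and the flat KZ--Hitchin connection of \cite{teleman1998borel} propagate it both along and transversally to each multidiagonal, an induction on $r$ handling the strata where at least two of the points are distinct.

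I expect the main obstacle to be exactly this last step: extracting from \cite{fishel2008strong} the vanishing at the deepest diagonal $r\cdot p$ with its jet-bundle twist, and gluing it with the factorisation and KZ--Hitchin data into a statement uniform in $s$ — this is the ``rather involved'' core of the appendix. The preliminary reduction to semisimple $G$ with central twists, and the bookkeeping around the non-properness of $\cM_G$ (working over finite unions of Atiyah--Bott strata and using positivity of $\cL$, as in \autoref{symr}), are also needed but are comparatively routine.
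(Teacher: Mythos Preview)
Your route is genuinely different from the paper's. The paper does \emph{not} compute honest fibers of $\bS\cT_r$: it computes the formally completed stalk at a point $\mathbf{p}=\{p^r\}$ of the small diagonal as the dual of the local cohomology $\bH^*_{\mathbf{p}}(\mathrm{Sym}^r\Sigma;\cT_r)$. Taking local cohomology of $\cS_r$ along $\cM_G\times\{p^r\}$ replaces the finite-dimensional fiber by the infinite-dimensional $\mathrm{Sym}^r\!\big(\frg(\!(z)\!)/\frg[\![z]\!]\big)$, and it is exactly this that permits a direct appeal to Theorem~E of \cite{fishel2008strong}; factorisation from \cite{teleman1998borel} then handles the general strata. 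By contrast, your fiberwise reduction, once straightened out, needs only the elementary vanishing $H^{>0}\big(\cM_G;\cL\otimes E_p^\ast(V)\big)=0$ for finite-dimensional $V$ (this is what \cite{teleman1998borel} supplies), extended to the diagonals by the obvious jet filtration; if this goes through it even gives the stronger conclusion that $\cT_r$ is locally free in the single degree $-r$.

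Two steps in your execution need repair. First, the passage through Grothendieck duality for $m$ and an ``$\omega_{\cM_G}$'' is ill-posed: $\cM_G$ is neither proper nor of finite type, and cohomological properness supplies finiteness, not a dualising formalism for $m$. You do not need it: since $\cT_r$ is perfect on the smooth variety $\mathrm{Sym}^r\Sigma$, one has $(\bS\cT_r)|_s\simeq(\cT_r|_s)^\vee[r]$ directly, and flat base change for $m$ (valid because $\cS_r$ is locally free over $\mathrm{Sym}^r\Sigma$ and the cohomology is computed on a fixed finite-type substack as in Lemma~\ref{symr}) gives $\cT_r|_s\simeq R\Gamma\big(\cM_G;\cL\otimes(\cS_r)_s\big)$. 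Second, your invocation of \cite{fishel2008strong} at the small diagonal is a mismatch with your own setup: Theorem~E concerns the infinite-dimensional coefficient arising from local cohomology, not from fibers. What your argument actually requires at $s=r\cdot p$ is $H^{>0}\big(\cM_G;\cL\otimes V_{r\cdot p}\big)=0$, where $V_{r\cdot p}$ is the restriction of $(E^\ast\frg)^{\boxtimes r}$ to the length-$r!$ scheme $\sigma^{-1}(r\cdot p)$; filtering by $z$-degree in the coinvariant algebra $\bC[z_1,\dots,z_r]/(e_1,\dots,e_r)$ exhibits this as an iterated extension of bundles of the form $E_p^\ast(W)$, so the needed vanishing follows from \cite{teleman1998borel} and long exact sequences, without \cite{fishel2008strong}.
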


\noindent
Since $\bH^q\left(\mathrm{Sym}^r\Sigma; \cT_r\right) = \bH^{-q}\left(\mathrm{Sym}^r\Sigma; \bS\cT_r\right)^\vee$ 
vanishes for positive $q$, \autoref{thm:vanish} follows: cf.~\eqref{local}.

\begin{cor}\label{degrees}
The degrees of the cohomology sheaves of $\cT_r$ range from $(-r)$ to at most $(-1)$.
\end{cor}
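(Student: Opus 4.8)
The plan is to deduce \autoref{degrees} from \autoref{thm:sheaf} together with the two structural facts about $\cT_r$ stated just after \eqref{local}; essentially all of the genuine difficulty is already contained in \autoref{thm:sheaf}, so what is left is formal. The lower bound $\mathcal{H}^i(\cT_r)=0$ for $i<-r$ is the \emph{a priori} statement of the first of those facts: $\cS_r$ is $\sigma_*^+$ of an $r$-fold external product of a coherent sheaf placed in cohomological degree $-1$, hence is concentrated in degree $-r$, and since $m$ has no negative higher direct images, $\cT_r=Rm_*(\cL\otimes\cS_r)$ lives in degrees $\ge -r$. The content of the corollary is therefore the improved \emph{upper} bound $\mathcal{H}^i(\cT_r)=0$ for $i\ge 0$.

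For a first, crude upper bound I would invoke Serre biduality on the smooth projective variety $X:=\mathrm{Sym}^r\Sigma$, which has dimension $r$. Since $\bS\circ\bS\simeq\mathrm{id}$ on coherent complexes, \autoref{thm:sheaf} gives $\cT_r\simeq\bS\cG$ where $\cG:=\bS\cT_r$ is an ordinary coherent sheaf in degree $0$; writing $\bS\cG=R\cH{om}_X(\cG,\cK_X[r])$ one reads off $\mathcal{H}^i(\cT_r)\simeq\mathcal{E}xt^{i+r}_X(\cG,\cK_X)$, which vanishes for $i>0$ because $X$ is smooth of dimension $r$ (and for $i<-r$, reconfirming the lower bound). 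To push the upper bound down to $-1$ it remains to show $\mathcal{H}^0(\cT_r)\simeq\mathcal{E}xt^r_X(\cG,\cK_X)=0$. This $\mathcal{E}xt$ sheaf automatically has support of dimension $\le 0$, so it is enough to prove that $\cG$ has depth $\ge 1$ at every closed point of its support, i.e.\ has no nonzero finite-length subsheaf.

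That depth estimate is exactly what the second structural fact after \eqref{local} supplies. Near a point $\{p_1^{r_1},\dots,p_k^{r_k}\}\in X$ (the $p_i$ distinct) a formal neighbourhood factors as $N\times S$, where $S$ is the stratum through the point — parametrised by the $k$ points $p_1,\dots,p_k$, hence of dimension $k$, and $k\ge 1$ for every stratum of $\mathrm{Sym}^r\Sigma$ (the smallest being $\{r\cdot p\}_{p\in\Sigma}\cong\Sigma$) — while $N$ is the transverse slice, smooth of dimension $r-k$. The lift of the translations in the $z_i$ to $\cT_r$ says precisely that $\cT_r$ is constant along $S$, i.e.\ locally $\cT_r\simeq p_N^*\cN$ for a complex $\cN$ on $N$, with $p_N\colon N\times S\to N$ the projection. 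Serre duality is local and multiplicative for such products, so $\cG\simeq\bS_N(\cN)\boxtimes\cK_S[k]$; since \autoref{thm:sheaf} forces this to be a sheaf in degree $0$ and $\cK_S[k]$ is a line bundle placed in degree $-k$, we get $\bS_N(\cN)\simeq\cA[-k]$ for a coherent sheaf $\cA$ on $N$, whence $\cN\simeq\bS_N(\cA)[k]$ has cohomology only in degrees $[-(r-k)-k,\,-k]=[-r,-k]\subseteq[-r,-1]$, and equivalently $\cG\simeq\cA\boxtimes\cK_S$, which has depth $\ge k\ge 1$ at every closed point. Either reformulation gives the claimed range $[-r,-1]$ for the cohomology sheaves of $\cT_r$; moreover $-r$ is attained, since on the open stratum ($k=r$, $N$ a point) the same computation exhibits $\cT_r$ as a vector bundle sitting in degree $-r$.

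There is no real obstacle beyond \autoref{thm:sheaf}, which the argument simply quotes. The one step that must be made honest is the \emph{local product times Serre duality} passage: one should check that the Knizhnik-Zamolodchikov-Hitchin connection of the second structural fact genuinely trivialises $\cT_r$ along the strata on a formal (or étale) neighbourhood with the flatness needed to commute $\bS$ with the external product, that $Rm_*$ is compatible with the attendant base change, and that every stratum of $\mathrm{Sym}^r\Sigma$ has positive dimension — which is exactly what produces the line-bundle factor $\cK_S$ and forces the positive-depth (equivalently, degree $\le -1$) conclusion.
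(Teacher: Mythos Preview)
Your argument is correct and matches the paper's own proof in all essential respects: both deduce the range $[-r,-1]$ from Serre biduality $\cT_r\simeq\bS\bS\cT_r$, \autoref{thm:sheaf}, and the observation that the translation-invariance along strata together with the fact that every stratum of $\mathrm{Sym}^r\Sigma$ has positive dimension (the small diagonal has codimension $r-1$) caps the top $\mathcal{E}xt$ degree at $r-1$. The only difference is cosmetic: the paper packages this as a Gysin-type spectral sequence for the multi-diagonal stratification with $E_1$ terms in $R\cH om(\_,\cK)$-degrees $0$ through $r-1$, whereas you unwind the same bound as a local depth-$\ge 1$ statement for $\cG=\bS\cT_r$ at each closed point via the product decomposition $\cG\simeq\cA\boxtimes\cK_S$.
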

\begin{proof}
We have $\cT_r\cong \bS\bS\cT_r$, and we can compute the Serre dual of the sheaf $\bS\cT_r$ by using the 
Gysin sequence of the multi-diagonal stratification of $\mathrm{Sym}^r\Sigma$. The highest co-dimension is 
$(r-1)$. Because of the translation-invariance along the strata, the degrees in 
the $R\cH om(\_; \cK)$ on the $E_1$ page range from $0$ to $(r-1)$, shifting down to the claimed range. 
\end{proof}

\subsection{Preliminaries for the proof.} \label{prelim}
Given a point $p\in \Sigma$ with local coordinate $z$, we may present $\cM_G$ as 
the double-coset stack $G[\![z]\!]\backslash \bfX$, for the formal Taylor loop group $G[\![z]\!]$ 
and the \emph{thick flag variety}  $\bfX := G(\!(z)\!)/G[\Sigma^\times]$ (discussed e.g.\ in \cite
{fishel2008strong}, \S7), the quotient of the formal Laurent loop group  $G(\!(z)\!)$ by the (algebraic ind-) 
group $G[\Sigma^\times]$ of algebraic maps from the punctured curve $\Sigma^\times:= \Sigma\setminus \{p\}$ to $G$. 
For the version of $\cM_G$ twisted by $c\in Z(G)$, we need to twist the action of $G[\![z]\!]$ by a fractional 
formal loop $z^{\log c/2\pi i}$. A similar construction works for any finite set of points on $\Sigma$.

There are only finitely many isomorphism classes of irreducible integrable highest-weight representations 
$R$ of $G(\!(z)\!)$ at level opposite to that of $\cL$. Given such an $R$, let $R_0$ be its highest-energy space: 
it is an irreducible representation of $G$ and (when $G$ is connected) determines $R$.

\begin{proof}[Proof of \autoref{thm:sheaf}.]
The formally completed stalk of $\bS\cT_r$ at a general point $\mathbf{p} \in 
\mathrm{Sym}^r\Sigma$ is the vector space dual of the hyper-cohomology $\bH^*_{\mathbf{p}}\left
(\mathrm{Sym}^r\Sigma; \bS\cT_r\right)$ with support at $\mathbf{p}$. We will show that this hyper-cohomology 
is concentrated in degree zero for a point $\mathbf{p} =\{p^r\}$ on the small diagonal. For a general point 
$\mathbf{p} =\{p_1^{r_1}, \dots,  p_k^{r_k}\}$, with pairwise distinct $p_i$, the factorisation in 
\cite{teleman1998borel} expresses the answer as the fusion of the answers at the $p_i$, and the 
asserted purity will therefore hold at any $\mathbf{p}$.

Let us compute the cohomology with supports \emph{before} applying $Rm_*$: more precisely, we seek the 
local cohomology sheaf $\cH_{\cM_G\times \{p^r\}}(\cS_r)$ of $\cS_r$ with supports in 
$\cM_G\times\{p^r\}\subset \cM_G\times\mathrm{Sym}^r\Sigma$. We can project the formal neighborhood back 
to $\cM_G$ along $\pi^s$. Serre duality along this projection shows that cohomology with supports 
replaces $\pi^s_*(\cS_r)$ with 
\[
\mathrm{Sym}^r\left(\hat{L}_p\otimes_{\bC[\![z]\!]} \frg(\!(z)\!)/\frg[\![z]\!]\right) \cong 
\mathrm{Sym}^r\left(\frg(\!(z)\!)/\frg[\![z]\!]\right), 
\]
made into a bundle over $\cM_G$ via the adjoint action of $G[\![z]\!]$ in the double-coset presentation.\footnote 
{The easiest way to see the duality is to pass to $\Sigma^r$ and take symmetric group invariants.} 
That is a colimit of finite-dimensional representations, and their cohomologies are computed correctly on a finite-type 
part of the stack $\cM_G$; cohomology therefore commutes with the colimit, and for the finite bundles we can 
apply the factorisation methods of \cite{teleman1998borel} to obtain 
\[
\begin{split}
\bH^*\left(\cM_G\times\Sym^r\Sigma; \cL\otimes \cH_{\cM_G\times\{p^r\}}(\cS_r)\right) \cong \\
	\bigoplus\nolimits_RH^0&\left(\cM_G; E^*_p(R_0^\vee)\otimes\cL\right)\otimes H^*_{G[\![z]\!]}\left(R\otimes R\Gamma_p(\cS_r)\right),
\end{split}
\]
having used the higher cohomology vanishing for the first factor on the right side \cite{teleman1998borel}.
The proof is concluded by appealing to the result below.

A small amendment is needed for the $c$-twisted version of $\cM_G$: the center $Z(G)$ acts on the set 
of representations $R$, through the outer automorphism $z^{\log c/2\pi i}$, and so $R$ in the factorisation 
above must be replaced by its transform $c(R)$.
\end{proof}

\begin{thm}[\cite{fishel2008strong}, Theorem~E]\label{bigmac}
For any highest-weight representation $R$ of $G(\!(z)\!)$ of strictly negative level, the following 
algebraic group cohomology vanishes in positive degrees:
\[
H^{>0}_{G[\![z]\!]}\left(R\otimes \mathrm{Sym}(\frg(\!(z)\!)/\frg[\![z]\!])\right) = 0. \qed
\]
\end{thm}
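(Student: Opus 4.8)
The plan is to geometrize the group‑cohomology computation and then deduce it from a Kodaira/Grauert--Riemenschneider‑type vanishing statement on the loop Grassmannian. Write $\mathrm{Gr}_G = G(\!(z)\!)/G[\![z]\!]$. Because $R$ extends to a representation of (the central extension of) $G(\!(z)\!)$, the $G[\![z]\!]$‑module $R\otimes\Sym(\frg(\!(z)\!)/\frg[\![z]\!])$ is the fibre at the base point of a $G(\!(z)\!)$‑equivariant quasi‑coherent sheaf on $\mathrm{Gr}_G$, namely $\widetilde R\otimes\Sym(T_{\mathrm{Gr}_G})$, where $T_{\mathrm{Gr}_G}$ is the tangent sheaf (its fibre at the base point being $\frg(\!(z)\!)/\frg[\![z]\!]$) and $\widetilde R$ is the sheaf associated to $R|_{G[\![z]\!]}$. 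Since the quotient stack $[\mathrm{Gr}_G/G(\!(z)\!)]$ is $BG[\![z]\!]$, Shapiro's lemma (descent) identifies
\[
H^{*}_{G[\![z]\!]}\!\left(R\otimes\Sym(\frg(\!(z)\!)/\frg[\![z]\!])\right)\;\cong\;H^{*}\!\left(\mathrm{Gr}_G;\ \widetilde R\otimes\Sym(T_{\mathrm{Gr}_G})\right),
\]
and since $\Sym(T_{\mathrm{Gr}_G}) = \pi_{*}\mathcal O_{T^{*}\mathrm{Gr}_G}$ for the bundle projection $\pi\colon T^{*}\mathrm{Gr}_G\to\mathrm{Gr}_G$, this is the cohomology of $\pi^{*}\widetilde R$ on the cotangent bundle $T^{*}\mathrm{Gr}_G$. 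It therefore suffices to show that this cohomology is concentrated in degree $0$.

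Next I would cut the problem down to finite type and to one positivity input. The ind‑scheme $\mathrm{Gr}_G$ is exhausted by $G[\![z]\!]$‑invariant projective Schubert varieties, on each of which the $G[\![z]\!]$‑equivariant sheaves at hand are filtered colimits of coherent ones (through the energy filtration, whose graded pieces are finite‑dimensional), so cohomology is controlled there; and $T^{*}\mathrm{Gr}_G$ carries, as in the finite‑dimensional Grothendieck--Springer picture for $T^{*}(G/P)$, a proper contracting $\bG_m$‑action onto the zero section and a map to an affine target with rational singularities (a loop analogue of the image of the Springer resolution). The computation then reduces to showing that $\widetilde R\otimes\pi_{*}\mathcal O_{T^{*}\mathrm{Gr}_G}$ has no higher cohomology, which follows from a Borel--Weil--Bott‑type vanishing for $\widetilde R$ on $\mathrm{Gr}_G$ that --- and this is where the hypothesis enters --- persists after tensoring with $\Sym(T_{\mathrm{Gr}_G})$ exactly because the level of $R$ is strictly negative (at non‑negative level one genuinely picks up higher cohomology). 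On each Schubert variety this is a Kempf/Grauert--Riemenschneider‑type statement: the negative level makes $\widetilde R$ a positive/ample factor (it is a twist by the ample bundle $\mathcal O_{\mathrm{Gr}_G}(k)$, $k>0$), while $T_{\mathrm{Gr}_G}$ is globally generated as a homogeneous bundle, so its symmetric powers only add positivity; and Schubert varieties have rational singularities via Bott--Samelson resolutions. One then checks that the inverse limits over the exhaustion and the direct sums over symmetric powers create no new higher cohomology, exactly as in the proof of \autoref{symr}, the positivity just established supplying the uniform control.

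The real obstacle is making all of this rigorous in the infinite‑dimensional setting: $T^{*}\mathrm{Gr}_G$ is not of finite type, the restriction of $T_{\mathrm{Gr}_G}$ to a Schubert variety is not that variety's tangent sheaf, the energy filtration of $\Sym(T_{\mathrm{Gr}_G})$ interacts with the Schubert exhaustion only asymptotically, and passing to graded pieces does not by itself suffice (the trivial summand of $\widetilde R$ already contributes $\Sym(T_{\mathrm{Gr}_G})$, so the negativity of the level must be exploited globally, not term‑by‑term). The clean way to do this, carried out in \cite{fishel2008strong}, is Hodge‑theoretic: one represents the cohomology by a Dolbeault‑type complex on $\mathrm{Gr}_G$, writes down a Bochner--Kodaira curvature identity for the associated Laplacian twisted by the positive line bundle recording the level and by the cotangent fibre coordinates, and shows the curvature operator is strictly positive in every positive cohomological degree, so that harmonic representatives --- hence the cohomology itself --- must vanish there. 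An alternative, purely algebraic route runs the Hochschild--Serre spectral sequence for the congruence subgroup $G[\![z]\!]_{+}\subset G[\![z]\!]$ (with quotient $G$), reducing to the continuous Lie‑algebra cohomology $H^{*}_{\mathrm{cts}}\!\left(z\frg[\![z]\!];\,R\otimes\Sym(\frg(\!(z)\!)/\frg[\![z]\!])\right)$, which one attacks by Garland/Kostant‑type Laplacian computations exploiting the semi‑infinite structure of $R$ at negative level; in either guise the decisive input --- and the place where the hypothesis is used --- is the strict positivity of a Laplacian in positive degree forced by the negativity of the level.
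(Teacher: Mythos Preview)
The paper does not prove this statement: it is quoted as Theorem~E of \cite{fishel2008strong} and used as a black box (note the \qed\ immediately after the display). There is thus no argument in the paper to compare your proposal against; you are attempting more than the paper does.

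Your final paragraph correctly identifies the method of \cite{fishel2008strong}: a Nakano/Bochner--Kodaira identity for a Dolbeault Laplacian on the loop Grassmannian (equivalently, after the Hochschild--Serre reduction you describe, a Laplacian computation in the continuous cohomology of $z\frg[\![z]\!]$), with the strict negativity of the level supplying strict positivity of the curvature operator in every positive degree. In that sense you and the paper are pointing to the same external input.

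The preceding geometric heuristic, however, has genuine gaps even as a sketch. The descent identification $[\mathrm{Gr}_G/G(\!(z)\!)]\simeq BG[\![z]\!]$ yields
\[
H^*_{G[\![z]\!]}(M)\;\cong\;H^*_{G(\!(z)\!)}\bigl(\mathrm{Gr}_G;\widetilde M\bigr),
\]
i.e.\ \emph{$G(\!(z)\!)$-equivariant} cohomology on $\mathrm{Gr}_G$, not the non-equivariant $H^*(\mathrm{Gr}_G;\widetilde M)$ you wrote; dropping the equivariance is not free, since $G(\!(z)\!)$ is neither reductive nor pro-unipotent. Second, $\widetilde R$ is not ``a twist by the ample bundle $\cO_{\mathrm{Gr}_G}(k)$'': $R$ is infinite-dimensional and $\widetilde R$ is a pro-coherent sheaf, not a line bundle --- only the \emph{level} of $R$ matches $c_1(\cO(k))$, which is far weaker than what a Kempf/Grauert--Riemenschneider argument on Schubert varieties would need. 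You already flag the heuristic as non-rigorous and defer to \cite{fishel2008strong}, which is honest; but as written the geometric picture would not lead to a proof without essentially reproducing the Hodge-theoretic work carried out there.
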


We will now give a second proof of \autoref{thm:vanish}, which tracks part of the computation in 
$H^*(\cH_{G,L};\cO)$ from \cite{frenkel2013geometric}, Theorem~4.2, because it passes through a 
vanishing result of independent interest, Proposition~\ref{subdiagonal} below. 
To study $H^*\left(\cM_G; \mathrm{Sym}\,T\otimes\cL\right)$, we build a vector bundle over $\bfX$ from $\frg[\Sigma^\times] := \Gamma(\Sigma^\times;\frg)$ using the adjoint action.\footnote{Use $\Gamma(\Sigma^\times;\frg\otimes L^\vee\otimes K)$ instead,  for general $L$.} Recall the 
following results.

\begin{lem} [using \cite{teleman1998borel} \S8]\label{highestweight} {\ }
\begin{enumerate}\itemsep0ex
\item For all $q$, $H^q\left(\bfX; \cL\otimes{\textstyle\bigwedge}^s\frg[\Sigma^\times]\right)$ is a sum of 
integrable highest-weight representations of $G(\!(z)\!)$ of level opposite\footnote{Our parametrisation of the 
formal disk at $p$ flips the sign of the level and energy.} to that of $\cL$. 
\item The co-factor of $R$ as in \S\S\ref{prelim} is 
$H^q\left(\cM_G; E^*_p(R_0^\vee)\otimes\cL\otimes{\textstyle\bigwedge}^s\frg[\Sigma^\times]\right)$. 
 \qed
\end{enumerate}
\end{lem}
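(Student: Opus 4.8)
The plan is to derive both parts from the Borel--Weil--Bott machinery of \cite[\S8]{teleman1998borel}, keeping track of the extra twist by $\bigwedge^s\frg[\Sigma^\times]$. Recall that $\bigwedge^s\frg[\Sigma^\times]$ is the $G(\!(z)\!)$-equivariant sheaf on $\bfX = G(\!(z)\!)/G[\Sigma^\times]$ attached to the adjoint action of the stabilizer $G[\Sigma^\times]$ on $\bigwedge^s\Gamma(\Sigma^\times;\frg)$, that $\cL$ is $G(\!(z)\!)$-equivariant, and that $\cM_G = G[\![z]\!]\backslash\bfX$.

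For (1), I would first observe that, since $\cL\otimes\bigwedge^s\frg[\Sigma^\times]$ is $G(\!(z)\!)$-equivariant, each $H^q(\bfX;\cL\otimes\bigwedge^s\frg[\Sigma^\times])$ is a smooth representation of $G(\!(z)\!)$ on which the central $\bG_m$ of the canonical extension acts at the level opposite to that of $\cL$, this level being visible already in $\Pic(\bfX)$. The substance is then to see that this representation lies in the category of integrable highest-weight modules. For this I would filter $\bigwedge^s\frg[\Sigma^\times]$ appropriately and apply the admissibility and higher-cohomology-vanishing estimates of \cite[\S8]{teleman1998borel}, valid for a \emph{positive} $\cL$, to the pieces; these bound the energy below and keep the energy eigenspaces finite-dimensional, which forces $H^q$ to be a (possibly infinite) direct sum of integrable highest-weight modules. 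Since at a fixed level there are only finitely many such modules up to isomorphism, each $R$ appears with a well-defined multiplicity complex --- its \emph{co-factor}. Positivity of $\cL$ is exactly what buys this; when $\cL=\cO$ the analogue requires $g\ge2$, and $\cL$ supplies the missing positivity (cf.\ the remark following \autoref{symr}).

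For (2), I would compute that co-factor by descending from $\bfX$ to $\cM_G = G[\![z]\!]\backslash\bfX$. The sheaf $E^*_p(R_0^\vee)$ on $\cM_G$ pulls back to the constant sheaf $R_0^\vee\otimes\cO_{\bfX}$ carrying the $G[\![z]\!]$-equivariant structure coming from evaluation $G[\![z]\!]\to G$ at $z=0$, so equivariant descent gives
\[
R\Gamma\!\left(\cM_G;\;E^*_p(R_0^\vee)\otimes\cL\otimes{\textstyle\bigwedge}^s\frg[\Sigma^\times]\right)\;\simeq\;R\Gamma_{G[\![z]\!]}\!\left(R_0^\vee\otimes R\Gamma\!\left(\bfX;\cL\otimes{\textstyle\bigwedge}^s\frg[\Sigma^\times]\right)\right).
\]
Plugging in the decomposition $R\Gamma(\bfX;\cL\otimes\bigwedge^s\frg[\Sigma^\times])\simeq\bigoplus_R R\otimes(\text{co-factor of }R)$ from (1) and pulling the multiplicity complexes out of the $G[\![z]\!]$-cohomology, everything reduces to the fact that $R\Gamma_{G[\![z]\!]}(R_0^\vee\otimes R)$ is $\bC$ in degree $0$ exactly when $R_0$ is the highest-energy space of $R$ and vanishes otherwise --- the degree-$0$ part being $\Hom_{G[\![z]\!]}(R_0,R)=\Hom_G(R_0,R_0)$, and the higher vanishing being the negative-level statement of \cite[\S8]{teleman1998borel}, a sibling of \autoref{bigmac} and the results of \cite{fishel2008strong}. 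For the central twists of $\cM_G$ the same computation runs with the set of admissible $R$ permuted by the outer automorphism $z^{\log c/2\pi i}$, as in the proof of \autoref{thm:sheaf}.

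The main obstacle I anticipate is the integrability assertion inside (1): making sure that twisting the positive line bundle by the infinite-rank bundle $\bigwedge^s\frg[\Sigma^\times]$ does not push its cohomology out of the category of integrable highest-weight modules (equivalently, that the co-factors are honest bounded complexes of vector spaces). This is precisely the point at which the positivity of $\cL$ must be used, and carrying it out is a matter of transplanting the admissibility estimates of \cite[\S8]{teleman1998borel} into the present twisted setting.
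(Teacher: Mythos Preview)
The paper does not give its own proof of this lemma: it is recorded as a direct consequence of \cite[\S8]{teleman1998borel} and closed with a \qed. Your proposal is a correct and reasonable unpacking of that citation --- the equivariance/admissibility argument for (1) and the descent plus Frobenius reciprocity argument for (2) are exactly the mechanism behind the results of \cite{teleman1998borel}, and you have correctly flagged that the positivity of $\cL$ is what makes the integrability in (1) go through. One small remark on (2): to match multiplicities degree-by-degree as the lemma asserts, you need not only $H^0_{G[\![z]\!]}(R_0^\vee\otimes R')=\delta_{R,R'}\,\bC$ but also the vanishing of $H^{>0}_{G[\![z]\!]}(R_0^\vee\otimes R')$ for \emph{every} integrable $R'$, so that the hypercohomology spectral sequence degenerates; this is indeed available from \cite{fishel2008strong} (and is slightly more than the $\mathrm{Sym}^0$ case of \autoref{bigmac}), but it is worth making the dependence explicit.
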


\autoref{thm:sheaf}  lets us say more:

\begin{prop}\label{subdiagonal}
The cohomologies in Lemma~\ref{highestweight} vanish if $q\ge s$, save when $q=s=0$.
\end{prop}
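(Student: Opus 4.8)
The plan is to reduce to \autoref{thm:sheaf} via a deformation of the auxiliary line bundle $L$. First, by \autoref{highestweight}(2) the assertion is equivalent to the vanishing, for every integrable representation $R$ of $G(\!(z)\!)$ of level opposite to $\cL$, of the co-factor $\bH^q(\cM_G;E^*_p(R_0^\vee)\otimes\cL\otimes{\textstyle\bigwedge}^s\frg[\Sigma^\times])$ whenever $q\ge s$, with the single exception $q=s=0$. The key observation is that $\frg[\Sigma^\times]=\Gamma(\Sigma^\times;\op{ad}E)=\operatorname*{colim}_n\Gamma(\Sigma;\op{ad}E(np))$, and that over any fixed finite union of Atiyah--Bott strata one has $R^1\pi_*(\op{ad}E(np))=0$ for $n\gg0$, so that there $\Gamma(\Sigma;\op{ad}E(np))[1]\simeq\cP_{K(-np)}$ and hence ${\textstyle\bigwedge}^s\frg[\Sigma^\times][s]\simeq\operatorname*{colim}_n\Sym^s\cP_{K(-np)}$ (the $\cL$ playing no role in the twist of $L$). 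Since the cohomology in question is computed on such finite-type pieces (\autoref{symr} and \cite{frenkel2013geometric}, Lemma~4.13), this gives
\[
\bH^q\!\left(\cM_G; E^*_p(R_0^\vee)\otimes\cL\otimes{\textstyle\bigwedge}^s\frg[\Sigma^\times]\right)\;\cong\;\operatorname*{colim}_n\ \bH^{q-s}\!\left(\cM_G; E^*_p(R_0^\vee)\otimes\cL\otimes\Sym^s\cP_{K(-np)}\right).
\]

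For each fixed $n$, the right-hand group is, by the co-factor form of \eqref{local} used in the proof of \autoref{thm:sheaf}, the hypercohomology $\bH^{q-s}(\Sym^s\Sigma;\cT^{(n)}_{s,R})$ of the corresponding $R$-isotypic summand of $\cT_s$ for the line bundle $L=K(-np)$; here one uses that the twist $E^*_p(R_0^\vee)$ is supported over $p$ and is carried along inside the duality argument of that proof, so that $\bS\cT^{(n)}_{s,R}$ is again a single coherent sheaf in degree $0$. Since $\Sym^s\Sigma$ is smooth and \emph{proper}, Serre duality then gives $\bH^{j}(\Sym^s\Sigma;\cT^{(n)}_{s,R})=\bH^{-j}(\Sym^s\Sigma;\bS\cT^{(n)}_{s,R})^\vee=0$ for all $j>0$. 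Taking $j=q-s$ and passing to the colimit, this already disposes of every case $q\ge s+1$ — which, incidentally, is also the induction-free route to \autoref{thm:vanish} itself once one sums over $s$.

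The remaining case $q=s\ge1$ is the genuine obstacle. Here the above identification reads
\[
\bH^s\!\left(\cM_G; E^*_p(R_0^\vee)\otimes\cL\otimes{\textstyle\bigwedge}^s\frg[\Sigma^\times]\right)\;\cong\;\operatorname*{colim}_n\bH^0(\Sym^s\Sigma;\cT^{(n)}_{s,R})\;\cong\;\Bigl(\varprojlim_n H^0(\Sym^s\Sigma;\bS\cT^{(n)}_{s,R})\Bigr)^{\!\vee},
\]
so one must show the inverse limit of these section spaces vanishes. The point is that the $n$-dependence is completely localized along the divisor $\Delta_p\subset\Sym^s\Sigma$ of configurations containing $p$: since $(\op{ad}E(np))^{\boxtimes s}=(\op{ad}E)^{\boxtimes s}\otimes\mathcal O(np)^{\boxtimes s}$ and $\sigma^+_*\mathcal O(np)^{\boxtimes s}\cong\mathcal O_{\Sym^s\Sigma}(n\Delta_p)$, the projection formula yields $\cT^{(n)}_{s,R}\cong\cT^{(0)}_{s,R}\otimes\mathcal O(n\Delta_p)$, hence $\bS\cT^{(n)}_{s,R}\cong\bS\cT^{(0)}_{s,R}\otimes\mathcal O(-n\Delta_p)$, and $\varprojlim_n H^0(\Sym^s\Sigma;\bS\cT^{(0)}_{s,R}(-n\Delta_p))$ consists of sections vanishing to infinite order along $\Delta_p$. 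That this forces the sections to be zero is where the factorisation and locally-constant-normal-structure results of \cite{teleman1998borel} (the KZ--Hitchin connection, and the sharpened degree range $[-s,-1]$ of \autoref{degrees}, which says $\bS\cT^{(0)}_{s,R}$ has no subsheaf supported along the deepest multi-diagonals) must be invoked to show $\bS\cT^{(0)}_{s,R}$ has no nonzero section supported on a formal neighbourhood of $\Delta_p$. Everything else — the co-factor reduction, the deformation $K\rightsquigarrow K(-np)$, and the $q\ge s+1$ vanishing — is formal manipulation built on \autoref{thm:sheaf}; the substantive work is pinning down precisely this behaviour of $\cT_s$ along $\Delta_p$.
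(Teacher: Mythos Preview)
Your colimit identification $\frg[\Sigma^\times]=\operatorname*{colim}_n\Gamma(\Sigma;\op{ad}E(np))$ and the projection-formula computation $\cT^{(n)}_{s}\cong\cT^{(0)}_{s}\otimes\cO(n\Delta_p)$ are correct, and the case $q\ge s+1$ goes through as you say. The gap is in the case $q=s\ge1$. You reduce correctly to showing that $\varprojlim_n H^0\bigl(\mathrm{Sym}^s\Sigma;\bS\cT^{(0)}_{s,R}(-n\Delta_p)\bigr)=0$, but your proposed justification --- invoking the KZ--Hitchin connection together with the degree range of \autoref{degrees} --- is not an argument. A nonzero element of that inverse limit is a global section of the coherent sheaf $\bS\cT^{(0)}_{s,R}$ whose support is disjoint from $\Delta_p$; the \emph{formal} translation-invariance along the multi-diagonals constrains the local normal structure of the sheaf but does not by itself preclude such globally supported sections, and nothing in \autoref{degrees} addresses this either. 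You flag this yourself as ``the substantive work,'' which is to say it is not done.

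The paper closes this gap by a one-line observation already implicit in your own setup. Since $\operatorname*{colim}_n\cO(n\Delta_p)\cong j_*\cO_{\mathrm{Sym}^s\Sigma^\times}$ for the open inclusion $j:\mathrm{Sym}^s\Sigma^\times\hookrightarrow\mathrm{Sym}^s\Sigma$, your colimit is simply
\[
\operatorname*{colim}_n\,\bH^{q-s}\!\bigl(\mathrm{Sym}^s\Sigma;\cT^{(0)}_{s}(n\Delta_p)\bigr)\;\cong\;\bH^{q-s}\!\bigl(\mathrm{Sym}^s\Sigma^\times;\cT^{(0)}_{s}\bigr).
\]
Now $\Sigma^\times$ is affine, hence so is $\mathrm{Sym}^s\Sigma^\times$; and by \autoref{degrees} the cohomology sheaves of $\cT^{(0)}_{s}$ sit in degrees $[-s,-1]$ for $s\ge1$. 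Over an affine scheme hypercohomology collapses to global sections of the cohomology sheaves, so $\bH^{q-s}(\mathrm{Sym}^s\Sigma^\times;\cT^{(0)}_s)=0$ for every $q-s\ge0$, handling $q=s$ and $q>s$ simultaneously. This is precisely the paper's proof: push $E^*(\frg)$ to the symmetric power and integrate over $\cM_G$ to get $\cT_s[-s]$ in degrees $0$ to $s-1$, then use that $\Sigma^\times$ is affine. There is no need for the deformation $L\rightsquigarrow K(-np)$, for Serre duality, or for the inverse-limit analysis at all.
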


\begin{proof}
We use the same method of pushing forward $E^*(\frg)$ to the symmetric power of $\Sigma$ and integrating 
over $\cM_G$ first; this produces the complex $\cT_s[-s]$, in degrees ranging from $0$ to $s-1$. Because 
$\Sigma^\times$ is affine, no further cohomology appears.  
\end{proof}

\begin{proof}[Second proof of \autoref{thm:vanish}]
To keep notation simple, assume that $L=K$; the argument is not changed by passing to a general $L$. 
Resolve the tangent complex $T=\cP_K$ of $\cM_G$ as
\[
\frg[\Sigma^\times] \xrightarrow{\partial=\mathrm{Ad}_\phi} \frg(\!(z)\!)/\frg[\![z]\!]
\]
with differential, at the point $\phi\in G(\!(z)\!)$, equal to the adjoint twist of the obvious inclusion. 
We seek the vanishing of positive degree hypercohomologies
\begin{equation}\label{hypercoh}
H^q\left(\cM_G;\mathrm{Sym}^r\, T \otimes\cL \right) = \bH^q_{G[\![z]\!]}\left(\mathbf{X};
    \cL\otimes\left\{\bigoplus_{s+t=r}{\textstyle\bigwedge}^s\frg[\Sigma^\times] \otimes \mathrm{Sym}^t(\frg(\!(z)\!)
    /\frg[\![z]\!]),\:\partial\right\} \right),
\end{equation}
with the generating bundles $\frg[\Sigma^\times]$ and $\frg(\!(z)\!)/\frg[\![z]\!]$ placed in cohomological degrees 
$-1$, respectively $0$, and differential induced from $\partial$ above. The $\mathrm{Sym}$-factors carry the 
adjoint action of $G[\![z]\!]$, the variety $\bfX$ carries the natural translation action. The (finite length) 
filtration by $s$-degree gives a convergent spectral sequence $E_1^{-s,q}\Rightarrow \bH^{q-s}$ with first page 
obtained by fixing $s$ and ignoring the differential $\partial$ above. Lemma~\ref{highestweight} gives the 
following factorisation, with $R$ ranging over irreducible integrable loop group representations: 
\[
E_1^{-s,q}\cong \bigoplus_{u;R} H^{q-u}_{G[\![z]\!]}\left(R\otimes \mathrm{Sym}^{r-s}(\frg(\!(z)\!)/\frg[\![z]\!])\right) 
	\otimes H^u\left(\cM_G; E_p^*(R_0^\vee)\otimes\cL\otimes{\textstyle\bigwedge}^s\frg[\Sigma^\times]\right)
\]
Now, Theorem~\ref{bigmac} tells us that $u=q$, and Corollary~\ref{degrees} ensures that $q=u\le s$. 
So the $E_1$ page contains no positive-degree cohomology.
\end{proof}

\bibliography{verlinde_references}
\bibliographystyle{plain}

\end{document}